\theoremstyle{plain}
\newtheorem{theorem0}{Theorem}
\newtheorem{theorem}{Theorem}[section]
\newtheorem{prop}[theorem]{Proposition}
\newtheorem{cor}[theorem]{Corollary}
\newtheorem{lemma}[theorem]{Lemma}
\theoremstyle{definition}
\newtheorem{defi}[theorem]{Definition}
\newtheorem{ex}[theorem]{Example}
\newtheorem{red}[theorem]{Reduction}
\newtheorem{step}{Step}
\newtheorem{remdef}[theorem]{Remark-Definition}
\theoremstyle{remark}
\newtheorem{remark}[theorem]{Remark}
\def\ZZ{\mathbb{Z}}
\def\RR{\mathbb{R}}
\def\QQ{\mathbb{Q}}
\def\CC{\mathbb{C}}
\def\PP{\mathbb{P}}
\def\cO{\mathcal{O}}
\def\b{\ell}
\def\sv{c_D}
\def\sf{f}
\def\l{l}
\def\t{\tau}
\def\m{\mu}
\def\Hv{\mathbb{H}_v}
\def\Hh{\mathbb{H}_h}
\def\Hm{\mathbb{H}_m}
\def\p{{w_x}}
\def\q{{w_y}}
\def\r{{w_z}}
\def\h{\varphi_D}
\def\Si{S}
\def\s{\sigma}
\newcommand{\floor}[1]{\left \lfloor #1 \right \rfloor}
\newcommand{\ai}[1]{A_{i,(#1)}}
\newcommand{\fd}{F_{(d)}}
\newcommand{\dcover}{S_d}
\newcommand{\orb}{\mathcal{O}}
\newcommand{\vdeg}{\kappa}
\newcommand{\pmult}{\nu}
\newcommand{\newmult}{m_\omega}
\newcommand{\mk}{\delta}
\newcommand{\xp}{e}
\newcommand{\mq}{m}
\newcommand{\mdet}{c}
\newcommand{\mcd}{\delta_\omega}
\newcommand{\bzt}{\beta}
\newcommand{\mcdhv}{\nu}
\DeclareMathOperator{\mult}{mult}
\DeclareMathOperator*{\lcm}{lcm}
\DeclareMathOperator{\Div}{Div}
\DeclareMathOperator{\tor}{Tor}
\DeclareMathOperator{\Weil}{Weil}
\DeclareMathOperator{\Cart}{Cart}
\DeclareMathOperator{\ord}{ord}
\DeclareMathOperator{\exc}{Exc}
\DeclareMathOperator{\sing}{Sing}
\DeclareMathOperator{\Proj}{Proj}
\DeclareMathOperator{\Jac}{Jac}
\DeclareMathOperator{\cl}{Cl}
\let\underbrace\LaTeXunderbrace
\let\overbrace\LaTeXoverbrace
\title[Cyclic coverings of rational normal surfaces]
{Cyclic coverings of rational normal surfaces which are quotients of a product of curves}
\author[E.~Artal]{Enrique Artal Bartolo}
\address[E.~Artal Bartolo]{Departamento de Matem\'{a}ticas, IUMA \\
Universidad de Zaragoza \\
C.~Pedro Cerbuna 12, 50009, Zaragoza, Spain}
\urladdr{\url{http://riemann.unizar.es/~artal}}
\email{\href{mailto:artal@unizar.es}{artal@unizar.es}} 
\author[J.I.~Cogolludo]{Jos{\'e} Ignacio Cogolludo-Agust{\'i}n}
\address[J.I.~Cogolludo-Agustín]{Departamento de Matem\'aticas, IUMA \\ 
Universidad de Za\-ra\-go\-za \\ 
C.~Pedro Cerbuna 12 \\ 
50009 Zaragoza, Spain} 
\urladdr{\url{http://riemann.unizar.es/~jicogo}}
\email{\href{mailto:jicogo@unizar.es}{jicogo@unizar.es}} 
\author[J.~Mart\'{\i}n-Morales]{Jorge Mart\'{\i}n-Morales}
\address[J.~Martín-Morales]{Departamento de Matem\'aticas, IUMA \\ 
Universidad de Za\-ra\-go\-za \\ 
C.~Pedro Cerbuna 12 \\ 
50009 Zaragoza, Spain} 
\urladdr{\url{http://riemann.unizar.es/~jorge}}
\email{\href{mailto:jorge@unizar.es}{jorge@unizar.es}}
\subjclass[2020]{14J26, 14E20, 57M12}  
\keywords{Normal surfaces, cyclic coverings, Alexander polynomial, monodromy, 
isotrivial fibered surfaces, L{\^e}-Yomdin singularities.}
\thanks{\noindent 
Partially supported by MCIN/AEI/10.13039/501100011033 (grant code: PID2020-114750GB-C31)
and by Departamento de Ciencia, Universidad y Sociedad del Conocimiento del Gobierno de Arag{\'o}n 
(grant code: E22\_20R: ``{\'A}lgebra y Geometr{\'i}a'').
The third named author is supported by MCIN/AEI/10.13039/501100011033 and the European Union 
\emph{NextGenerationEU/PRTR} (grant code: RYC2021-034300-I) and also by Junta de Andaluc{\'\i}a (FQM-333).}
\newcounter{relctr}
\everydisplay\expandafter{\the\everydisplay\setcounter{relctr}{0}}
\begin{document}

\begin{abstract}
This paper deals with cyclic covers of a large family of rational normal surfaces that can also 
be described as quotients of a product, where the factors are cyclic covers of algebraic curves.
We use a generalization of Esnault-Viehweg method to show that the action of the monodromy on 
the first Betti group of the covering (and its Hodge structure) splits as a direct sum of the 
same data for some specific cyclic covers over~$\PP^1$.

This has applications to the study of L{\^e}-Yomdin surface singularities, in particular to the 
action of the monodromy on the Mixed Hodge Structure, as well as to isotrivial fibered surfaces.
\end{abstract}

\maketitle
\tableofcontents

\section*{Introduction}

The general framework of this paper is the study of the cohomology of cyclic covers of normal projective surfaces
ramified along a curve, i.e.~a Weil divisor. 

In the smooth case, for 
instance the projective plane, cyclic coverings of $\PP^2$ ramified along a curve have been intensively studied since 
Zariski~\cite{Zariski-problem,Zariski-irregularity}. The first approaches used the degree of the Alexander polynomial of the 
complement of the ramification locus to calculate the irregularity of the covering ramified along the curve with order the 
degree of the curve. In the 80's a series of papers 
(\cite{Esnault-Viehweg82,es:82,Libgober-alexander,Sabbah-Alexander,Loeser-Vaquie-Alexander,Artal94})
allowed for a computation of the irregularity of the covering that was independent of its fundamental group.

The problem of computing the irregularity and structure of cyclic (or more generally, abelian) covers 
is relevant in its own right, e.g. in~\cite{ap:12,pardini} for the non-cyclic case, with a focus on their global structure (including their singularities whether the base is smooth or singular). Note that
the first Betti numbers (i.e twice the irregularity) of arbitrary finite abelian coverings can
be retrieved from those of finite cyclic covers.

Our main motivation for this work, however, stems from the study of surface singularities.
An important invariant of a singular surface in $\mathbb{C}^3$ is the mixed Hodge structure of the 
cohomology of its Milnor fiber~\cite{Deligne71a, Deligne71b, Deligne74}. In the isolated case this 
structure can be described by using Steenbrink spectral sequence~\cite{Steenbrink77} associated with 
the semistable reduction of a resolution of the singularity~\cite{Mumford-topology}.
A crucial ingredient to understand this spectral sequence is the fact that the restriction of the semistable reduction to each of the exceptional divisors of the resolution is a cyclic branched covering ramified on a normal crossing
divisor in a smooth surface. Esnault-Viehweg's theory~\cite{Esnault-Viehweg82} can be used to compute the equivariant
first Betti numbers of these coverings (by the Hodge symmetries only these are needed).

In practice, however, either the embedded resolutions are too difficult to compute or their structure is too complicated,
and only a few explicit examples are known in the literature. 

The first named author computed in~\cite{ea:mams} the Hodge structure of superisolated surface singularities~\cite{Luengo87},
providing a counterexample to Yau's conjecture~\cite{Yau}. 

In order to simplify this combinatorial problem, the third named author used in~\cite{jorge-Semistable} embedded 
$\mathbb{Q}$-resolutions instead which are less complicated.
For these resolutions, one needs to deal with cyclic branched coverings of surfaces with quotient singularities.
This motivated us to develop a generalization of Esnault-Viehweg's theory to this setting~\cite{ACM19}.
An explicit embedded $\mathbb{Q}$-resolution is constructed for L{\^e}-Yomdin surface singularities in~\cite{jorge-Israel}. 
In~\cite{jorge:Yomdin}, it is shown that only cyclic covers ramified along $\mathbb{Q}$-normal crossing divisors are required 
both for weighted projective planes and for the type of surfaces studied in this paper.

The construction of the surfaces studied in this work involves three cyclic branched covers:
\[
\begin{tikzcd}[row sep=-5pt]
\PP^1\rar["m_\vdeg"]&\PP^1,&G\rar["\tau", "\vdeg:1" below]&\PP^1,&\fd\rar["\pi_F", "d:1" below]&\PP^1. \\
z\rar[mapsto]&z^\vdeg
\end{tikzcd}
\]
The cover $\tau$ will be interpreted as an orbifold map $\tau:G\to\orb$, where $\orb$ is an orbifold whose underlying
manifold is $\PP^1$ and has $r$ orbifold points (the images of the branching points); in the same way $m_\vdeg$ induces
an orbifold structure $\PP^1_{d,d}$ with two orbifold points of order~$d$ at $0,\infty$.
We consider a surface $S$ as a diagonal quotient of $G\times\PP^1$ by the action of $\ZZ/\vdeg$, see 
\eqref{eq:diag} for details. The cover $\pi_F$ appears as the restriction to the second factor in the 
vertical part of the pull-back of $\pi$ and $\tau_2$, see diagram~\eqref{eq:diagconm}. 
\begin{equation}\label{eq:diagconm}
\begin{tikzcd}
G\times\fd\ar[r]\ar[d,"\mathbf{1}_G\times\pi_F" left]
&\dcover\ar[d,"\pi"]\\
G\times\PP^1\ar[r,"\tau_2"]\ar[dr, "\tau\times m_\kappa" below left]
&
S\ar[d,"\tau_3"]\\
&
\PP^1\times\PP^1
\end{tikzcd}
\end{equation}
The normal surface $S$ has $2r$ cyclic quotient points. There are several isotrivial
fibrations hidden in~\eqref{eq:diagconm}. The composition of $\tau_3$ with the
first projection can be seen as a ruled surface $S\to\orb$;
the composition of $\tau_3$ with the
second projection is an isotrivial fibration $S\to\PP^1_{d,d}$.

The surface $S_d$ inherits an $\fd$-isotrivial fibration structure $\dcover\to\orb$
and the main goal of the paper is to compute its cohomology of degree~$1$,
more precisely its eigen-decomposition
by the monodromy of~$\pi$.
The surface $S_d$ is also the finite quotient of
of $G\times\fd$ by a non-free action of $\ZZ/\vdeg$.

The case $r=2$ was studied in~\cite{campillo} and some results will be used here. The orbifolds in~\cite{campillo}
are rational, while in this work they are arbitrary orbifolds supported in $\PP^1$. 

This family can also be constructed by a series of weighted blow-ups and blow-downs of the Hirzebruch surface $\Sigma_\alpha$.
This fact allows us to determine the class groups of those surfaces~$S$ starting
from particular presentations of Picard groups of Hirzebruch surfaces.

Our strategy to describe the cyclic coverings is to use a generalization of the Esnault-Viehweg theory for cyclic coverings of 
smooth surfaces, developed in~\cite{Esnault-Viehweg82}, to normal surfaces with quotient singularities (see~\cite{ACM19}). 

From an algebraic point of view, a $d$-cyclic covering $\tilde X\to X$ of a projective normal surface $X$ with at most quotient 
singularities ramified along a Weil divisor $D$ is determined by the 
choice (and existence) of a divisor class $H\in \cl(X)$ satisfying $D\sim dH$.
As long as $\cl(X)$ has no torsion, the mere existence of $H$ is enough. Otherwise, the choice of the particular $H\in \cl(X)$ 
is also necessary (see Example~\ref{ex:torsion}). 
In this context, if $D$ is a $\QQ$-normal crossing divisor, the decomposition of 
$H^1(\tilde X,\CC)$ in invariant subspaces with respect to the action of the monodromy of the cover can be retrieved
from the Hodge decomposition of $H^1(\tilde X,\cO_{\tilde X})\oplus H^0(\tilde X,\Omega_{\tilde X}^1)$, where the invariant subspaces of the first term 
are naturally isomorphic to $H^1(X,\cO_X(L^{(l)}))$ for certain divisors $L^{(l)}$, $l=0,...,d-1$ and 
$H^0(\tilde X,\Omega_{\tilde X}^1)\cong\overline{H^1(\tilde X,\cO_{\tilde X})}$.
In case $X=\PP^1\times\PP^1$ it is well known that this Hodge decomposition by the monodromy splits in two factors $\Hh\oplus\Hv$,
that correspond with the restriction of the covering $\pi$ to a vertical and a horizontal fiber of the birule of~$X$, 
see section~\ref{sec:p1p1}.

The family of surfaces $S$ presented here has a similar splitting that is described in detail in this paper. 
In order to do that, the concept of \emph{greatest common vertical covering} for a family of coverings will be introduced.
For an explicit solution of the original problem, a description of the cohomology $H^*(S,\cO_S(D))$ of a Weil divisor $D$ is given. 
Such cohomology is oftentimes concentrated in a single degree. Concrete formulas are given in section~\ref{sec:cohomology}.
The main result of this paper is proved in section~\ref{sec:main} (Theorem~\ref{thm:gencase}) 
and it refers to the $H^1$-eigenspace decomposition by the monodromy.

\begin{theorem0}
Let $S_d$ be the cyclic covering of $S$ associated with $(d,D,H)$, $D\in \Div(S)$, $H\in\cl(S)$, 
$D\sim dH$, where $D$ has $\QQ$-normal crossings. Then 
\[
H^1(S_d,\cO_{S_d})\cong \Hh\oplus\Hv
\]
where 
\begin{itemize}
\item $\Hv$ is the $1$-cohomology of the structure sheaf of the restriction of an intermediate cover of $\pi$ to a rational horizontal \emph{fiber} and
\item $\Hh$ is the $1$-cohomology of the structure sheaf of the greatest common vertical cover of  an intermediate cover. 
\end{itemize}
In particular, $H^1(S_d,\cO_{S_d})$ splits as a direct sum of the cohomology of two cyclic covers of~$\PP^1$ 
and the splitting respects the eigenspaces of the monodromy and the Hodge structure.
\end{theorem0}

The notions of rational horizontal \emph{fiber} and \emph{greatest common vertical cover} will be 
explained in the work; the degrees of intermediate covers will be made explicit in the text.

As an outline of the paper, in section~\ref{sec:esnault-viehweg}, the general theory of Esnault-Viehweg is 
reviewed for the sake of completeness. Section~\ref{sec:settings} is devoted to the description of the family 
of surfaces $S$ and their divisor class group. In section~\ref{sec:cohomology} we give explicit formulas for 
the cohomology of Weil divisors on $S$, proving when this cohomology is concentrated in a single degree. 
In section~\ref{sec:main}, the main results are stated and proved. Some relevant examples are given in 
section~\ref{sec:ex}, including isotrivial fibered surfaces.
The paper ends with cyclic covers appearing in weighted L{\^e}-Yomdin singularities, see section~\ref{sec:yomdin}. 

\section*{Acknowledgement}
We want to thank the anonymous referees of the manuscript for their insightful comments, which have helped improve 
the exposition of this paper.

\numberwithin{equation}{section}

\section{Esnault and Viehweg's method}
\label{sec:esnault-viehweg}

For notation, let $\zeta_m:=\exp\frac{2\pi\sqrt{-1}}{m}$.

\subsection{Cyclic covers of abelian quotient singular surfaces}\mbox{}
\label{sec:Esnault}

In~\cite{Esnault-Viehweg82} the authors set the theory of ramified cyclic covers 
of projective smooth varieties. In this paper we are going to use a generalization
of this theory for projective normal surfaces having cyclic quotient singularities \cite{ACM19}.
Let $X$ be either a projective smooth variety or a projective surface
with cyclic quotient singularities. A cyclic cover $\pi:\tilde{X}\to X$
is algebraically determined by three data $(d,D,H)$ where $D$ is linearly equivalent to $dH$.
The number of sheets of $\pi$ is~$d$ and the ramification locus is a divisor~$D$. We emphasize that
$H \in \cl(X)$ while $D$ is a true Weil divisor in $\Div(X)$ and not just a linear equivalence class
in $\cl(X)$. Let 
\[
D:=\sum_{i=0}^r n_i D_i
\]
be the decomposition of $D$ into irreducible divisors.
Such a cyclic cover is defined topologically by a morphism $\rho:H_1(X^{\textrm{reg}}\setminus D,\ZZ)\to\ZZ/d$.
If $\mu_i$ is a meridian of $D_i$, then $\sigma(\mu_i)\equiv n_i\bmod{d}$. Usually one thinks of $D$
as an effective divisor with $0<n_i<d$ but actually the numbers $n_i$ are only defined modulo $d$
and the divisor does not need to be effective. If the meridians of $D$ generate $H_1(X^{\textrm{reg}}\setminus D,\ZZ)$
no more data is needed, but if not, different covers may have the same ramification divisor. 
To determine the cover one needs another Weil divisor $H$ such that $D\sim dH$; to be more precise, only the 
class of $H$ matters. The cover is the normalization of the zero locus of a multisection of
the fiber bundle $\cO_X(H)$ associated with the isomorphism $\cO_X(H)^{\otimes d}\cong\cO_X(D)$.
The data~$d$ is fixed; the divisor $H$ can be replaced by any other linearly equivalent divisor.
Moreover, given any divisor $A$ we can replace $D$ by $D+dA$ and $H$ by $H+A$. In particular,
the data $(d,D,H)$ can be replaced by $(d,\tilde{D},0)$, where 
$\tilde{D}:=D-dH$. 

Note finally that the number of connected
components of $\tilde{X}$ is the index of $\rho(H_1(X^{\textrm{reg}}\setminus D,\ZZ))$ in $\ZZ/d$.
If $X$ is smooth and simply connected, then this number coincides
with $\gcd(d,n_1,\dots,n_r)$. We will state later what happens in the cyclic quotient case.

It is possible to track algebraically the action of the $1$-cohomology of the monodromy $\sigma:\tilde{X}\to\tilde{X}$
of the covering. In fact, it is possible to get this action on $H^1(X,\cO_X)$ and then derive the action 
on $H^1(X,\CC)$ via the Hodge decomposition.
The theorem below has been proved in \cite{Esnault-Viehweg82} in the smooth case and in \cite[Theorem 2.3]{ACM19}
in the cyclic quotient case (we restrict our attention to~$H^1$).

\begin{theorem}\label{thm:ev}
With the previous notations, if $D$ is a divisor with simple $\QQ$-normal crossings
then
\[
  H^1(\tilde{X},\cO_{\tilde{X}}) = \bigoplus_{\l=0}^{d-1} H^1 (X, \cO_X(L^{(\l)})), \quad
  L^{(\l)} = -\l H + \sum_{i=1}^r \floor{\frac{\l n_i}{d}} D_i,
\]
where the monodromy of the cyclic covering acts on $H^1(X,\cO_X(L^{(\l)}))$ by multiplication by~$\zeta_d^{\l}$.
\end{theorem}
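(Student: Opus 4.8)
The plan is to realize $\tilde{X}$ explicitly as a normalized cyclic cover and then decompose its structure sheaf as a $\ZZ/d$-representation. Write $\mathcal{L} := \cO_X(H)$, a rank-one reflexive (divisorial) sheaf, and fix the isomorphism $\mathcal{L}^{[d]} \cong \cO_X(D)$ given by $D \sim dH$, where $\mathcal{L}^{[d]}$ denotes the reflexive $d$-th power; this amounts to a rational section $s$ with $\ddiv(s) = D$. First I would form the \emph{unnormalized} cover $X' = \operatorname{Spec}_X \mathcal{A}$, where $\mathcal{A} = \bigoplus_{\l=0}^{d-1}\mathcal{L}^{[-\l]}$ carries the $\cO_X$-algebra structure in which the product $\mathcal{L}^{[-\l]}\otimes\mathcal{L}^{[-\l']}\to\mathcal{L}^{[-\l-\l']}$ is the obvious one for $\l+\l'<d$ and is twisted by $s$ when $\l+\l'\geq d$. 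By construction $X'$ carries a fibrewise $\ZZ/d$-action in which the summand $\mathcal{L}^{[-\l]}$ is the $\zeta_d^{\l}$-eigensheaf, and $\tilde{X}$ is its normalization $\nu:\tilde X\to X'$, so that $\pi$ factors as $(X'\to X)\circ\nu$ and is finite.

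The core of the argument is a local computation of $\pi_*\cO_{\tilde X}$ near the branch divisor. Since $\pi$ is finite and $\nu$ is the normalization, $\pi_*\cO_{\tilde X}$ is reflexive and agrees with $\mathcal{A}$ away from $D$ and the singular locus; it therefore suffices to identify it in codimension one, i.e.~at a general point of each component $D_i$. Working in a local (orbifold) chart in which $D_i = \{x_i=0\}$ and $s$ vanishes to order $n_i$, the raw cover has the form $t^d = x_i^{n_i}u$ with $u$ a unit, and its integral closure is generated over $\cO$ by the elements $t^{\l}/x_i^{\lfloor \l n_i/d\rfloor}$ for $\l=0,\dots,d-1$, since $(t^\l/x_i^k)^d = x_i^{\l n_i - kd}u^\l$ is regular exactly when $k\leq \l n_i/d$. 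Reading this off grading by grading shows that the $\zeta_d^{\l}$-eigensheaf of $\pi_*\cO_{\tilde X}$ is $\mathcal{L}^{[-\l]}(\sum_i \lfloor \l n_i/d\rfloor D_i) = \cO_X(L^{(\l)})$, while the $\ZZ/d$-action is unchanged by normalization. Hence, as $\ZZ/d$-equivariant sheaves,
\[
\pi_*\cO_{\tilde X} \cong \bigoplus_{\l=0}^{d-1}\cO_X\bigl(L^{(\l)}\bigr), \qquad \sigma \text{ acting by } \zeta_d^{\l} \text{ on the } \l\text{-th summand.}
\]

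Finally I would pass to cohomology. Because $\pi$ is a finite morphism, $R^q\pi_*\cO_{\tilde X}=0$ for $q>0$, so the Leray spectral sequence degenerates and $H^1(\tilde X, \cO_{\tilde X}) \cong H^1(X, \pi_*\cO_{\tilde X})$. The eigen-decomposition above is by coherent subsheaves, so it induces the claimed splitting $H^1(\tilde X,\cO_{\tilde X}) = \bigoplus_{\l} H^1(X,\cO_X(L^{(\l)}))$, and the functoriality of the $\ZZ/d$-action identifies $H^1(X,\cO_X(L^{(\l)}))$ with the $\zeta_d^{\l}$-eigenspace of the monodromy $\sigma$.

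The main obstacle, and the only place where the $\QQ$-normal crossing and quotient-singularity hypotheses genuinely enter, is the local normalization step. In the smooth Esnault--Viehweg setting this is the standard toric computation on $\CC^2$ branched along coordinate axes; in the present setting one must verify that the floor formula survives passage to the orbifold/V-manifold charts at the cyclic quotient points and along the (possibly singular) intersections of the $D_i$. This requires the reflexive-sheaf bookkeeping of~\cite{ACM19} to guarantee that $\pi_*\cO_{\tilde X}$ remains reflexive and that its graded pieces are exactly the divisorial sheaves $\cO_X(L^{(\l)})$, the $\QQ$-normal crossing hypothesis ensuring that the correction at a point of intersection is simply the sum of the one-variable corrections coming from each branch.
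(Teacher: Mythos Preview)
Your sketch is essentially the standard Esnault--Viehweg argument (construct the cyclic algebra, normalize via the local floor computation, push forward along the finite map), and it is correct; you also correctly flag that the passage to cyclic quotient charts is exactly what~\cite{ACM19} supplies. Note, however, that the paper does \emph{not} prove this theorem at all: it is stated as background and simply attributed to~\cite{Esnault-Viehweg82} in the smooth case and to~\cite[Theorem~2.3]{ACM19} in the cyclic quotient case, so there is no ``paper's own proof'' to compare against beyond those references, whose approach your proposal faithfully reproduces.
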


\begin{remark}\label{rem:Dtilde}
The above divisors have nicer expressions if we apply them to $(d,\tilde{D},0)$. If we assume that
\begin{equation}\label{eq:Dtilde}
\tilde{D}=D-dH=\sum_{i=1}^n m_i D_i
\end{equation}
then
\[
  L^{(\l)} = \sum_{i=1}^n \floor{\frac{\l m_i}{d}} D_i.
\]
Note also that the  condition on $\QQ$-normal crossings is applied
only to the reduction of $\tilde{D}$ modulo $d$.

Let $B$ be any Weil divisor in~$X$. Then, since $B\cdot \tilde{D}=0$, we have:
\begin{equation}\label{eq:int_form_Ll}
L^{(\l)}\cdot B=\sum_{i=1}^n \floor{\frac{\l m_i}{d}} D_i\cdot B-\sum_{i=1}^n \frac{\l m_i}{d} D_i\cdot B=
-\sum_{i=1}^n \left\{{\frac{\l m_i}{d}}\right\} D_i\cdot B.
\end{equation}
In particular, if $B$ is effective and it does not have common components with $\tilde{D}$, then $L^{(\l)}\cdot B\leq 0$.
Moreover, $L^{(\l)}\cdot B= 0$ if and only if $\l$ is a multiple of $\frac{d}{\gcd(d,m)}$ where
$m = \gcd\{m_i\mid D_i\cdot B\neq 0\}$.
\end{remark}

If we perform a (weighted) blow-up of $\hat{\pi}:\hat{X}\to{X}$, then we obtain a 
new cyclic cover~$\varpi$ by pulling back
our original cover $\pi$.
If $\pi$ is defined by $(d,\tilde{D},0)$ then $\varpi$ is defined by $(d,\hat{\pi}^*\tilde{D},0)$. Note 
that as $\tilde{D}\sim 0$, it is an integral Cartier divisor and then in particular $\hat{\pi}^*\tilde{D}$
is also an integral Cartier divisor. It is determined by $D$ and the multiplicity 
in~$\hat{\pi}^*\tilde{D}$ of the exceptional divisor of $\hat{\pi}$.

\begin{ex}
Let us suppose that $(X,P)\cong\frac{1}{n}(a,b)$ and we perform a $(p,q)$-weighted blow-up $\hat{\pi}:\hat{X}\to X$; for simplicity,
assume $n,a,b$ are pairwise coprime and $\gcd(p,q)=1$. Let $e:=\gcd(n,pb-qa)$. Assume that $\tilde{D}$
is as in~\eqref{eq:Dtilde}, and let $\nu_i$ be the $(p,q)$-multiplicity of $D_i$ (it vanishes if $P\notin D_i$).
Let $E$ be the exceptional component of $\hat{\pi}$ and let us denote $D_i$ the strict transform of $D_i$ in $\hat{X}$
(the context will indicate to which divisor we are referring to). Then,
\[
\hat{\pi}^*\tilde{D}=\frac{1}{e}\left(\sum_{i=1}^n \nu_i m_i\right)E+\sum_{i=1}^n m_i D_i.
\]
If $\tilde{D}$ is a simple $\QQ$-normal crossing divisor modulo $d$,
then it is also the case for $\hat{\pi}^*\tilde{D}$.
\end{ex}

\begin{defi}
Let $(W,P)$ be a germ of type $\frac{1}{n}(a,b)$, where 
$n,a,b$ are pairwise coprime. This space admits two special expressions,
namely $\frac{1}{n}(1,b')$ or $\frac{1}{n}(a',1)$, where 
$a',b'$ are well defined modulo $n$. A 
weighted blow-up of $W$ is said to be \emph{special}
if it is of weight $(a',1)$ or~$(1,b')$.
\end{defi}

This special blow-ups have nice properties. The curvettes of
the exceptional components are the extremal smooth curve germs of $W$.
For example, if $a',b'$ are reduced modulo $n$, then they are the first step
of the Jung-Hirzebruch resolutions.

\begin{lemma}\label{lema:quot_local}
Let $(W,P)$ be a germ of type $\frac{1}{n}(a',1)\cong\frac{1}{n}(1,b')$, 
$\gcd(a',n)=\gcd(b',n)=1$. Let $\pi_x,\pi_y$ be the special
weighted blow-ups of weights $(a',1)$, $(1,b')$, respectively, with 
exceptional components $E_x,E_y$. We denote by $\mu_x,\mu_y,\mu_x^e,\mu_y^e$
suitable meridians of $\{x=0\}$, $\{y=0\}$, $E_x$, $E_y$, in 
$W\setminus\{xy=0\}$. 

\begin{enumerate}[label=\rm\alph{enumi})]
\item The local fundamental group of $W\setminus\{P\}$
is cyclic of order~$n$; $\mu_x^e$ and $\mu_y^e$ are (separately)
generators of this group.

\item  The local fundamental group of $W\setminus\{y=0\}$
is~$\ZZ$. The sets $\{\mu_x^e\}$ and $\{\mu_y^e,\mu_y\}$ generate
this group (separately). A similar statement holds
for $W\setminus\{x=0\}$.

\item The local fundamental group of $W\setminus\{xy=0\}$
is~$\ZZ^2$.  The sets $\{\mu_x^e,\mu_x\}$ and $\{\mu_y^e,\mu_y\}$ generate
this group (separately).

\end{enumerate}
\end{lemma}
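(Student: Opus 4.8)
The plan is to realize the germ $(W,P)$ as an affine toric surface and to read off all three fundamental groups, together with the distinguished meridians, from one rank-$2$ lattice $N$. Writing $W\cong\CC^2/(\ZZ/n)$ with generator acting by $(x,y)\mapsto(\zeta_n^{a'}x,\zeta_n y)$, the hypothesis $\gcd(a',n)=1$ makes this action free away from the origin, so $W$ is the toric surface $X_\sigma$ attached to the lattice $N:=\ZZ^2+\ZZ\cdot\tfrac1n(a',1)$ and the cone $\sigma=\langle e_1,e_2\rangle$, the two rays $\RR_{\ge 0}e_1$, $\RR_{\ge 0}e_2$ being the images of $\{x=0\}$, $\{y=0\}$. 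Since a weighted blow-up is an isomorphism over the open torus $T_N=W\setminus\{xy=0\}\cong(\CC^*)^2$, every space occurring in the statement contains $T_N$ as the complement of a union of toric divisors; hence $\pi_1(T_N)=N$ is abelian of rank $2$, all three groups in the lemma are quotients of $N$, and the whole statement reduces to arithmetic in $N$.

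First I would record the toric dictionary for meridians: the meridian of a toric divisor $D_\rho$ is the primitive ray generator $v_\rho\in N$, viewed inside $\pi_1(T_N)=N$. Thus $\mu_x=e_1$ and $\mu_y=e_2$. The special blow-up $\pi_x$ of weight $(a',1)$ is the star subdivision of $\sigma$ along the ray in direction $(a',1)$, whose primitive generator in $N$ is $v_x:=\tfrac1n(a',1)$ (note $(a',1)=n\,v_x$ with $v_x\in N$); likewise $\pi_y$ subdivides along $v_y:=\tfrac1n(1,b')$, where $b'\equiv (a')^{-1}\pmod n$ is forced by the normalization $\tfrac1n(a',1)\cong\tfrac1n(1,b')$. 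As these blow-ups do not touch $T_N$, the new meridians are exactly the new ray generators, $\mu_x^e=v_x$ and $\mu_y^e=v_y$. Two elementary relations in $N$ then drive everything: $n\,v_x=(a',1)\equiv e_2$ and $n\,v_y=(1,b')\equiv e_1$ modulo $\langle e_1,e_2\rangle=\ZZ^2$, and, using $a'b'\equiv 1$, one gets $v_y\equiv b'\,v_x\pmod{\ZZ^2}$ (which also shows $v_y\in N$).

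With this in hand the three parts are pure lattice bookkeeping. Each removed curve is smooth and the complement is taken inside the smooth locus $W\setminus\{P\}$, so the standard van Kampen argument (``adding back a smooth divisor kills its meridian'') applies verbatim and, the ambient group being abelian, normal closures are ordinary subgroups: $\pi_1(W\setminus\{y=0\})=N/\langle e_1\rangle$, $\pi_1(W\setminus\{x=0\})=N/\langle e_2\rangle$, and $\pi_1(W\setminus\{P\})=N/\langle e_1,e_2\rangle$. For (c) one checks that $\{v_x,e_1\}$ and $\{v_y,e_2\}$ are each $\ZZ$-bases of $N$ (their determinants equal $\pm\tfrac1n$, the covolume of $N$), so each generates $N\cong\ZZ^2$. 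For (a), $N/\ZZ^2\cong\ZZ/n$ is generated by $\overline{v_x}=\mu_x^e$ and by $\overline{v_y}=b'\,\overline{v_x}=\mu_y^e$, both generators since $\gcd(a',n)=\gcd(b',n)=1$. For (b), $N/\langle e_1\rangle\cong\ZZ$ is generated by $\overline{v_x}=\mu_x^e$; moreover $\mu_y=e_2\equiv n\,v_x$ and $\mu_y^e=v_y\equiv b'\,v_x$ give $\langle\mu_y^e,\mu_y\rangle=\langle\gcd(b',n)\,\overline{v_x}\rangle=\langle\overline{v_x}\rangle$, so $\{\mu_y^e,\mu_y\}$ generates as well; the case of $W\setminus\{x=0\}$ follows by the symmetry $x\leftrightarrow y$, $a'\leftrightarrow b'$.

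The one step that needs genuine care, and is the main obstacle, is the topological identification of the exceptional meridians $\mu_x^e,\mu_y^e$ with the lattice vectors $v_x,v_y$: one must confirm that the special blow-up is precisely the star subdivision along $v_x$ (so that $E_x$ is the toric divisor of that ray) and that, the blow-up being an isomorphism over $T_N$, the loop $\mu_x^e$ is represented in $\pi_1(T_N)=N$ by that very primitive generator. Everything else --- primitivity of $e_1,e_2,v_x,v_y$ in $N$, the relation $a'b'\equiv 1$, and the determinant computations --- is routine. A blow-up-free alternative is to compute directly with the free $\ZZ/n$-action through the unramified covers $(\CC^*)^2\to W\setminus\{xy=0\}$, $\CC\times\CC^*\to W\setminus\{y=0\}$, and $\CC^2\setminus\{0\}\to W\setminus\{P\}$; these give the isomorphism types at once, but pinning down which loops the named meridians represent is more delicate, which is why I prefer the toric description.
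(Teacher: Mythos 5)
Your proof is correct, but it takes a genuinely different route from the paper's. The paper's argument is a two-line reduction: iterate the special blow-ups to reach the full Jung--Hirzebruch resolution of $W$ and invoke Mumford's method, which presents each of the local fundamental groups with generators the meridians of \emph{all} components of the resolved divisor and relations read off the dual graph; the statement follows by eliminating generators. You instead never leave $W$: you identify $W\setminus\{xy=0\}$ with the torus of the overlattice $N=\ZZ^2+\ZZ\cdot\tfrac1n(a',1)$, so that every group in the lemma is an explicit quotient of $N$ and the four meridians are the primitive vectors $e_1$, $e_2$, $\tfrac1n(a',1)$, $\tfrac1n(1,b')$; the generation claims then reduce to determinants and gcd's. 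What your approach buys is self-containedness --- no full resolution, no plumbing presentation to solve --- and it hands you the meridian classes explicitly. The one input you rely on (and correctly flag as the crux) is the toric dictionary ``meridian of $D_\rho$ $=$ primitive generator of $\rho$ in $\pi_1(T_N)=N$'', applied also to the exceptional ray of the star subdivision; this is standard and can be verified in the chart $(x,y)=(x_1^{a'},x_1y_1)$, where the $\mu_n$-action lifts to $(x_1,y_1)\mapsto(\zeta x_1,y_1)$, so a disk transverse to $E_x$ downstairs is the $\mu_n$-quotient of one upstairs and $\mu_x^e=\tfrac1n(a',1)$ as you claim. One minor slip: the relation ``$n\,v_x=(a',1)\equiv e_2$ modulo $\ZZ^2$'' should be read modulo $\ZZ e_1$ (modulo $\ZZ^2$ the class is $0$); since that is exactly how you use it in part b), nothing breaks.
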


\begin{proof}
Iterating the special blow-ups one obtains the Jung-Hirzebruch resolution of $W$.
Using Mumford's method~\cite{Mumford-topology} a presentation of the distinct fundamental
groups (generated by the meridians of all divisors) is given 
and the result follows.
\end{proof}

Note that we are not asking the coverings $\pi,\varpi$ above to be connected. In the classical case ($X$ smooth and 
simply connected) it is easy to relate the number of connected components and the arithmetic of the coefficients of~$D$. 
If we drop the smoothness, more conditions are needed.

\begin{prop}\label{prop:numb_conn_comp}
Let $X$ be a simply connected projective surface with normal 
cyclic quotient
singularities. Let $\pi:\tilde{X}\to X$ be a cyclic branched cover
associated with $(d,D,H)$, where $D$ has \emph{smooth} components
and $\QQ$-normal crossings modulo $d$. Let $\hat{\sigma}:\hat{X}\to X$
be the composition of \emph{one} special blow-up for each 
singular point of $X$.

Let $m$ be the greatest common divisor of $d$ and the coefficients of the divisor
$\hat\sigma^*(D-dH)$. Then $\tilde{X}$ has $m$ connected components.
\end{prop}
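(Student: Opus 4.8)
The plan is to reduce everything to the topological description of the cover and to compute the image of the monodromy homomorphism $\rho\colon H_1(X^{\textrm{reg}}\setminus D,\ZZ)\to\ZZ/d$. As recalled above, the number of connected components of $\tilde X$ equals the index $[\ZZ/d:\rho(H_1(X^{\textrm{reg}}\setminus D,\ZZ))]$, so it suffices to identify this image. Since $\rho$ is a homomorphism into the cyclic group $\ZZ/d$, its image is the subgroup generated by the values of $\rho$ on any generating set of $H_1(X^{\textrm{reg}}\setminus D,\ZZ)$, and the index of the subgroup generated by integers $a_1,\dots,a_k$ in $\ZZ/d$ is precisely $\gcd(d,a_1,\dots,a_k)$. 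Thus the whole argument amounts to producing a convenient generating set and reading off the corresponding values of $\rho$.

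First I would fix that generating set. Because $X$ is simply connected, a van Kampen argument (filling in the contractible cones over the links of the singular points) shows that $\pi_1(X^{\textrm{reg}})$ is normally generated by the images of the local fundamental groups at the points of $\sing(X)$, so in homology $H_1(X^{\textrm{reg}}\setminus D,\ZZ)$ is generated by the meridians $\mu_i$ of the components of $D$ together with local contributions at each $P\in\sing(X)$. This is exactly where Lemma~\ref{lema:quot_local} enters and where one special blow-up per point turns out to be enough: by part~a) the meridian $\mu^e$ of the exceptional component of a single special blow-up already generates the local group $\pi_1(W\setminus\{P\})\cong\ZZ/n$, while parts~b)--c) show that, when one or two branches of $D$ pass through $P$, the set consisting of $\mu^e$ together with the meridians of those branches (which are among the global $\mu_i$) still generates the fundamental group of the punctured germ minus $D$. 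Choosing at each $P$ the special blow-up whose exceptional meridian appears in the relevant generating set, the map $\hat\sigma\colon\hat X\to X$ produces a generating system of $H_1(X^{\textrm{reg}}\setminus D,\ZZ)$ consisting of the $\mu_i$ and the exceptional meridians $\mu^e_j$ of the components $E_j$ created by $\hat\sigma$.

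It then remains to evaluate $\rho$ on these generators. Following Remark~\ref{rem:Dtilde} I would replace $(d,D,H)$ by $(d,\tilde D,0)$ with $\tilde D=D-dH\sim0$, so that the cover is the normalization of $\{y^d=g\}$ for a rational function $g$ with $\ddiv(g)=\tilde D$; the monodromy along any meridian is then the order of vanishing of $g$ along the corresponding divisor, taken modulo~$d$. Pulling back along $\hat\sigma$ yields the cover $(d,\hat\sigma^*\tilde D,0)$, and the coefficient of each prime divisor in $\hat\sigma^*\tilde D$ is exactly the value of $\rho$ on its meridian: for a strict transform $D_i$ it is the coefficient $m_i$ (which reduces to $n_i$ modulo $d$), and for an exceptional component $E_j$ it is the multiplicity computed as in the Example. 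Hence $\rho(H_1(X^{\textrm{reg}}\setminus D,\ZZ))$ is the subgroup generated by the full set of coefficients of $\hat\sigma^*\tilde D=\hat\sigma^*(D-dH)$, and its index in $\ZZ/d$ is the greatest common divisor of $d$ and those coefficients, namely $m$. The main obstacle, and the step deserving the most care, is the generation statement of the second paragraph: justifying that a single special blow-up at each singular point already captures the entire local monodromy, so that no deeper Jung--Hirzebruch resolution is needed, and that the exceptional meridians together with the $\mu_i$ genuinely generate $H_1(X^{\textrm{reg}}\setminus D,\ZZ)$. Once this is secured, the evaluation of $\rho$ and the gcd count are routine.
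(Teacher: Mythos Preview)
Your proposal is correct and follows essentially the same strategy as the paper: reduce to the monodromy homomorphism, produce a generating set of meridians using Lemma~\ref{lema:quot_local}, and read off $\rho$ from the coefficients of $\hat\sigma^*(D-dH)$.

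The only organizational difference is that the paper first passes to the full minimal resolution $\sigma_Y\colon Y\to X$. Since $Y$ is smooth and simply connected, the generation of $H_1(Y\setminus D_Y,\ZZ)$ by meridians of $D_Y$ is immediate from the classical case, avoiding the van Kampen argument you sketch; Lemma~\ref{lema:quot_local} is then used afterwards to discard all exceptional meridians except those of the first special blow-up at each point. Your route works directly on $\hat X$ and uses Lemma~\ref{lema:quot_local} to build the generating set from the start. The paper's detour through $Y$ buys a cleaner justification of the generation step (the ``main obstacle'' you flag becomes automatic), while your approach is more direct once that step is accepted. Either way the computation of $\rho$ on the generators and the gcd conclusion are identical.
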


\begin{proof}
Let $\sigma_Y:Y\to X$ be the minimal resolution of the singularities of $X$.
Let 
\[
D_Y:=d\left\{\frac{\sigma_Y^*(D-dH)}{d}\right\},\qquad 
H_Y:=-\left\lfloor\frac{\sigma_Y^*(D-dH)}{d}\right\rfloor.
\]
Let us denote also 
\[
\hat{D}:=d\left\{\frac{\hat{\sigma}^*(D-dH)}{d}\right\},\qquad 
\hat{H}:=-\left\lfloor\frac{\hat{\sigma}^*(D-dH)}{d}\right\rfloor.
\]
Let $E$ be an exceptional component of $\hat{\sigma}$; its multiplicity
in $\hat{D}$ coincides with the multiplicity of its strict
transform in $D_Y$. The same applies for the irreducible components of~$D$.

Since $X$ is simply connected, it is the case for $Y$.
Then $H_1(Y\setminus D_Y,\ZZ)$ is generated by the meridians of the irreducible components
of $D_Y$. Let $C$ be an irreducible component of $D_Y$ and let $\mu_C$
the class of its meridians. Let $\rho:H_1(Y\setminus D_Y,\ZZ)\to\ZZ/d$
be the morphism determining the covering over $Y$ (which has the same number~$m$
of connected components as $\tilde{X}$). Recall that $\rho(\mu_C)$ is the
coefficient of $C$ in $D_Y$ ($\bmod{\,d}$).
Then, $m$ equals the greatest common divisor of $d$ and the coefficients of the divisor
$D_Y$.

This comes from the fact that the whole set of meridians generate $H_1(Y\setminus D_Y,\ZZ)$. But Lemma~\ref{lema:quot_local} implies
that only the strict transforms of the irreducible components of $\hat{D}$ suffice and the result follows.
\end{proof}

Let $\pi$ be a cyclic cover of a surface $X$ with cyclic quotient
singular points, associated with $(d,D,H)$ where $D$ is a simple
$\QQ$-normal crossing divisor.
Let $C\subset X$ be an irreducible curve (with only unibranch points) such that the union  of $C$ and the support of $D$  is a $\QQ$-normal crossing divisor. Then, $\pi_C:=\pi_{|}:\pi^{-1}(C)\to C$ is a (maybe non connected) cyclic cover
of the curve~$C$. In the smooth case it is easy to obtain the divisors defining this cover. In the cyclic quotient case
some work has to be done.

Let $P_1,\dots,P_s\in C\cap\sing X$ and let $\hat\sigma:\hat{X}\to X$ be a composition of weighted blow-ups
at $P_1,\dots, P_s$ such that the strict transform of $C$ (still
denoted by $C$) is contained in the regular part of $\hat{X}$,
which exists because of the $\QQ$-normal crossing condition.
Let $\hat\pi$ be the pull-back of $\pi$ by $\hat\sigma$.
Note that $\pi_C$ and $\hat{\pi}_C$ can be identified. 

The covering $\hat{\pi}$ is associated with a triple $(d,\hat{D},\hat{H})$
obtained as follows. Consider $\tilde{D}=D-d H$;
then $\hat\sigma^*\tilde{D}=\hat{D}-d\hat{H}$ where the support 
of $\hat{D}$ is contained in the support of the $\mathbb{\QQ}$-divisor
$\pi^*(D)$.

\begin{prop}
\label{prop:restriccion}
Let $C\subset X$ be an irreducible curve (with only unibranch points) such that the union of the support of $D$ with~$C$ is $\QQ$-normal crossings. 

Then, the divisor $D_C$ of $C$ defining $\pi_C$ has support at $D\cap C\equiv\hat{D}\cap C$,
and the multiplicity of each point $P\in\hat{D}\cap C$ is the coefficient
of the irreducible component $D_i$ of $\hat{D}$ containing~$P$.
\end{prop}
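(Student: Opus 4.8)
The plan is to reduce the computation to the birational model $\hat{X}$, where $C$ is smooth and transverse to the branch locus, and then to read off the local monodromy of the restricted cover directly from the coefficients of $\hat{D}$. First I would invoke the identification $\pi_C\cong\hat\pi_C$ established before the statement, so that it suffices to describe the cyclic cover $\hat\pi_C\colon\hat\pi^{-1}(C)\to C$ of the strict transform $C\subset\hat{X}$. By construction $C$ lies in $\hat{X}^{\textrm{reg}}$, and since the union of the supports of $\hat{D}$ and $C$ is $\QQ$-normal crossings, at every point $P\in\hat{D}\cap C$ the surface $\hat{X}$ is smooth, $C$ is smooth, and the unique component $D_i$ of $\hat{D}$ through $P$ meets $C$ transversally. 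Thus locally at each such $P$ we are in the classical smooth normal-crossing situation, and the delicate behaviour at the quotient singular points of $X$ has been absorbed into the choice of $\hat\sigma$.

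Next I would run the topological bookkeeping. The cover $\hat\pi$, being the pull-back $(d,\hat\sigma^*\tilde{D},0)$ of $\pi$, is encoded by the monodromy $\rho\colon H_1(\hat{X}^{\textrm{reg}}\setminus\hat{D},\ZZ)\to\ZZ/d$ sending the meridian of a component $Z$ of $\hat{D}$ to the coefficient of $Z$ in $\hat\sigma^*\tilde{D}$ modulo $d$; since $\hat\sigma^*\tilde{D}=\hat{D}-d\hat{H}$ and $\hat\sigma^*\tilde{D}$ is an integral Cartier divisor, this residue equals the coefficient of $Z$ in $\hat{D}=d\{\hat\sigma^*\tilde{D}/d\}$, which lies in $\{0,\dots,d-1\}$. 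The restricted cover $\hat\pi_C$ is classified by $\rho_C=\rho\circ\iota_*$, where $\iota\colon C\setminus\hat{D}\hookrightarrow\hat{X}^{\textrm{reg}}\setminus\hat{D}$. At a point $P\in\hat{D}\cap C$ on the component $D_i$, transversality implies that a small meridian $\mu_P$ of $P$ taken inside $C$ is sent by $\iota_*$ to a meridian of $D_i$; hence $\rho_C(\mu_P)$ equals the coefficient $m_i$ of $D_i$ in $\hat{D}$. Consequently $P$ appears in the branch divisor $D_C$ with multiplicity $m_i$, the local monodromy of $\pi_C$ there being multiplication by $\zeta_d^{m_i}$, which is exactly the assertion on multiplicities.

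It then remains to match the support $D\cap C$ with $\hat{D}\cap C$. Because $C$ has only unibranch points, the restriction $\hat\sigma|_C\colon C\to C$ of the composition of blow-ups is a bijection onto the original curve carrying $\hat{D}\cap C$ onto $D\cap C$: away from the centres it is an isomorphism, while over each $P_j\in C\cap\sing X$ the strict transform of $C$ meets the exceptional locus in a single point. This last point is where Lemma~\ref{lema:quot_local} enters: the special weighted blow-ups are chosen so that $C$ becomes a curvette of the exceptional component, meeting it transversally at a smooth point, and parts b)--c) of that lemma identify the meridian of this point in $C$ with the meridian $\mu^e$ of the exceptional divisor. I expect this verification at the blown-up singular points to be the main obstacle: one must check that, after the special blow-ups, each intersection of $C$ with the exceptional locus is a single transverse smooth point lying in $\hat{X}^{\textrm{reg}}$, so that the transversality argument of the previous paragraph applies verbatim, and that the coefficient of the exceptional component in $\hat{D}$ is the correct residue. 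Once this local geometry is secured, combining the support identification with the multiplicity computation yields the stated description of $D_C$.
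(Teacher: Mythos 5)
Your proposal is correct and follows essentially the same route as the paper: pass to the blown-up model $\hat{X}$ where $C$ is smooth and transverse to the branch locus, and read off the branch data of $\pi_C$ from the monodromy morphism $\rho$ sending meridians of components of $\hat{D}$ to their coefficients modulo $d$. The paper's own proof is much terser --- it simply enlarges $\hat\sigma$ to the configuration of Proposition~\ref{prop:numb_conn_comp} (whose proof, via Lemma~\ref{lema:quot_local}, already contains the meridian bookkeeping you spell out) and notes that the extra blow-ups do not change the multiplicities of the components meeting $C$ --- so your write-up is a more self-contained expansion of the same argument rather than a different one.
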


\begin{proof}
We add some blow-ups of $X$ such that $\hat{\sigma}$ is as in Proposition~\ref{prop:numb_conn_comp}. The extra blow-ups do not affect
to the multiplicities of the components intersecting~$C$ and then
the result follows.
\end{proof}

Note that, since the $\cl(\PP^1)$ is completely determined by the degree,
we deduce that $d$ divides $\deg D_C$ and $H_C$ is a divisor of 
degree~$\frac{\deg D_C}{d}$.

\subsection{Application to covers of \texorpdfstring{$\PP^1$}{P1}}
\label{subsec:coverP1}
\mbox{} 

We are going to study the characteristic polynomial and the $H^1$-eigenspace decomposition 
of a $d$-cyclic covering of $\PP^1$ associated with a divisor
\[
D=\sum_{j=1}^s m_j\langle p_j\rangle,\qquad \sum_{j=1}^s m_j=d h,\qquad h\in\ZZ,
\]
i.e.~$\deg H=h$; no coprimality conditions are assumed. Then we have
\[
L^{(\l)}\sim -\l H+\sum_{j=1}^s \left\lfloor\frac{\l m_j}{d}\right\rfloor\langle p_j\rangle
\Longrightarrow 
\deg L^{(\l)}=-\sum_{j=1}^s \left\{\frac{\l m_j}{d}\right\}.
\]
Note that $L^{(\l)}$ depends only on $\l\bmod{d}$ and 
$L^{(0)}=0$. Let $n:=\gcd(d,m_1,\dots,m_r)$ and $\hat{d}=\frac{d}{n}$,
$\hat{m}_j:=\frac{m_j}{n}$. Then 
\[
\deg L^{(\l)}=-\sum_{j=1}^s \left\{\frac{\l \hat{m}_j}{\hat{d}}\right\}
\]
and actually $L^{(\l)}$ depends only on $\l\bmod{\hat{d}}$. 
We conclude that for $\l\in\{0,1,\dots,d-1\}$:
\begin{equation}
\label{eq:h1-deg}
h^1(\PP^1,\cO_{\PP^1}(L^{(\l)}))=
\begin{cases}
0&\text{if }\l\equiv 0\bmod{\hat{d}},\\
\displaystyle -1+\sum_{j=1}^s\left\{\frac{\l \hat{m}_j}{\hat{d}}\right\}&\text{otherwise.}
\end{cases}
\end{equation}
Geometrically the $d$-cyclic cover of $\PP^1$ associated
with $D$ is the disjoint union of $n$ copies of a $\hat{d}$-cyclic cover
of $\PP^1$ associated with $\frac{1}{n}D\in\Div(\PP^1)$, where the monodromy exchanges
cyclically these copies. Applying %
Lemma~\ref{covering_proj_line}
the characteristic polynomial of the monodromy is
\begin{equation}\label{eq:acampo}
\Delta(t)=\frac{(t^n-1)^2(t^d-1)^{s-2}}{\displaystyle\prod_{j=1}^s(t^{\gcd(d,m_j)}-1)}.
\end{equation}
Note that we have obtained more than that since we have the monodromy
action on the Hodge structure of the covering space.

\subsection{Normal-crossing covers of \texorpdfstring{$\PP^1\times\PP^1$}{P1xP1}}
\label{sec:p1p1}
\mbox{}

We illustrate the coverings ramified along normal-crossing divisors on surfaces, studying $\PP^1\times\PP^1$.
Let $\pi:\tilde X\to\PP^1\times\PP^1$ be a cover associated to $(d,D,H)$ where $D$ is a normal crossing divisor. 
The condition $D\sim dH$ completely determines $H$ using bidegrees (this is the first difference with 
\emph{reducible normal fake quadrics}, see Definition~\ref{def:rnfq},
since their class group may not be torsion-free).

The cohomology of $\tilde X$ can be studied using Theorem~\ref{thm:ev} (or more precisely the original Theorem of Esnault-Viehweg)
for which we need to know the $1$-cohomology of some divisors. The following result is well known.

\begin{prop}
Let $S$ be a section and let $F$ be a fiber of $\PP^1\times\PP^1$. Then:
\begin{enumerate}[label=\rm(\arabic{enumi})]
\item\label{h0-P1P1} 
$\dim H^0(\PP^1\times\PP^1;\cO(a S+b F))=
\begin{cases}
(a+1)(b+1) & \textrm{ if } a,b\geq 0\\
0 & \textrm{ otherwise. }
\end{cases}
$
\item\label{h2-P1P1} 
$\dim H^1(\PP^1\times\PP^1;\cO(a S+b F))=
\begin{cases}
(a+1)(b+1) & \textrm{ if } a,b\leq -2\\
0 & \textrm{ otherwise. }
\end{cases}
$
\item\label{h1-P1P1} 
$\dim H^2(\PP^1\times\PP^1;\cO(a S+b F))=
\begin{cases}
-(a+1)(b+1) & \textrm{ if } (a+2)(b+2)<0\\
0 & \textrm{ otherwise. }
\end{cases}
$
\end{enumerate}
\end{prop}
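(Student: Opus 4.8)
The plan is to reduce everything to the one-dimensional case via the Künneth formula, exploiting that $\cO(aS+bF)$ is an external product of line bundles on the two factors. First I would record that $\operatorname{Pic}(\PP^1\times\PP^1)\cong\ZZ S\oplus\ZZ F$ is generated by the pullbacks of $\cO_{\PP^1}(1)$ under the two projections $p_1,p_2$, so that
\[
\cO(aS+bF)\cong p_1^*\cO_{\PP^1}(b)\otimes p_2^*\cO_{\PP^1}(a)
\]
(with the roles of $a$ and $b$ assigned according to which factor carries the fiber class). Since all three formulas in the statement are symmetric in $a$ and $b$, the precise convention is immaterial.

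Next I would invoke the Künneth formula for coherent cohomology of an external tensor product on a product of projective varieties over $\CC$. Because the cohomology groups are vector spaces over a field, no $\operatorname{Tor}$ corrections appear and one gets
\[
H^n\bigl(\PP^1\times\PP^1;\cO(aS+bF)\bigr)\cong\bigoplus_{p+q=n}H^p(\PP^1;\cO(a))\otimes H^q(\PP^1;\cO(b)).
\]
The only external input needed is the classical cohomology of line bundles on $\PP^1$: for every integer $k$ one has $h^0(\PP^1;\cO(k))=\max(k+1,0)$, while Serre duality gives $H^1(\PP^1;\cO(k))\cong H^0(\PP^1;\cO(-2-k))^*$, whence $h^1(\PP^1;\cO(k))=\max(-k-1,0)$, and $h^i=0$ for $i\geq 2$ by dimension reasons.

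Assembling these is then a short bookkeeping exercise, one degree at a time. For $n=0$ only the summand $(p,q)=(0,0)$ survives, yielding the product $h^0(a)\,h^0(b)=(a+1)(b+1)$, which is nonzero exactly when $a,b\geq 0$; for the top degree $n=2$ only $(p,q)=(1,1)$ survives, yielding $h^1(a)\,h^1(b)=(a+1)(b+1)$, nonzero exactly when $a,b\leq -2$. These two extreme cases are immediate from the one-dimensional vanishing ranges.

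The degree that requires genuine care is $n=1$, which receives the two \emph{mixed} contributions $h^0(a)\,h^1(b)$ and $h^1(a)\,h^0(b)$. Each is nonzero only when one of the two integers is nonnegative and the other is at most $-2$, so the two terms occupy complementary regions of the $(a,b)$-plane; reconciling their union, together with the resulting product and sign, against the inequalities recorded in the statement is the one place where one must be attentive to the boundary values. This region-splitting for $H^1$ is the only delicate point of the argument; every other ingredient is entirely standard.
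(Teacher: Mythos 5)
Your argument is correct and complete in outline, but it takes a genuinely different route from the paper. You decompose $\cO(aS+bF)$ as an external product $p_1^*\cO_{\PP^1}(b)\otimes p_2^*\cO_{\PP^1}(a)$ and apply the K\"unneth formula, reducing everything to $h^0(\PP^1,\cO(k))=\max(k+1,0)$ and $h^1(\PP^1,\cO(k))=\max(-k-1,0)$; this computes all three cohomology degrees directly and uniformly, and in particular identifies $H^1$ intrinsically as the sum of the two mixed terms $h^0(a)h^1(b)+h^1(a)h^0(b)$, whose supports are disjoint. The paper instead identifies $H^0$ with the space of bihomogeneous polynomials of bidegree $(a,b)$, obtains the top-degree cohomology from it by Serre duality, and then recovers the middle cohomology only indirectly, from Riemann--Roch combined with the vanishing ranges already established. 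Your approach buys a self-contained, degree-by-degree computation (at the cost of invoking K\"unneth for coherent sheaves); the paper's buys an argument using only the most classical inputs, but leaves $H^1$ determined only as $h^0+h^2-\chi$.

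One point you should make explicit: your computation places the nonzero $H^1$ on the two mixed quadrants ($a\geq 0$, $b\leq -2$, or symmetrically) and the nonzero $H^2$ on the quadrant $a,b\leq -2$. This agrees with the paper's Figure~1 and with its proof sketch (Serre duality produces the $a,b\leq-2$ case, which is dual to $H^0$ and hence is $H^2$), but it is the \emph{transpose} of what the proposition literally displays, where the $H^1$ and $H^2$ formulas appear interchanged. Moreover, the region condition printed for the mixed case, $(a+2)(b+2)<0$, excludes the lines $b=-2$, $a\geq0$ (where your K\"unneth term $h^0(a)\,h^1(-2)=a+1$ is nonzero); the correct condition, which your computation yields, is $(a+1)(b+1)<0$. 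So your proof in fact establishes the corrected statement, and it is worth recording that the printed version contains these two transcription errors.
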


\begin{figure}[ht]
\centering
﻿\begin{tikzpicture}[scale=.9]
\fill[color=red!10!white] (0,3.5)--(0,0)--(3.5,0)--(3.5,3.5)--cycle;
\fill[color=green!10!white] (-3.5,-2)--(-2,-2)--(-2,-3.5)--(-3.5,-3.5)--cycle;
\fill[color=blue!10!white] (0,-3.5)--(0,-2)--(3.5,-2)--(3.5,-3.5)--cycle;
\fill[color=blue!10!white] (-3.5,3.5)--(-3.5,0)--(-2,0)--(-2,3.5)--cycle;
\foreach \a in {-3,-2,...,3}
{
\foreach \b in {-3,-2,...,3}
{
\fill[black] (\a,\b) circle [radius=.1cm];
}
}
\draw[->] (-4,0)--(4,0) node[right] {$b=0$};
\draw[->]  (0,-4)--(0,4) node[above] {$a=0$};
\draw[line width=2,color=red] (-3.5,-1)--(3.5,-1) node[right,color=black] {$H^0=H^1=H^2=0$};
\draw[line width=2,color=red] (-1,-3.5)--(-1,3.5);
\node at (1.5,1.5) {$H^0$};
\node at (-2.5,-2.5) {$H^2$};
\node at (1.5,-2.5) {$H^1$};
\node at (-2.5,1.5) {$H^1$};
\end{tikzpicture}
\caption{Map of the cohomology of $H^*(\PP^1\times\PP^1;\cO(a S+b F))$, where 
$S$ is a section and $F$ is a fiber.}
\end{figure}

This result is a direct consequence of the following:
\begin{enumerate}[label=\rm($\mathcal{H}$\arabic{enumi})]
\item The space $\dim H^0(\PP^1\times\PP^1;\cO(a S+b F))$
is isomorphic to the space of polynomials of bidegree $(a,b)$ and~\ref{h0-P1P1} follows.

\item Using Serre duality~\ref{h2-P1P1} follows.

\item Using Riemann-Roch and combining the previous results,~\ref{h1-P1P1} follows.
\end{enumerate}

Actually we are only interested in $H^1$ for $a,b\leq 0$ and only for $(a,0),(0,a)$, $a<0$, the contribution is positive.
Let us decompose $D=\mcdhv_v D_v+\mcdhv_h D_h+\mcdhv_m D_m$ where $D_v,D_h,D_m$ are primitive (their multiplicities are coprime), and 
all the components of $D_v$ have bidegree of type $(0,b)$, all the components of $D_h$ have bidegree of type $(a,0)$,
and all the components of $D_h$ have bidegree of type $(a,b)$, $a,b>0$. The following result is well known.

\begin{theorem}
The space $H^1(\tilde X;\cO_{\tilde X})$ is decomposed as a direct sum 
$H^1(\tilde X_v;\cO_{\tilde X_v})\oplus H^1(\tilde X_h;\cO_{\tilde X_h})$ where 
$\pi_v:\tilde X_v\to F$ is the restriction to $F\cong \PP^1$ of the intermediate cover of degree~$\gcd(d,\mcdhv_h,\mcdhv_m)$ and
$\pi_h:\tilde X_h\to S$ is the restriction to $S\cong \PP^1$ of the intermediate cover of degree~$\gcd(d,\mcdhv_v,\mcdhv_m)$.
\end{theorem}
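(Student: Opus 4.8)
The plan is to run the Esnault--Viehweg machinery (Theorem~\ref{thm:ev}) on $X=\PP^1\times\PP^1$ with the normalized data $(d,\tilde D,0)$, where $\tilde D=D-dH=\sum_i m_iD_i$ as in Remark~\ref{rem:Dtilde}, and to read the splitting off directly from the bidegrees of the divisors $L^{(\l)}=\sum_i\floor{\tfrac{\l m_i}{d}}D_i$. First I would record these bidegrees by intersecting with the two rulings: by~\eqref{eq:int_form_Ll},
\[
L^{(\l)}\cdot F=-\sum_i\Big\{\tfrac{\l m_i}{d}\Big\}(D_i\cdot F),
\qquad
L^{(\l)}\cdot S=-\sum_i\Big\{\tfrac{\l m_i}{d}\Big\}(D_i\cdot S),
\]
both of which are $\le 0$. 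Since $F\cdot D_h=0=S\cdot D_v$, the intersection $L^{(\l)}\cdot F$ only involves the components of $D_v$ and $D_m$, while $L^{(\l)}\cdot S$ only involves those of $D_h$ and $D_m$.

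Next I would invoke the cohomology computation for $\PP^1\times\PP^1$ recalled above. Because both intersection numbers are $\le 0$, and since $L^{(\l)}\equiv(L^{(\l)}\cdot F)S+(L^{(\l)}\cdot S)F$, the summand $H^1(\PP^1\times\PP^1,\cO(L^{(\l)}))$ is nonzero \emph{precisely} when exactly one of $L^{(\l)}\cdot F$, $L^{(\l)}\cdot S$ vanishes and the other is $\le -2$: when both vanish $L^{(\l)}\equiv 0$, and when both are negative the class is of type $\cO(a,b)$ with $a,b\le -2$, whose $H^1$ vanishes. This partitions the contributing indices into two \emph{disjoint} sets,
\[
\Lambda_v=\{\l: L^{(\l)}\cdot S=0\},\qquad \Lambda_h=\{\l: L^{(\l)}\cdot F=0\},
\]
and already yields a direct sum decomposition $H^1(\tilde X,\cO_{\tilde X})=\big(\bigoplus_{\l\in\Lambda_v}H^1(\cO(L^{(\l)}))\big)\oplus\big(\bigoplus_{\l\in\Lambda_h}H^1(\cO(L^{(\l)}))\big)$ that respects the $\zeta_d^{\l}$-eigenspaces.

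The heart of the argument is to recognize the two summands as the cohomologies of the announced covers of $\PP^1$. Consider $\l\in\Lambda_v$. By Remark~\ref{rem:Dtilde}, $L^{(\l)}\cdot S=0$ forces $\{\l m_i/d\}=0$ for every component of $D_h$ and $D_m$, so these also drop out of $L^{(\l)}\cdot F$ and the surviving class is supported on $D_v$; the same remark identifies $\Lambda_v$ with the set of multiples of $d/\gcd(d,\nu_h,\nu_m)$, that is, with the indices whose eigenvalues $\zeta_d^{\l}$ are the $\gcd(d,\nu_h,\nu_m)$-th roots of unity. I would then check that this is exactly the Esnault--Viehweg data, via~\eqref{eq:h1-deg}, of the restriction to $F$ of the intermediate cover $\tilde X_v$ of degree $\gcd(d,\nu_h,\nu_m)$: this intermediate cover is unramified along $D_h$ and $D_m$ (their multiplicities are divisible by $\gcd(d,\nu_h,\nu_m)$) and ramified only along $D_v$, and Proposition~\ref{prop:restriccion} computes the branch divisor of $\pi_v$. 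The bridge between the two computations is that the degree of the divisor $\pi_v$ produces on $F$ equals $L^{(\l)}\cdot F$, so the $\PP^1$-cohomology dimension $-L^{(\l)}\cdot F-1$ matches term by term, with the same eigenvalue $\zeta_d^{\l}$. The case $\l\in\Lambda_h$ is symmetric and produces $\tilde X_h$ of degree $\gcd(d,\nu_v,\nu_m)$ over $S$.

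Finally, since the monodromy acts by $\zeta_d^{\l}$ on the $\l$-th piece on both sides, the identification is equivariant; as $\{-x\}=0\Leftrightarrow\{x\}=0$, the sets $\Lambda_v,\Lambda_h$ are stable under $\l\mapsto d-\l$, so the Hodge statement follows from $H^0(\tilde X,\Omega^1_{\tilde X})\cong\overline{H^1(\tilde X,\cO_{\tilde X})}$ together with the analogous identity on each $\PP^1$-cover. The hard part will be precisely the identification in the previous paragraph: one must verify rigorously that the degree-$\gcd(d,\nu_h,\nu_m)$ intermediate cover is ramified exactly along $D_v$, and that after restriction to $F$ its Esnault--Viehweg summands coincide with the $\Lambda_v$-summands in matching eigenvalues --- in other words, that the mixed components $D_m$ cancel in the correct direction. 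This bookkeeping of multiplicities, rather than any new cohomological input, is the delicate point.
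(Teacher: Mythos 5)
Your overall strategy --- run Esnault--Viehweg on $(d,\tilde D,0)$, write $L^{(\l)}\equiv(L^{(\l)}\cdot F)\,S+(L^{(\l)}\cdot S)\,F$, observe that both coordinates are $\le 0$, and split the contributing indices $\l$ according to which coordinate vanishes --- is the right one; the paper offers no proof here (the result is quoted as well known), but this is exactly the argument it runs for the general case in Theorem~\ref{thm:gencase}. There is, however, one concrete error: the identity ``$F\cdot D_h=0=S\cdot D_v$'' is backwards. With the paper's convention that bidegree $(a,b)$ corresponds to the class $aS+bF$, the components of $D_v$ lie in the class of $F$ and those of $D_h$ in the class of $S$, so $F\cdot D_v=0=S\cdot D_h$, while $F$ meets every component of $D_h\cup D_m$ and $S$ meets every component of $D_v\cup D_m$. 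Consequently $L^{(\l)}\cdot F$ is governed by $D_h$ and $D_m$ and vanishes iff $\frac{d}{\gcd(d,\nu_h,\nu_m)}\mid\l$, whereas $L^{(\l)}\cdot S$ is governed by $D_v$ and $D_m$ and vanishes iff $\frac{d}{\gcd(d,\nu_v,\nu_m)}\mid\l$ --- the opposite of what you wrote.

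Propagating the correction interchanges the two target curves in your conclusion. The intermediate cover of degree $\gcd(d,\nu_h,\nu_m)$ is branched only along $D_v$, a union of fibers, so its restriction to another fiber $F$ is unramified and disconnected, with trivial $H^1$; the nontrivial restriction is to the section $S$, which meets those fibers. Test case: $d=2$, $D$ four distinct fibers, $H\sim 2F$; then $\tilde X\cong E\times\PP^1$ with $E$ elliptic, and $H^1(\tilde X,\cO_{\tilde X})\cong H^1(E,\cO_E)$ is recovered by restricting the double cover to a section, while its restriction to a fiber is two disjoint copies of $\PP^1$. So the correct pairing is (degree $\gcd(d,\nu_h,\nu_m)$, restrict to $S$) and (degree $\gcd(d,\nu_v,\nu_m)$, restrict to $F$) --- which is also the pairing used in Theorem~\ref{thm:gencase}, where $\Hv$ is the restriction of the intermediate $\gcd(d,\nu_h,\nu_s)$-cover to the \emph{section}~$E$. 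Your two slips cancel, so your final answer reproduces the theorem as printed, but the printed statement itself appears to carry the same transposition of $F$ and $S$. Everything else in your argument is sound: the contributing index sets are indeed disjoint, the values $-(a+1)(b+1)$ on the two axes match $h^1$ of the restricted divisors on $\PP^1$ via Proposition~\ref{prop:restriccion}, and the equivariance and Hodge statements follow as you say; only note that ``both coordinates negative'' already forces $H^1=0$, without needing them to be $\le -2$.
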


\section{Reducible normal fake quadrics}\label{sec:settings}

\subsection{A ramified covering of the projective line}\label{sec:cover-P1} \mbox{}

For the sequel we need to define an orbifold $\orb$, with the following data. 
The orbifold is supported by $\PP^1=\CC\cup\{\infty\}$ and the orbifold points are $\gamma_1,\dots,\gamma_r\in\CC\subset\PP^1$, $r\geq 0$,
of orders $d_1,\dots,d_r\in\ZZ_{>1}$. Let us consider also $q_i\in%
\{1,\dots,d_i-1\}$, 
$\gcd(d_i,q_i)=1$, $i=1,\dots,r$
such that
\begin{equation}\label{eq:alpha}
\alpha=\sum_{i=1}^r\frac{q_i}{d_i}\in\ZZ.
\end{equation}
This imposes strong conditions on $d_1,\dots,d_r$. For instance,
\begin{equation}\label{eq:lmc-r-1}
d_r\text{ divides } \lcm(d_1,\dots,d_{r-1}).
\end{equation}
In particular if $\vdeg:=\lcm(d_1,\dots,d_r)$, then $\vdeg^2$~divides $d_1\cdot\ldots\cdot d_r$ (see Remark~\ref{rem:prop:class} in page~\pageref{rem:prop:class}). 

There is an orbifold $\vdeg$-cyclic covering $\tau:G\to\orb$ associated to the epimorphism (well-defined from \eqref{eq:alpha})
\[
\begin{tikzcd}[row sep=0pt,/tikz/column 1/.append style={anchor=base east},/tikz/column 2/.append style={anchor=base west}]
\pi_1^{\textrm{orb}}(\orb)=\langle\mu_1,\dots,\mu_r\mid \mu_1\cdot\ldots\cdot\mu_r=1, \mu_1^{d_1}=\dots=\mu_r^{d_r}=1\rangle\ar[r]&\ZZ/\vdeg, \\
\mu_i\ar[r,mapsto]&q_i\dfrac{\vdeg}{d_i} \bmod \vdeg;
\end{tikzcd}
\]
the covering $\tau$ has been introduced in the Introduction.
The position of the orbifold points has an influence
on the analytic type of $G$ but not on its topological type. The following result is a direct
consequence of the definition of a cover associated with an epimorphism onto a cyclic group.

\begin{lemma}
There exists a unique generator $\eta:G\to G$ of the monodromy of $\tau$ such that for any 
$i\in\{1,\dots,r\}$ and $p\in\tau^{-1}(\gamma_i)$, there exists 
a local coordinate $y$ of $G$ centered at $p$ such that 
\[
\eta^{\frac{\vdeg}{d_i}}(y)=\zeta_{d_i}^{q_i}y.
\]
\end{lemma}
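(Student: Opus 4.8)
The plan is to construct the generator $\eta$ explicitly from the covering data and then verify the stated local behavior point by point. The covering $\tau\colon G\to\orb$ comes with a monodromy action of $\ZZ/\vdeg$, which is cyclic; hence its group of deck transformations is cyclic of order $\vdeg$, and any two generators differ by an invertible power. So the content of the lemma is twofold: first, that a generator with the prescribed local eigenvalues exists at all, and second, that it is unique. I would treat existence first, then pin down uniqueness from the normalization imposed by the exponents $q_i$.

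\textbf{Existence.} First I would recall how the cover is assembled from the epimorphism
$\rho\colon\pi_1^{\mathrm{orb}}(\orb)\to\ZZ/\vdeg$, $\mu_i\mapsto q_i\frac{\vdeg}{d_i}\bmod\vdeg$.
The group $\ZZ/\vdeg$ acts on $G$ by deck transformations, and I would fix the generator $\eta$ to be the deck transformation corresponding to $1\in\ZZ/\vdeg$ under this identification. Now fix $i$ and a point $p\in\tau^{-1}(\gamma_i)$. Locally near $\gamma_i$ the orbifold chart is $\frac{1}{d_i}(1)$, i.e.\ a disk with the $\ZZ/d_i$-action generated by $\mu_i$, and the restricted cover $\tau$ over a small disk around $\gamma_i$ is the connected cyclic cover determined by the image of the local meridian, which is $q_i\frac{\vdeg}{d_i}$. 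The stabilizer of $p$ in $\ZZ/\vdeg$ is the cyclic subgroup generated by $\frac{\vdeg}{d_i}$ (of order $d_i$), since $\gcd(q_i,d_i)=1$ forces $q_i\frac{\vdeg}{d_i}$ to generate exactly this subgroup. Choosing a local uniformizer $y$ at $p$ that trivializes the branched cover, I would compute that the subgroup element $\frac{\vdeg}{d_i}\in\ZZ/\vdeg$ acts on $y$ as multiplication by some primitive $d_i$-th root of unity; matching this against the exponent $q_i$ prescribed by $\rho$ gives $\eta^{\vdeg/d_i}(y)=\zeta_{d_i}^{q_i}y$ after, if necessary, rescaling $y$ by a constant (this rescaling is the one genuine freedom, and it does not affect the eigenvalue).

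\textbf{Uniqueness.} Any generator of the cyclic deck group has the form $\eta^{k}$ with $\gcd(k,\vdeg)=1$, and then $(\eta^k)^{\vdeg/d_i}(y)=\zeta_{d_i}^{kq_i}y$. Demanding that the local eigenvalue be exactly $\zeta_{d_i}^{q_i}$ for every $i$ forces $kq_i\equiv q_i\pmod{d_i}$, i.e.\ $k\equiv 1\pmod{d_i}$, for all $i$. Since $\vdeg=\lcm(d_1,\dots,d_r)$, the simultaneous congruences give $k\equiv 1\pmod{\vdeg}$, so $\eta^k=\eta$; this yields uniqueness.

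\textbf{The main obstacle} I anticipate is the bookkeeping in the existence step: one must be careful that the single element $\frac{\vdeg}{d_i}\in\ZZ/\vdeg$ really generates the full stabilizer and acts with the primitive eigenvalue $\zeta_{d_i}^{q_i}$ rather than some other $d_i$-th root, and that a \emph{single} local coordinate $y$ simultaneously realizes this normalization for the chosen $p$. The coprimality $\gcd(q_i,d_i)=1$ and the cyclic structure of the local cover make this transparent, but it requires identifying the local monodromy representation with the image of the meridian under $\rho$ correctly; once that identification is in place, the congruence argument for uniqueness is immediate from the definition of $\vdeg$ as the least common multiple.
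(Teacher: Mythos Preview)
Your proposal is correct and aligns with the paper's approach: the paper offers no proof beyond the remark that the lemma ``is a direct consequence of the definition of a cover associated with an epimorphism onto a cyclic group,'' and your argument is precisely an unpacking of that assertion---identifying $\eta$ with the deck transformation corresponding to $1\in\ZZ/\vdeg$, reading off the local eigenvalue from the image $\rho(\mu_i)=q_i\vdeg/d_i$, and then using $\vdeg=\lcm(d_1,\dots,d_r)$ with $\gcd(q_i,d_i)=1$ for the uniqueness congruence. The only point to tighten is the existence step: rescaling $y$ does not change the eigenvalue (as you note), so the claim that the eigenvalue is exactly $\zeta_{d_i}^{q_i}$ rather than, say, $\zeta_{d_i}^{q_i^{-1}}$ rests on a sign convention in the identification between deck transformations and the image of $\rho$; once that convention is fixed consistently, the computation goes through as you indicate.
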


Using Riemann-Hurwitz method, one can compute the genus of $G$:
\[
2-2g(G)=\chi(G)=\vdeg(2-r)+\sum_{i=1}^r \frac{\vdeg}{d_i}=\vdeg\chi^{\textrm{orb}}
\]
where
\begin{equation}\label{eq:beta}
\chi^{\textrm{orb}}:=\chi^{\textrm{orb}}(\orb)=2-\sum_{i=1}^r\left(1-\frac{1}{d_i}\right) \in \ZZ \frac{1}{\vdeg}. 
\end{equation}

\begin{lemma}
With the previous notations, if $r>2$, then $g(G)>0$ and $\chi^{\textrm{orb}}\leq 0$.
\end{lemma}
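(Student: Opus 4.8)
The plan is to reduce both assertions to the single inequality $\chi^{\textrm{orb}}\le 0$. From the Riemann--Hurwitz computation $2-2g(G)=\vdeg\,\chi^{\textrm{orb}}$ and the fact that $\vdeg=\lcm(d_1,\dots,d_r)>0$, the condition $\chi^{\textrm{orb}}\le 0$ is equivalent to $2-2g(G)\le 0$, i.e.\ to $g(G)\ge 1$. So it suffices to prove $\chi^{\textrm{orb}}\le 0$, and the genus bound $g(G)>0$ follows at once (even the boundary case $\chi^{\textrm{orb}}=0$ gives $g(G)=1$). Rewriting \eqref{eq:beta}, this amounts to showing $\sum_{i=1}^r\left(1-\frac{1}{d_i}\right)\ge 2$.

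For $r\ge 4$ no arithmetic input is needed: since every $d_i\ge 2$ we have $1-\frac{1}{d_i}\ge\frac12$, so the sum is at least $\frac{r}{2}\ge 2$. The only delicate case is $r=3$, which is where I expect the main obstacle to lie, because the naive bound only yields $\sum\left(1-\frac{1}{d_i}\right)\ge\frac32$. Here the integrality condition \eqref{eq:alpha} is essential: the triple $(2,2,2)$ satisfies $\sum\frac{1}{d_i}=\frac32>1$ yet is excluded precisely because $\alpha=\frac32\notin\ZZ$.

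So for $r=3$ I would argue by contradiction, assuming $\frac{1}{d_1}+\frac{1}{d_2}+\frac{1}{d_3}>1$. Ordering $d_1\le d_2\le d_3$, this forces $d_1=2$ and leaves the finite list of spherical triples $(2,2,k)$ with $k\ge 2$, together with $(2,3,3)$, $(2,3,4)$ and $(2,3,5)$; each must be ruled out using $\alpha\in\ZZ$. For $(2,2,k)$ one has $q_1=q_2=1$, so $\alpha=1+\frac{q_3}{k}$ with $0<q_3<k$ and $\gcd(q_3,k)=1$, which is never an integer; this disposes of the whole family, $(2,2,2)$ included. For the three remaining triples I would invoke the divisibility recorded in \eqref{eq:lmc-r-1} (valid for each index since the labeling of the orbifold points is arbitrary), equivalently a $2$-adic valuation count: in $(2,3,3)$ and $(2,3,5)$ the prime $2$ divides exactly one $d_i$, so $v_2(\alpha)=-1$, while in $(2,3,4)$ the factor $4$ is the unique term of maximal $2$-valuation and $v_2(\alpha)=-2$; in all three cases $v_2(\alpha)<0$ contradicts $\alpha\in\ZZ$. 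This exhausts the possibilities, giving $\sum\frac{1}{d_i}\le 1$, hence $\chi^{\textrm{orb}}\le 0$ and therefore $g(G)>0$.
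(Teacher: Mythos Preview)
Your proof is correct and follows essentially the same route as the paper: reduce to $\chi^{\textrm{orb}}\le 0$, dispose of $r\ge 4$ by the trivial bound $1-\tfrac{1}{d_i}\ge\tfrac12$, and for $r=3$ enumerate the spherical triples $(2,2,k)$, $(2,3,3)$, $(2,3,4)$, $(2,3,5)$ and eliminate each via the integrality condition~\eqref{eq:alpha}. The paper simply asserts that none of these satisfy~\eqref{eq:alpha}, whereas you supply the verification (the $2$-adic valuation count, or equivalently the divisibility~\eqref{eq:lmc-r-1} applied to a well-chosen index), so your write-up is in fact more explicit than the original.
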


\begin{proof}
It is enough to prove that $\chi^{\textrm{orb}}\leq 0$. Since $r>2$ and
\[
\chi^{\textrm{orb}}\leq \frac{r}{2}+2-r=\frac{4-r}{2},
\]
it is enough to rule out the case $r=3$. 

In that case $\chi^{\textrm{orb}}$ can be positive only if 
$(d_1,d_2,d_3)$ is one of the following $(2,2,n)$, $(2,3,3)$, $(2,3,4)$, or $(2,3,5)$,
but none of them satisfy~\eqref{eq:alpha}.
\end{proof}

\subsection{Definition and description of a reducible normal fake quadric} \mbox{} \label{subsec:S}

Recall the classical definition of a \emph{fake quadric} as a smooth projective surface with the same rational 
cohomology as, but not biholomorphic to, the quadric surface $\PP^1\times\PP^1$. Examples of fake quadrics are 
quotients of a product of two smooth projective curves by a free diagonal action, such surfaces are called 
\emph{reducible} (also called isogenous to a product of curves of an \emph{unmixed} type, 
see~\cite{Cat-fibred,Bauer-Cat-Grunewald-classification}). 
We extend this concept to \emph{reducible normal fake quadrics}, where the freeness condition is dropped.

Let us consider the following $(\ZZ/\vdeg)^2$-Galois  and $\ZZ/\vdeg$-Galois covers:
\[
\begin{tikzcd}[row sep=1,/tikz/column 1/.append style={anchor=base east},/tikz/column 2/.append style={anchor=base west}]
&S:=(G\times\PP^1)/(\ZZ/\vdeg)\ar[rd]&\\[20pt]
G\times\PP^1\ar[rr]\ar[ru,"\tau_2"]&&\PP^1\times\PP^1\\
(p,z)\ar[rr,mapsto]&&(\tau(p),z^\vdeg).
\end{tikzcd}
\]
The action defining~$S$ is given by a diagonal action
\begin{equation}\label{eq:diag}
(1\bmod \vdeg,(p,z))\longmapsto(\eta(p),\zeta_\vdeg^{-1}z).
\end{equation}

\begin{defi}\label{def:rnfq}
The surface $S:=(G\times\PP^1)/(\ZZ/\vdeg)$ with the action~\eqref{eq:diag} is called the
\emph{reducible normal fake quadric} associated with~$\tau$.
\end{defi}

\begin{figure}[ht]
﻿\begin{tikzpicture}
\draw (-2,-1) -- (2,-1) node[right] {$C$} ;
\draw (-2,1) -- (2,1)  node[right] {$E$};
\draw (-1,-1.5) -- (-1,1.5)  node[left] {$F_1$};
\draw (1,-1.5) -- (1,1.5)  node[right] {$F_r$};
\node at (0,0) {$\dots$};
\node at (2,0) {$\mathbb{P}^1\times\mathbb{P}^1$};

\draw[->] (-2.5,3)--(0,1.5);
\draw[->] (-6,1.5)--(-3.5,3) node[above,pos=.5] {$\tau_2$};
\draw[->] (-4.5,0)-- (-1.5,0);

\begin{scope}[xshift=-6cm]
\draw (-2,-1) -- (2,-1) node[right] {$C$};
\draw (-2,1) -- (2,1)  node[right] {$E$};
\draw (-1,-1.5) -- (-1,1.5)  node[left] {$F_1$};
\draw (1,-1.5) -- (1,1.5)  node[right] {$F_r$};
\node at (0,0) {$\dots$};
\node at (-2,0) {$G\times\mathbb{P}^1$};
\end{scope}
\begin{scope}[xshift=-3cm, yshift=5cm]
\draw (-2,-1) -- (2,-1) node[right] {$C$} node[left,pos=0] {$0$};
\draw (-2,1) -- (2,1)  node[below right] {$E$}node[below left,pos=0] {$0$};
\draw (-1,-1.5) -- (-1,1.5)  node[right, pos=.5] {$A_1$};
\draw (1,-1.5) -- (1,1.5)  node[left,pos=.5] {$A_r$};
\node at (0,0) {$\dots$};
\node at (-2,0) {$S$};
\fill (-1,-1) circle [radius=.1cm] ;
\fill (1,-1) circle [radius=.1cm] ;
\fill (-1,1) circle [radius=.1cm] ;
\fill (1,1) circle [radius=.1cm] ;
\node[above left] at (-1,1) {$\frac{1}{d_1}(1,q_1)$};
\node[above right] at (1,1) {$\frac{1}{d_r}(1,q_r)$};
\node[below left] at (-1,-1) {$\frac{1}{d_1}(1,-q_1)$};
\node[below right] at (1,-1) {$\frac{1}{d_r}(1,-q_r)$};
\end{scope}
\end{tikzpicture}
 \caption{Covering construction of $S$.}
\label{fig:S2}
\end{figure}
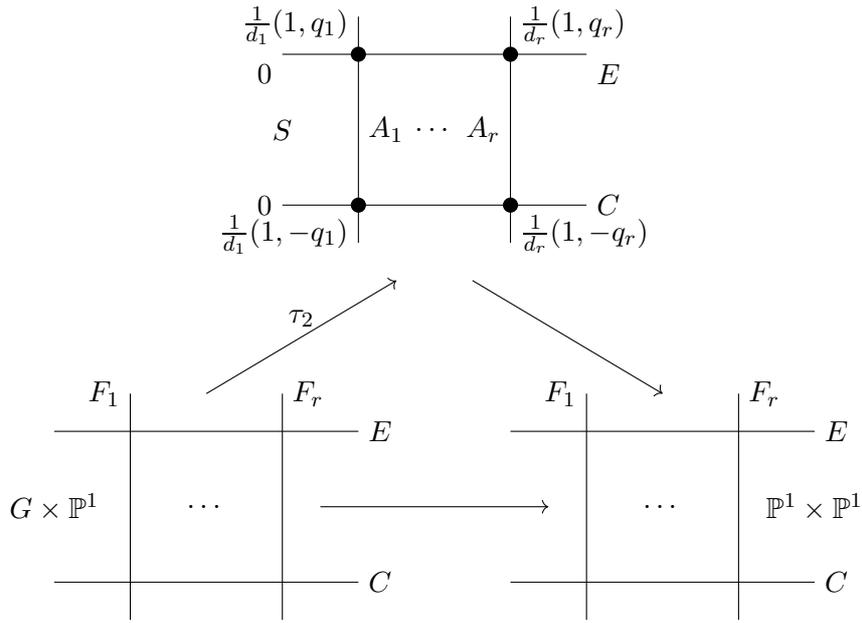

The reason for the name \emph{reducible normal fake quadric} will be cleared from the following description.

\begin{lemma}
\label{lemma:S}
Let $S$ be the reducible normal fake quadric associated with~$\tau$.
It is a normal ruled surface $\pi_S:S\to\PP^1$, $[(p,z)]\mapsto \tau(p)$ with two sets 
$\sing(C):=\{P_1,\dots,P_r\}$ and $\sing(E):=\{Q_1,\dots,Q_r\}$
of singular points (of cyclic quotient type). The following holds:
\begin{enumerate}[label=\rm(S\arabic{enumi})]
\item\label{lemma:S:1} The curves $C:=\tau_2(G\times\{0\})$ and $E:=\tau_2(G\times\{\infty\})$ are \emph{sections}
of $\pi_S$ with self-intersection~$0$.
\item\label{lemma:S:2} There are \emph{fibers} $A_i$ of $\pi_S$ such that $\{P_i\}=A_i\cap C$ and $\{Q_i\}=A_i\cap E$.
\item\label{lemma:S:3} The type of $P_i$ is $\frac{1}{d_i}(1,-q_i)$ and the type of $Q_i$ is $\frac{1}{d_i}(1,q_i)$.
\end{enumerate}
\end{lemma}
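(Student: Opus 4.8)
The plan is to work directly with the quotient presentation $S=(G\times\PP^1)/(\ZZ/\vdeg)$ and to analyze the diagonal action \eqref{eq:diag}. Since $G\times\PP^1$ is smooth and $\ZZ/\vdeg$ is finite, the quotient $S$ is normal, and the assignment $\pi_S([(p,z)])=\tau(p)$ is well defined because $\eta$ is a deck transformation of $\tau$, so $\tau\circ\eta=\tau$. First I would determine the non-free locus of the action: an element $\eta^k$ fixes $(p,z)$ if and only if $\eta^k(p)=p$ and $\zeta_\vdeg^{-k}z=z$. The second equation forces $z\in\{0,\infty\}$ for nontrivial $k$, and the first then forces $p$ to lie over an orbifold point $\gamma_i$ with $\vdeg/d_i\mid k$ (the stabilizer of $p\in\tau^{-1}(\gamma_i)$ being $\langle\eta^{\vdeg/d_i}\rangle$ of order $d_i$). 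Hence the action is free in codimension one, its non-free locus being the finite set of points $(p,0)$ and $(p,\infty)$ with $p\in\tau^{-1}(\gamma_i)$; consequently $\tau_2$ is \'etale in codimension one and $S$ has only isolated cyclic quotient singularities, located exactly at the images $P_i:=[(p,0)]$ and $Q_i:=[(p,\infty)]$.

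For the singularity types \ref{lemma:S:3}, I would use the local coordinate $y$ at $p\in\tau^{-1}(\gamma_i)$ provided by the preceding lemma, for which the generator $g:=\eta^{\vdeg/d_i}$ of the order-$d_i$ stabilizer satisfies $g\cdot y=\zeta_{d_i}^{q_i}y$. Near $P_i$ the coordinate $z$ transforms as $g\cdot z=\zeta_\vdeg^{-\vdeg/d_i}z=\zeta_{d_i}^{-1}z$, so the local action is $(y,z)\mapsto(\zeta_{d_i}^{q_i}y,\zeta_{d_i}^{-1}z)$, i.e.\ of type $\tfrac{1}{d_i}(q_i,-1)$; replacing the chosen generator by $\zeta_{d_i}\mapsto\zeta_{d_i}^{-1}$ and swapping coordinates identifies this with $\tfrac{1}{d_i}(1,-q_i)$. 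Near $Q_i$, using $w=1/z$ one gets $g\cdot w=\zeta_{d_i}w$, hence type $\tfrac{1}{d_i}(q_i,1)\cong\tfrac{1}{d_i}(1,q_i)$, as claimed.

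The ruled structure and \ref{lemma:S:1}--\ref{lemma:S:2} then follow from the geometry of the fibration. The generic fiber $\pi_S^{-1}(\gamma)$ equals $(\tau^{-1}(\gamma)\times\PP^1)/(\ZZ/\vdeg)$; since $\ZZ/\vdeg$ permutes the $\vdeg$ points of $\tau^{-1}(\gamma)$ freely and transitively, this quotient is a single $\PP^1$, so $\pi_S$ is a ruling. The curve $C=\tau_2(G\times\{0\})$ equals $(G\times\{0\})/(\ZZ/\vdeg)\cong G/\langle\eta\rangle\cong\PP^1$ and $\pi_S|_C$ is the identity, so $C$ (and likewise $E$) is a section. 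For the self-intersection, since $\tau_2$ is \'etale in codimension one we have $\tau_2^*C=G\times\{0\}$ with multiplicity one; the projection formula for the degree-$\vdeg$ map then gives $\vdeg\,(C^2)=(G\times\{0\})^2=0$, whence $C^2=E^2=0$. Finally, over $\gamma_i$ the fiber is $A_i=\tau_2(\tau^{-1}(\gamma_i)\times\PP^1)$, again a single $\PP^1$, meeting $C$ only at $[(p,0)]=P_i$ and $E$ only at $[(p,\infty)]=Q_i$, which yields \ref{lemma:S:2}.

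The main obstacle I anticipate is the careful bookkeeping at the singular points: verifying that the non-free locus is genuinely of codimension $\geq 2$ (so that $\tau_2$ is unramified in codimension one, which both legitimizes the projection-formula computation of the self-intersections and guarantees that the singularities are isolated), and correctly normalizing the raw local weights $\tfrac{1}{d_i}(q_i,\mp 1)$ into the stated forms $\tfrac{1}{d_i}(1,\mp q_i)$ via the permitted change of generator and coordinate swap.
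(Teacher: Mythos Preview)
Your proof is correct and follows essentially the same approach as the paper's, which computes the local action $(y,z)\mapsto(\zeta^{q_i}y,\zeta^{-1}z)$ at $P_i$ and passes to $z^{-1}$ at $Q_i$, while declaring the self-intersection ``straightforward''. You simply fill in more detail---explicitly identifying the non-free locus, justifying the ruled structure fiberwise, and computing $C^2=0$ via the projection formula---but the ideas are identical.
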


\begin{proof}
Figure~\ref{fig:S2} describes $S$ as a middle cover. We can identify $C$ and $E$ with
$\CC\cup\{\infty\}\equiv\PP^1$ where $P_i$ and $Q_i$ become $\gamma_i$. Let $p\in G$ such that
$P_i=[(p,0)]$. A neighborhood of $P_i$ in $S$ is isomorphic to a neighborhood of
the origin in $\CC^2/\mu_{d_i}$, where $\mu_{d_i}=\langle \zeta\rangle$ is the cyclic group of $d_i$-roots
of unity in $\CC^*$ and the action is defined by
\[
\zeta\cdot(y,z)=(\zeta^{q_i}y,\zeta^{-1} z),
\]
and thus the type of $P_i$ as a quotient singular point is calculated. Since $z^{-1}$ is a
local coordinate at $\infty$, the type of $Q_i$ is computed in the same way.
The self-intersection computation is straightforward.
\end{proof}

\begin{remark}\label{rem:dif_dq}
The surface $S$ does not determine the original data given by $(d_1,\dots,d_r)$ and $(q_1,\dots,q_r)$.
For instance, interchanging $0$ and $\infty$ in $\PP^1$ and choosing $\eta^{-1}$ as a 
generator of the monodromy of $\tau$ results in the same surface~$S$, which is associated with 
the data $(d_1,\dots,d_r)$, $q_i':=d_i-q_i$, and $\alpha':=r-\alpha$. 
However, note that in general, replacing $\eta$ by $\eta^\ell$ with $\gcd(\ell,\vdeg)=1$ does not result 
in an isomorphic surface to~$S$.
\end{remark}

\subsection{An alternative construction}
\label{sec:alternative}
\mbox{}

There is an alternative description of reducible normal fake quadrics in terms of generalized Nagata
operations of ruled surfaces. 
Consider a reducible normal fake quadric $S$ as above and $(d_i,q_i)$, $i=1,...,r$ such that 
$\alpha:=\sum_{i=1}^r \frac{q_i}{d_i}\in\ZZ$ as in Lemma~\ref{lemma:S}.

\begin{lemma}
\label{lemma:S:Nagata}
The surface $S$ and the smooth ruled surface $\Sigma_\alpha$ both have a common weighted blown-up
space obtained as follows.
\begin{itemize}
\item From $S$: composition of $(1,q_i)$-weighted blow-ups of $Q_i$.
\item From $\Sigma_\alpha$: composition of $(d_i,q_i)$-weighted blow-ups at points in a section with
self-intersection~$\alpha$.
\end{itemize}
\end{lemma}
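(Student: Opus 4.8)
The plan is to prove the statement fibrewise over the base and then glue. Both $S$ and $\Sigma_\alpha$ are ruled surfaces over $\orb\cong\PP^1$ carrying two disjoint sections ($C,E$ with $C^2=E^2=0$ for $S$, and the sections of self-intersection $\pm\alpha$ for $\Sigma_\alpha$), and every prescribed modification is supported on the fibres over the finitely many points $\gamma_1,\dots,\gamma_r$. Over $\PP^1\setminus\{\gamma_1,\dots,\gamma_r\}$ both surfaces are $\PP^1$-bundles with two disjoint sections, and since this base is affine the bundles are trivial; so first I would fix an isomorphism of the two restrictions identifying $C$ with the section of self-intersection $\alpha$ and $E$ with the section of self-intersection $-\alpha$. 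This reduces the lemma to producing, for each $i$, an isomorphism of the two blown-up germs over a disc $\Delta_i$ around $\gamma_i$ that is compatible with this trivialisation on the punctured disc.

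The heart of the argument is then a toric computation on each $\Delta_i$. I would model $S|_{\Delta_i}$ as a toric surface whose fibre $A_i$ joins the two sections through the cyclic quotient points $P_i=\frac{1}{d_i}(1,-q_i)$ on $C$ and $Q_i=\frac{1}{d_i}(1,q_i)$ on $E$ of Lemma~\ref{lemma:S}, and $\Sigma_\alpha|_{\Delta_i}$ as the smooth toric chart in which the section meets the fibre transversally at $R_i$. Each prescribed weighted blow-up is a star subdivision of the corresponding fan: the $(1,q_i)$-subdivision of the cone at $Q_i$ on the $S$-side, and the $(d_i,q_i)$-subdivision of the smooth cone at $R_i$ on the $\Sigma_\alpha$-side. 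I claim the common space $\hat X$ is the one retaining exactly the singular cone $\frac{1}{d_i}(1,-q_i)=P_i$ over each $\gamma_i$ and smooth elsewhere: from $S$ it is reached by clearing the cones over $E$ (i.e.\ fully resolving $Q_i$, whose first step is the $(1,q_i)$-blow-up), and from $\Sigma_\alpha$ by the $(d_i,q_i)$-blow-up followed by resolving the residual $\frac{1}{q_i}$-point it creates. The computation to check is that these two subdivided fans coincide: both acquire the same chain of rays, in which the strict transform of the $S$-fibre is identified with the exceptional ray of the $(d_i,q_i)$-blow-up as a single $(-1)$-curve, and the equality of the remaining rays is precisely the Hirzebruch--Jung continued-fraction identity for the expansion of $d_i/q_i$ attached to the shared vertex $P_i$. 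This is the step I expect to be the main obstacle, and the reason the statement must speak of a \emph{composition} of weighted blow-ups: a single weighted blow-up on each side does not suffice as soon as $q_i>1$, since it leaves the residual $\frac{1}{q_i}$-point sitting on different components (in the middle of the chain on the $S$-side, at the end on the $\Sigma_\alpha$-side), and one has to run both resolution procedures far enough before the fans agree.

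Finally I would carry out the global bookkeeping. The self-intersection of the section surviving on each side must match: the $(d_i,q_i)$-blow-ups drop the self-intersection of the section through the $R_i$ by $\sum_i q_i/d_i=\alpha$, turning it into $\alpha-\alpha=0=C^2$, while the opposite section keeps self-intersection $-\alpha=E^2$. This is exactly where the hypothesis $\alpha\in\ZZ$ of Lemma~\ref{lemma:S} enters, pinning down $\Sigma_\alpha$ as the correct Hirzebruch surface and fixing the pairing of the two sections. Having matched the fans over every $\Delta_i$ and the sections globally, I would glue the local isomorphisms with the fixed trivialisation over the punctured base to obtain a global isomorphism of the two blown-up surfaces, and then read off the two contraction morphisms $\hat X\to S$ and $\hat X\to\Sigma_\alpha$ as the asserted compositions of weighted blow-downs.
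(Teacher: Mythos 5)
There is a genuine gap: you have misidentified the common blown-up space, and as a consequence both your local fan computation and your claim about what ``composition'' means are set up incorrectly. In the lemma, ``composition'' ranges over the index $i=1,\dots,r$ (one weighted blow-up per point), not over an iterated resolution at a single point. The common space $\hat{S}$ is obtained from $S$ by a \emph{single} $(1,q_i)$-weighted blow-up at each $Q_i$, and it is \emph{not} smooth away from the points $P_i$: besides the untouched singularities $\frac{1}{d_i}(1,-q_i)$ at $A_i\cap C$, it acquires a residual singular point of type $\frac{1}{q_i}(1,-d_i)$ at $F_i\cap A_i$, and the strict transform $A_i$ has self-intersection $-\frac{1}{d_iq_i}$ (not $-1$; it is not a $(-1)$-curve of a smooth model). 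Your assertion that a single weighted blow-up on each side cannot suffice when $q_i>1$ because the residual $\frac{1}{q_i}$-points ``sit on different components'' is false: on the $S$-side the $(1,q_i)$-blow-up at $Q_i$ places the $\frac{1}{q_i}(1,-d_i)$ point at $F_i\cap A_i$, and on the $\Sigma_\alpha$-side the $(d_i,q_i)$-blow-up at $F_i\cap C$ places its $\frac{1}{q_i}$-singularity at exactly the same intersection $F_i\cap A_i$ and its $\frac{1}{d_i}(1,-q_i)$-singularity at $A_i\cap C$. The whole point of the lemma is that the local configuration around $A_i$ in $\hat{S}$ --- the two quotient points and the self-intersection $-\frac{1}{d_iq_i}$ --- is precisely that of the exceptional divisor of a single $(d_i,q_i)$-weighted blow-up at a smooth point, so that contracting $A_i$ yields a smooth ruled surface, identified as $\Sigma_\alpha$ by the self-intersection bookkeeping $(E^2)_{\hat S}=-\sum_i q_i/d_i=-\alpha$. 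This is the computation your proof must contain and currently replaces by a Hirzebruch--Jung resolution argument that proves a different (and, as described, internally inconsistent) statement about a further blown-up model.

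Your global framing --- trivializing both ruled surfaces over the punctured base, matching the two pairs of disjoint sections, and reducing to a local toric check over a disc around each $\gamma_i$ --- is sound and would work if the local check were the right one. To repair the proof, keep that framing but replace the ``fully resolve and compare fans'' step by the star subdivision corresponding to one $(1,q_i)$-blow-up of the cone of $\frac{1}{d_i}(1,q_i)$ at $Q_i$ on one side and one $(d_i,q_i)$-subdivision of the smooth cone at $F_i\cap C$ on the other, and verify directly that the two subdivided fans (including the residual $\frac{1}{q_i}(1,-d_i)$ cone) coincide; the self-intersection computation $(F_i^2)_{\hat S}=-\frac{d_i}{q_i}$, $(A_i^2)_{\hat S}=-\frac{1}{d_iq_i}$ then pins everything down.
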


\begin{proof}
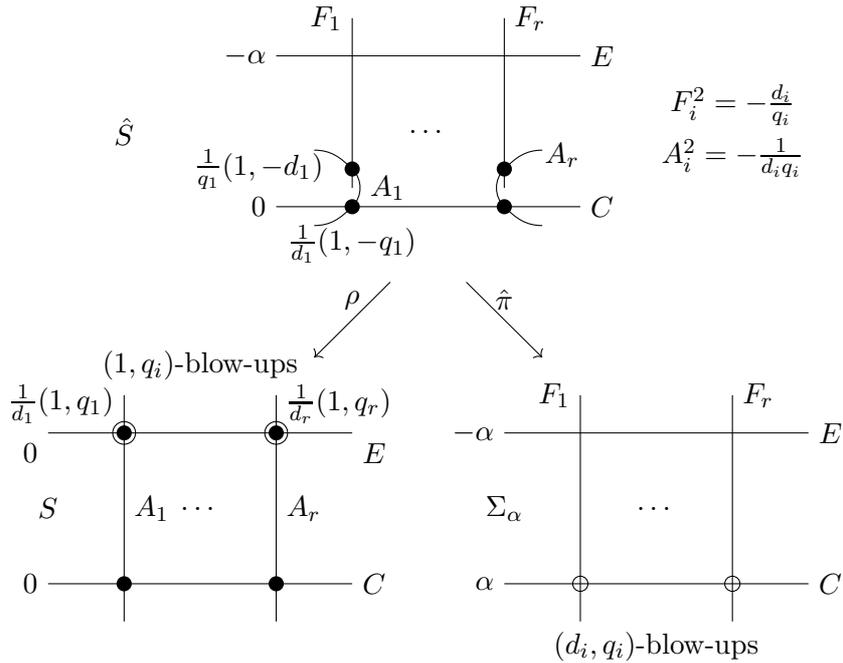
\begin{figure}[ht]
﻿\begin{tikzpicture}
\draw (-2,-1) -- (2,-1) node[right] {$C$} node[left,pos=0] {$\alpha$};
\draw (-2,1) -- (2,1)  node[right] {$E$}node[left,pos=0] {$-\alpha$};
\draw (-1,-1.5) -- (-1,1.5)  node[left] {$F_1$};
\draw (1,-1.5) -- (1,1.5)  node[right] {$F_r$};
\node at (0,0) {$\dots$};
\node at (-2,0) {$\Sigma_\alpha$};
\draw (-1,-1) circle [radius=.1cm] ;
\node[below] at (0,-1.5) {$(d_i,q_i)$-blow-ups};
\draw (1,-1) circle [radius=.1cm] ;

\draw[->] (-2.5,3)--(-1.5,2) node[above,pos=.5] {$\hat{\pi}$};
\draw[->] (-3.5,3)--(-4.5,2) node[above,pos=.5] {$\rho$};

\begin{scope}[xshift=-3cm, yshift=5cm]
\draw (-2,-1) -- (2,-1) node[right] {$C$} node[left,pos=0] {$0$};
\draw (-2,1) -- (2,1)  node[right] {$E$}node[left,pos=0] {$-\alpha$};
\draw (-1,-.75) -- (-1,1.5)  node[left] {$F_1$};
\draw (1,-.75) -- (1,1.5)  node[right] {$F_r$};
\node at (0,0) {$\dots$};
\draw (-1.5,-1.25) to[out=0,in=-135] (-1,-1) to[out=45,in=-45] node[right] {$A_1$}(-1,-.5) to[out=135,in=0] (-1.5,.-.25) ;
\fill (-1,-1) circle [radius=.1cm] node[below=3pt] {$\frac{1}{d_1}(1,-q_1)$} ;
\fill (-1,-.5) circle [radius=.1cm] node[left=7pt] {$\frac{1}{q_1}(1,-d_1)$} ;

\draw (1.5,-1.25) to[out=180,in=-45] (1,-1) to[out=135,in=-135] (1,-.5) to[out=45,in=180] node[right=5] {$A_r$}(1.5,.-.25);
\fill (1,-1) circle [radius=.1cm] ;
\fill (1,-.5) circle [radius=.1cm] ;
\node at (4,-.35) {$A_i^2=-\frac{1}{d_i q_i}$};
\node at (4,0.35) {$F_i^2=-\frac{d_i}{q_i}$};
\node at (-4,0) {$\hat{S}$};
\end{scope}
\begin{scope}[xshift=-6cm]
\draw (-2,-1) -- (2,-1) node[right] {$C$} node[left,pos=0] {$0$};
\draw (-2,1) -- (2,1)  node[below right] {$E$}node[below left,pos=0] {$0$};
\draw (-1,-1.5) -- (-1,1.5)  node[right, pos=.5] {$A_1$};
\draw (1,-1.5) -- (1,1.5)  node[right,pos=.5] {$A_r$};
\node at (0,0) {$\dots$};
\node at (-2,0) {$S$};
\fill (-1,-1) circle [radius=.1cm] ;
\fill (1,-1) circle [radius=.1cm] ;
\fill (-1,1) circle [radius=.1cm] ;
\draw (-1,1) circle [radius=.15cm] ;
\node[above=2pt] at (0,1.5) {$(1,q_i)$-blow-ups};
\fill (1,1) circle [radius=.1cm] ;
\draw (1,1) circle [radius=.15cm] ;
\node[above left] at (-1,1) {$\frac{1}{d_1}(1,q_1)$};
\node[above right] at (1,1) {$\frac{1}{d_r}(1,q_r)$};
\end{scope}
\end{tikzpicture}
 \caption{Blow-up construction of $S$.}
\label{fig:S1}
\end{figure}
The proof is depicted in Figure~\ref{fig:S1}.
Let us start from $S$. We perform the composition
of the $(1,q_i)$-weighted blow-ups at $Q_i$, $i=1,\dots,r$; if $(u_i,v_i)$
are the local variables, then $E$ (resp.~$A_i$) is given by $u_i=0$ (resp.~$v_i=0$). Let $F_1,\dots,F_r$ 
be the exceptional components. From \cite[Theorem~4.3]{AMO-Intersection}
we obtain that $(F_i^2)_{\hat{S}}=-\frac{d_i}{q_i}$. For the strict
transforms we have
\[
(A_i^2)_{\hat{S}}=(A_i^2)_{S}-\frac{1^2}{d_i q_i}=-\frac{1}{d_i q_i},\quad 
(E^2)_{\hat{S}}=(E^2)_{S}-\sum_{i=1}^r\frac{q_i^2}{q_i d_i}=-\sum_{i=1}^r\frac{q_i^2}{q_i d_i}=-\alpha.
\]
Since the centers of the blowing-ups are disjoint to $C$, we still have 
$(C^2)_{\hat{S}}=0$. Moreover, the surface $\hat{S}$ is smooth
along $E$, and $\hat{S}$ has cyclic quotient singular points of type $\frac{1}{q_i}(1,-d_i)$ at $F_i\cap A_i$.

Note that:
\begin{equation}
\begin{split}
\hat{\pi}^*(F_i)=F_i+ d_i A_i,\qquad
\hat{\pi}^*(C)=C+\sum_{i=1}^r {q_i}A_i,\\
\rho^*(A_i)=A_i+\frac{1}{d_i}F_i,\qquad
\rho^*(E)=E+\sum_{i=1}^r \frac{q_i}{d_i}F_i.
\end{split}
\end{equation}
The surface $\hat{S}$ along $A_i$ looks like the exceptional component
of a weighted blow-up of type $(d_i,q_i)$ at a smooth point. This shows
that the stated weighted blowing-ups of $\Sigma_\alpha$
also yield $\hat{S}$. 
\end{proof}

\begin{remdef}
\label{rem:def:S}
As a consequence of Lemmas~\ref{lemma:S} and~\ref{lemma:S:Nagata}, associated with any $(d_i,q_i)$, $i=1,...,r$ 
such that $\gcd(d_i,q_i)=1$ and $\alpha:=\sum_{i=1}^r \frac{q_i}{d_i}\in\ZZ$, there is a reducible normal fake quadric,
say $S$. According to Remark~\ref{rem:dif_dq}, this correspondence is not one to one, but one can still refer to $S$ as 
\emph{the reducible normal fake quadric associated with $(d_i,q_i)$, $i=1,...,r$}.
\end{remdef}

Summarizing, the following describes the numerical relation between the more relevant 
divisors on~$S$:
\begin{equation}
\label{eq:intersections}
A_i^2=E^2=C^2=F^2=0, \quad F\cdot C=F\cdot E=1, \quad C\cdot A_i = E \cdot A_i =\frac{1}{d_i},
\end{equation}
where $F$ is a generic fiber of $\pi_S$.

\begin{remark}\label{rem:fibr2}
The surface $S$ admits another map $\pi_G:S\to\PP^1$ corresponding to the map $[(p,z)]\mapsto z^\vdeg$. 
The fiber corresponding to $0$ is $\vdeg C$, the fiber corresponding to $\infty$ is $\vdeg E$
and the other fibers are isomorphic to $G$ (and they will again be denoted by $G$).
These curves admit a simple characterization.
\end{remark}

\begin{lemma}\label{lem:hv}
Let $D\subset S$ be an irreducible curve.
\begin{enumerate}[label=\rm(\roman{enumi})]
\item\label{lem:v} If $D\cdot F=0$, then $D$ is equal to one of these curves:
$A_1,\dots,A_r$ or a generic fiber~$F$ of $\pi_S$. A linear combination
of such divisors will be called a \emph{vertical divisor}.
\item\label{lem:h} If $D\cdot C=0$, then $D$ is equal to one of these curves:
$C,E$ or a curve~$G$. A linear combination
of such divisors will be called a \emph{horizontal divisor}.
\end{enumerate}
A linear combination of irreducible divisors which are neither vertical
nor horizontal will be called a \emph{slanted divisor}.
\end{lemma}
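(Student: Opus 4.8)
The plan is to deduce each part from the elementary dichotomy for an irreducible curve on a surface carrying a fibration over $\PP^1$: such a curve is either contracted to a point or maps finitely onto the base, and in the latter case it meets the general fiber positively.

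For part~\ref{lem:v} I would use the ruling $\pi_S\colon S\to\PP^1$. Its general fiber $F$ is the pullback $\pi_S^{*}(\mathrm{pt})$ of a point, hence a Cartier divisor, so for an irreducible curve $D$ one has $D\cdot F=\deg\bigl(\cO_S(F)|_D\bigr)=\deg(\pi_S|_D)$; in particular $D\cdot F=0$ forces $\pi_S(D)$ to be a single point, i.e.\ $D$ is contained in a fiber. It then suffices to list the irreducible components of the fibers. Over a generic $\gamma\in\PP^1$ the $\vdeg$ points of $\tau^{-1}(\gamma)$ form a single free $\ZZ/\vdeg$-orbit, so $\tau_2$ glues the $\vdeg$ copies $\{p_j\}\times\PP^1$ into one rational curve, the irreducible fiber $F$. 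Over $\gamma_i$ the set $\tau^{-1}(\gamma_i)$ is a single orbit of size $\vdeg/d_i$ with stabilizer $\langle\eta^{\vdeg/d_i}\rangle\cong\ZZ/d_i$, which by~\eqref{eq:diag} acts on the $\PP^1$-factor by $z\mapsto\zeta_{d_i}^{-1}z$; the fiber is therefore $(\{p\}\times\PP^1)/(\ZZ/d_i)\cong\PP^1$, which is the irreducible curve $A_i$ of Lemma~\ref{lemma:S}\ref{lemma:S:2}. Hence the only irreducible curves contracted by $\pi_S$ are $F$ and $A_1,\dots,A_r$.

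For part~\ref{lem:h} I would instead use the second fibration $\pi_G\colon S\to\PP^1$, $[(p,z)]\mapsto z^{\vdeg}$, of Remark~\ref{rem:fibr2}, whose general fiber is the curve $G$ and whose fibers over $0$ and $\infty$ are $\vdeg C$ and $\vdeg E$. Since all scheme-theoretic fibers of a fibration over $\PP^1$ are linearly equivalent, $\vdeg C\sim G\sim\vdeg E$, whence $\vdeg\,(D\cdot C)=D\cdot G$ for every $D$ and, in particular, $D\cdot C=0$ if and only if $D\cdot G=0$. Applying the same dichotomy to $\pi_G$ (now with the Cartier general fiber $G$) shows that $D\cdot C=0$ exactly when $D$ lies in a fiber of $\pi_G$. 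As those fibers are supported on $G$ (generic), on $C$ (over $0$) and on $E$ (over $\infty$)---all irreducible, since $C,E$ are sections of $\pi_S$ and $G$ is the connected cyclic cover of $\orb$---we conclude $D\in\{C,E,G\}$.

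Beyond this standard fibration argument the only real content is that each special fiber is irreducible, i.e.\ is supported on a single curve. For $\pi_G$ this is exactly the content of Remark~\ref{rem:fibr2}; for $\pi_S$ it is the orbit--stabilizer computation over the $\gamma_i$ recorded above. I expect that bookkeeping (the transitivity of the $\ZZ/\vdeg$-action on each $\tau^{-1}(\gamma_i)$ and the linearity of the stabilizer action on the $\PP^1$-factor) to be the main, though routine, obstacle; once it is in place, the intersection-theoretic step is immediate because the fiber classes are Cartier and $D\cdot F>0$ (resp.\ $D\cdot G>0$) for any dominating $D$.
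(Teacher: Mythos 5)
Your proof is correct and follows essentially the same route as the paper's: both arguments reduce the statement to the fact that an irreducible curve having zero intersection with the fiber class must itself be a fiber of $\pi_S$ (resp.\ of $\pi_G$), whose fibers are exactly the irreducible curves $F,A_1,\dots,A_r$ (resp.\ $G,C,E$). The only differences are cosmetic: the paper deduces ``$D$ equals the fiber through any of its points'' from positivity of the intersection of two distinct irreducible curves meeting at a point, whereas you get containment in a fiber via the Cartier pullback/degree argument and then re-derive the irreducibility of the special fibers by an orbit--stabilizer computation that is already contained in Lemma~\ref{lemma:S} and Remark~\ref{rem:fibr2}.
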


\begin{proof}
Let us start with \ref{lem:v}. Let $p\in D$ and $w:=\pi_S(p)\in\PP^1$.
Note that $\pi_S^{-1}(w)$ is either $A_1,\dots,A_r$ or a fiber~$F$; then
$D\cdot \pi_S^{-1}(w)=0$ and $D\cap\pi_S^{-1}(w)\neq\emptyset$.
Since $D$ and $\pi_S^{-1}(w)$ are irreducible the only option is $D=\pi_S^{-1}(w)$.
For \ref{lem:h} we follow the same ideas using the map~$\pi_G$.
\end{proof}

\subsection{Weil divisor class group} \mbox{}

Consider $S$ the reducible normal fake quadric associated with $(d_i,q_i)$, $i=1,...,r$ 
such that $\gcd(d_i,q_i)=1$ and $\alpha:=\sum_{i=1}^r \frac{q_i}{d_i}\in\ZZ$ as defined in Remark-Definition~\ref{rem:def:S}.
The descriptions of $S$ given in \S\ref{sec:settings} together with \cite[\S2.3]{campillo} are the main ingredients for the computation of the Weil divisor class group~$\cl(S)$. 
Despite $\Sigma_\alpha$ having a simple class group isomorphic to $\ZZ^2$, note the following 
description of this group in terms of the divisor classes involved in the construction of $\Sigma_\alpha$:
\begin{equation}\label{eq:class_alpha}
\cl(\Sigma_\alpha)=\langle C,E,F,F_1,\dots,F_r\mid E\sim C-\alpha F,\  F\sim F_1\sim\dots\sim F_r\rangle,
\end{equation}
where $F$ is a generic fiber. In order to obtain $\cl(\hat{S})$, see Figure~\ref{fig:S1}, the previous generators need to be 
replaced by their strict transforms, the classes of the exceptional components added, and the linear equivalence relations rewritten 
in terms of the new generators,
see~\cite[Proposition~2.10]{campillo},
\begin{equation}\label{eq:class_alpha_blow}
\cl(\hat{S})=\langle C,E,F,F_1,\dots,F_r,A_1,\dots,A_r\mid E\sim C+\sum_{i=1}^r q_i A_i-\alpha F,\  F\sim F_i+d_i A_i\rangle.
\end{equation}
As in~\cite[Proposition~2.12]{campillo}, a presentation of the class group for a blowing-down can easily be obtained
if the exceptional components are part of the presentation of the class group of the source, and hence presentation~\eqref{eq:class_alpha_blow} 
comes in handy. In this situation, it is enough to ``forget'' those exceptional components, that is,
\begin{equation}\label{eq:clS}
\cl(S)=\langle C,E,F,A_1,\dots,A_r\mid E\sim C+\sum_{i=1}^r q_i A_i-\alpha F,\  F\sim d_i A_i\rangle.
\end{equation}

\begin{prop}\label{prop:class}
The class group $\cl(S)$ has the following structure as an abelian group.

\begin{equation}
\label{eq:class}
\cl(S)\cong\ZZ^2 \oplus \bigoplus_{i=1}^{r-1} %
\ZZ%
/
m_i%
,
\end{equation}
where $m_i:=\frac{\hat{d}_i}{\hat{d}_{i-1}}$, $\hat{d}_0=1$, and 
$\hat{d}_i=\gcd(\{\prod_{j\in I} d_j\}_{I\subset \{1,...,r\}, |I|=i})$.

Moreover, the following holds:
\begin{enumerate}[label=\rm(Cl\arabic{enumi})]
 \item\label{prop:class:1}
The free part is generated by the class of $C$ and the class of a suitable linear combination
of $A_1,\dots,A_r$ (which is a rational multiple of $F$).
 \item\label{prop:class:2}
The torsion part has order $m_1\cdot\ldots\cdot m_{r-1}=\frac{d_1\cdot\ldots\cdot d_r}{\vdeg}$.

\item\label{prop:class:3}
The element $T:=E-C\in \tor\cl (S)$ has maximal order $\vdeg=m_{r-1}$ in $\tor\cl (S)$.
\end{enumerate}
\end{prop}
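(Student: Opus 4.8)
The plan is to analyze the presentation~\eqref{eq:clS} of $\cl(S)$ and extract its abelian group structure by computing the Smith normal form of the relation matrix. We have five types of generators $C, E, F, A_1, \dots, A_r$ subject to the relations $E\sim C+\sum_{i=1}^r q_i A_i-\alpha F$ and $F\sim d_i A_i$ for each $i=1,\dots,r$. First I would use the first relation to eliminate $E$ from the generating set, leaving generators $C, F, A_1,\dots,A_r$ with the $r$ relations $F-d_iA_i\sim 0$. The generator $C$ is now free and splits off as one copy of $\ZZ$ (it does not appear in any relation), accounting for one of the two free factors predicted in~\eqref{eq:class}. The remaining task is to understand the abelian group presented by generators $F, A_1, \dots, A_r$ and the $r$ relations $F = d_i A_i$.

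The heart of the computation is this last group. I would set up the $r \times (r+1)$ relation matrix whose columns are indexed by $F, A_1, \dots, A_r$ and whose $i$-th row encodes $F - d_i A_i$. The relations tell us that $d_i A_i = d_j A_j = F$ for all $i,j$, so the $A_i$ become commensurable and the group has rank~$1$ (giving the second free factor of $\ZZ$), with torsion governed by the arithmetic of the $d_i$. To pin down the torsion, I would compute the greatest common divisors of the maximal minors of the relation matrix; these are exactly the products $\hat d_i = \gcd\{\prod_{j\in I}d_j : |I|=i\}$ appearing in the statement, and the invariant factors of the Smith normal form are then the ratios $m_i = \hat d_i/\hat d_{i-1}$. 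This yields the torsion decomposition $\bigoplus_{i=1}^{r-1}\ZZ/m_i$ and establishes~\eqref{eq:class}; item~\ref{prop:class:2} follows since $\prod_{i=1}^{r-1} m_i = \hat d_{r-1} = \gcd_i\prod_{j\neq i} d_j$, which equals $\frac{d_1\cdots d_r}{\vdeg}$ precisely because $\vdeg = \lcm(d_1,\dots,d_r)$ and the constraint~\eqref{eq:alpha} forces $\vdeg^2 \mid d_1\cdots d_r$ as noted after~\eqref{eq:lmc-r-1}.

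For the three refined assertions I would argue as follows. For~\ref{prop:class:1}, the free part is generated by $C$ together with a generator of the rank-one piece of $\langle F, A_1,\dots,A_r\rangle$; since $d_iA_i = F$ for each $i$, a suitable integer combination $\sum c_i A_i$ with $\sum c_i/d_i$ chosen appropriately is a rational multiple of $F$ and generates the free part modulo torsion. For~\ref{prop:class:3}, I would verify directly from~\eqref{eq:clS} that $T := E - C \sim \sum_{i=1}^r q_i A_i - \alpha F$ is torsion (its image in the rank-two free quotient vanishes, using $\alpha = \sum q_i/d_i \in \ZZ$), and then compute its order. Since $d_i A_i = F$, we get $d_i \cdot (q_i A_i) = q_i F$, and combining across all $i$ shows that the order of $T$ divides $\vdeg = \lcm(d_i)$; the reverse inequality, that the order is exactly $\vdeg$, requires checking that no smaller multiple of $T$ is a combination of the relations, which is where the coprimality $\gcd(d_i,q_i)=1$ enters.

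The main obstacle I expect is the torsion computation, specifically identifying the invariant factors of the relation matrix with the stated ratios $m_i = \hat d_i/\hat d_{i-1}$. The clean formula $\hat d_i = \gcd\{\prod_{j\in I} d_j : |I|=i\}$ for the $\gcd$ of the $i\times i$ minors is not completely obvious and needs a careful combinatorial argument: one must check that every $i\times i$ minor of the relation matrix is, up to sign, a product of $i$ of the $d_j$'s (or involves the $F$-column in a controlled way), and conversely that every such product is realized as a minor. This is exactly the content invoked from~\cite[\S2.3]{campillo}, so I would either cite that computation directly or reproduce the minor analysis, taking care that the presence of the $F$-column (with all entries~$1$) does not spoil the pattern. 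Establishing the sharpness of the order $\vdeg$ in~\ref{prop:class:3} is the second delicate point, and I would handle it by exhibiting an explicit homomorphism $\cl(S)\to\ZZ/\vdeg$ detecting $T$, rather than by a bare divisibility estimate.
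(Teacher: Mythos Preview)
Your approach is essentially the same as the paper's: eliminate $E$ using the first relation in~\eqref{eq:clS}, split off the free factor $\ZZ\langle C\rangle$, and compute the Smith normal form of the relation matrix for the remaining generators. The paper goes one step further and also eliminates $F$ (via $F\sim d_rA_r$), obtaining the $(r-1)\times r$ matrix with rows $d_iA_i-d_rA_r$, $i=1,\dots,r-1$; the $i$-th Fitting ideal of that matrix is exactly $(\hat d_i)$. If you keep $F$ as you propose, your $r\times(r+1)$ matrix has $i$-th Fitting ideal $(\hat d_{i-1})$ rather than $(\hat d_i)$, because the minors using the $F$-column produce products of only $i-1$ of the $d_j$'s; so your caution about that column is well placed. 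The invariant factors are then $1,m_1,\dots,m_{r-1}$ and the conclusion is unchanged.

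Two small corrections. For~\ref{prop:class:2}, the identity $\gcd_i\prod_{j\ne i}d_j=\frac{d_1\cdots d_r}{\lcm(d_1,\dots,d_r)}$ is pure arithmetic (it is~\eqref{eq:gcd-lcm} in the appendix) and does not use~\eqref{eq:alpha}; indeed the paper remarks that only~\ref{prop:class:3} depends on that constraint. For~\ref{prop:class:3}, the paper shows $\vdeg T\sim 0$ directly and then uses the uniqueness of the canonical form (Lemma~\ref{lem:unico}) to argue that $\vdeg_1 T\sim 0$ forces $d_i\mid \vdeg_1 q_i$, hence $d_i\mid \vdeg_1$ by coprimality, so $\vdeg\mid\vdeg_1$. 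Your proposed detecting homomorphism $\cl(S)\to\ZZ/\vdeg$ would work equally well. The remaining claim $\vdeg=m_{r-1}$ is the arithmetic identity~\eqref{eq:gcd-lcm2}, which is where~\eqref{eq:alpha} actually enters.
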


\begin{remark}\label{rem:divT}
Note that there might be more than one subgroup of order $\vdeg$ in $\tor\cl (S)$, but the one 
generated by $T$ will be specially useful for our purposes. 
Note that $T$ is \emph{horizontal} since it is the difference of two sections, but 
it is also \emph{vertical} as it is linearly equivalent to 
$\sum_{i=1}^r q_i A_i-\alpha F$.
\end{remark}

\begin{proof}
From the presentation matrix one can easily see that $\cl(S)$ is the direct sum of 
the free subgroup $\cl_C(S):=\ZZ\langle C\rangle$ and $\cl_F(S):=\ZZ\langle A_1,\dots,A_r\rangle$.
Note that $\cl_F(S)\otimes_\ZZ\QQ=\QQ\langle F\rangle$ (of dimension~$1$) and that $\tor\cl (S)\subset\cl_F(S)$.
This shows part~\ref{prop:class:1}.

The presentation matrix for $\cl_F(S)$ is given as
\[
\begin{pmatrix}
d_1\!\!\!&\dots&0&-d_r\\
\vdots&\ddots&\vdots&\vdots\\
0&\dots&\!d_{r-1}\!\!\!&-d_r
\end{pmatrix}.
\]
Its Fitting ideals are $(\hat{d}_i)$, $i=1,...,r-1$, and hence its invariant factors are 
$m_i:=\frac{\hat{d}_i}{\hat{d}_{i-1}}$, which ends the structure shown in~\eqref{eq:class}.

Part~\ref{prop:class:2} follows from the formula $\hat{d}_{r-1} := 
\gcd( \frac{d_1\cdot\ldots\cdot d_r}{d_1},\ldots,
\frac{d_1\cdot\ldots\cdot d_r}{d_r} ) = \frac{d_1\cdot\ldots\cdot d_r}{\lcm(d_1,\ldots,d_r)}$,
see \eqref{eq:gcd-lcm} from the appendix, and the definition of $\vdeg=\lcm(d_1,\ldots,d_r)$.

For part~\ref{prop:class:3}, note that from the presentation matrix it follows that the maximal 
order of $\tor\cl (S)$ is $\vdeg$. Hence, it remains to verify that the order of $T$ is $\vdeg$,
\[
\vdeg T\sim \vdeg(E-C)\sim \vdeg\sum_{i=1}^r q_i A_i-\vdeg\alpha F\sim
\left(\sum_{i=1}^r \frac{\vdeg}{d_i} q_i -\vdeg\alpha\right)F=0.
\]
Then $\vdeg T\sim 0$. To check that $\vdeg$ is exactly the order
of $T$, we need Lemma~\ref{lem:unico} below. Assume that another
integer $\vdeg_1$ satisfies $\vdeg_1 T\sim 0$. Then, $d_i$ divides $\vdeg_1 q_i$. Since $q_i$ and $d_i$ are coprime,
$\vdeg_1$ is a multiple of $d_i$, and hence it is a multiple of~$\vdeg$. The fact that $\vdeg$ is precisely $m_{r-1}$
is a consequence of another arithmetic property, see \eqref{eq:gcd-lcm2} from the appendix.
\end{proof}

\begin{remark}
\label{rem:prop:class}
Note that only part~\ref{prop:class:3} depends on the condition~\eqref{eq:alpha}. 
Also, as a consequence of parts~\ref{prop:class:2} and~\ref{prop:class:3}, note that $\vdeg$ divides 
$\frac{d_1\cdot\ldots\cdot d_r}{\vdeg}$ and thus $\vdeg^2$ divides $d_1\cdot\ldots\cdot d_r$.
\end{remark}

\begin{lemma}\label{lem:unico}
Let $D\in\cl_F(S)$. Then, there are unique $\sf,a_i\in\ZZ$, $0\leq a_i< d_i$, $i=1,\dots,r$, such that
\[
D\sim\sum_{i=1}^r a_i A_i+\sf F.
\]
\end{lemma}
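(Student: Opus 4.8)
The plan is to prove existence and uniqueness separately, both reduced to the explicit presentation of $\cl_F(S)$ recorded in the proof of Proposition~\ref{prop:class}, whose relation module is generated by the rows $d_i A_i - d_r A_r$, $i=1,\dots,r-1$; equivalently, the only relations among the $A_i$ are $d_i A_i \sim d_j A_j$, each of these common values being~$F$.

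For existence, I would start from an arbitrary expression $D \sim \sum_{i=1}^r c_i A_i$ with $c_i \in \ZZ$, which is possible since the $A_i$ generate $\cl_F(S)$. Performing Euclidean division $c_i = d_i k_i + a_i$ with $0 \le a_i < d_i$ and using $d_i A_i \sim F$ to replace each $d_i k_i A_i$ by $k_i F$, one collects
\[
D \sim \sum_{i=1}^r a_i A_i + \Bigl(\sum_{i=1}^r k_i\Bigr) F,
\]
so that $\sf := \sum_i k_i$ gives the desired form.

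For uniqueness it suffices, by subtracting two representations of the same class, to show that $\sum_i b_i A_i + g F \sim 0$ with $g \in \ZZ$ and $|b_i| < d_i$ forces all $b_i = 0$ and $g = 0$. Here I would substitute $F \sim d_1 A_1$ to rewrite the left-hand side as the pure combination $(b_1 + g d_1) A_1 + \sum_{i \ge 2} b_i A_i$, and then invoke the following description of the relation module: a combination $\sum_i n_i A_i$ is trivial in $\cl_F(S)$ precisely when $d_i \mid n_i$ for every $i$ and $\sum_i n_i/d_i = 0$, the last condition reflecting that $F$ has infinite order in the free part identified in~\ref{prop:class:1}. The divisibility conditions together with the bounds $|b_i| < d_i$ force $b_i = 0$ for $i \ge 2$ and $d_1 \mid b_1$, hence $b_1 = 0$; the vanishing condition $\sum_i n_i/d_i = 0$ then reads exactly $g = 0$.

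The main obstacle is the bookkeeping in the relation module: because $F$ is not independent of the $A_i$ (it equals $d_i A_i$ for each $i$), the coefficient $g$ of $F$ genuinely mixes into the $A_i$-coordinates, so one must simultaneously track the torsion constraints $d_i \mid n_i$ and the single free-part constraint $\sum_i n_i/d_i = 0$ coming from the infinite order of $F$. Once the relation module is pinned down in this form, the normalization $0 \le a_i < d_i$ does all the remaining work, and both statements follow at once.
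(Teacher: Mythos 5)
Your proof is correct and follows essentially the same route as the paper: uniqueness is reduced to showing that a combination $\sum_i a_iA_i+\sf F$ representing $0$ must lie in the relation submodule generated by the $F-d_iA_i$, whence $d_i\mid a_i$ and the bounds force everything to vanish. The paper only writes out this uniqueness half (existence being the easy Euclidean-division reduction you also give), so your additional bookkeeping of the relation module is just a more explicit version of the same argument.
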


\begin{proof}
If $D$ had two representations as in the statement, then the difference would represent 0 as a combination 
$0=\sum_{i=1}^r a_i A_i+\sf F\in\cl_F(S)$, where $-d_i< a_i<d_i$. Then, this expression would be a linear 
combination of $F-d_i A_i$ and hence $d_i$ would divide $a_i$, which can only happen if $a_i=0$. 
This implies~$\sf=0$.
\end{proof}

Note the following additional linear equivalences $G\sim \vdeg C\sim \vdeg E$ given by the projection~$\pi_G$.

\begin{remark}\label{rem:hats}
There are canonical ways to represent a divisor class in~$S$ up to linear equivalence, but for technical reasons, 
we will oftentimes use non-canonical expressions. However, one can apply Lemma~\ref{lem:unico} to find a 
unique representative for a divisor class. Note that any divisor $D$ is linearly equivalent to a non-unique
expression of the form
\begin{equation}
c\, C+eE+\sum_{i=1}^r a_i A_i+\sf F.
\end{equation}
The following term $\sv:=F\cdot D=c+e \in \ZZ$ is intrinsic to $D$. Hence, 
$D\sim \sv C+\hat{D}$, where
\[
\hat{D} = \sum_{i=1}^r (a_i+eq_i)A_i + (\sf-e\alpha) F \in \cl_F(S).
\]
Using Lemma~\ref{lem:unico} on $\hat{D}$ one obtains the \emph{canonical form}
\begin{equation}\label{eq:canonical}
D\sim \sv C+\sum_{i=1}^r \hat{a}_i A_i+\hat{\sf} F, 
\end{equation}
where

\begin{enumerate}[label=(D\arabic{enumi})]\label{enum:D}
\item\label{enum:D1} $\sv=F\cdot D\in \ZZ$,
\item\label{enum:D2} $\hat{a}_i\equiv (a_i+eq_i) \bmod d_i$ are integers in $[0,d_i)$, and
\item\label{enum:D3} $\hat{\sf}=\sf+a-\hat{a}\in\ZZ$, 
for $\hat{a}:=\sum_{i=1}^r\frac{\hat{a}_i}{d_i}\in\ZZ\frac{1}{\vdeg}$ and $a:=\sum_{i=1}^r\frac{a_i}{d_i}\in\ZZ\frac{1}{\vdeg}$.
\end{enumerate}

By Lemma~\ref{lem:unico}, the triple $(\sv,(\hat{a}_i)_{i=1,...r},\hat{\sf})$ 
characterizes the linear equivalence class of~$D$. Also, note that 
\[
\h:=C\cdot D=\hat{\sf}+\hat{a}=\sf+a\in\ZZ\frac{1}{\vdeg},
\]
and, moreover, the pair 
$(C\cdot D,F\cdot D)=(\h,\sv)\in\ZZ\frac{1}{\vdeg}\times\ZZ$ 
determines the linear class of $D$ up to torsion.
\end{remark}

In a natural way, we have the following exact sequence involving the \emph{horizontal} part $\cl_H(S):=\ZZ\langle C,E\rangle$ of $\cl(S)$:
\begin{equation}
\label{eq:horizontal-seq}
\begin{tikzcd}
0\ar[r]&\ZZ\oplus\ZZ/\vdeg\cong\cl_H(S)\ar[r,hook]&\cl(S)\ar[r]&\ZZ\oplus\bigoplus_{i=1}^{r-2}\ZZ/m_i\ar[r]&0.
\end{tikzcd}
\end{equation}

\begin{remark}\label{rem:cond_clh}
By~\eqref{eq:canonical}, the condition for $D$ to be in $\cl_H(S)$ is equivalent to 
$\varphi_D=0$ and $\hat{a}_i\equiv 0 \bmod d_i$.
By~\ref{enum:D2}, the latter is equivalent to the existence of a solution of
\begin{equation}\label{eq:aimod}
x \equiv a_iq_i^{-1} \mod d_i, \quad \forall i=1,\ldots,r.
\end{equation}
\end{remark}

\subsection{Canonical divisor}\mbox{}

In the forthcoming calculations, the role of the class of the canonical divisor on $S$ will be essential.
The following result describes it.
\begin{prop}\label{prop:canonical}
The divisor
\[
K_S:=-(C+E)+(r-2)F-\sum_{i=1}^r A_i
\]
is a canonical divisor of~$S$.
\end{prop}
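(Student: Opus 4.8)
The plan is to compute the canonical divisor of $S$ by pulling back known canonical data through the covering and blow-up structures already established. There are two natural routes, and I would pursue the one that exploits the alternative construction in Lemma~\ref{lemma:S:Nagata}, since the canonical class of the Hirzebruch surface $\Sigma_\alpha$ is classical and the adjunction formula under weighted blow-ups is controlled by the intersection data collected in~\eqref{eq:intersections}.

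\textbf{Route via $\Sigma_\alpha$.} First I would recall that for the smooth ruled surface $\Sigma_\alpha$ one has $K_{\Sigma_\alpha}\sim -2C_0+(\alpha-2)F$ for a suitable section $C_0$, or equivalently $K_{\Sigma_\alpha}\sim-(C+E)-2F$ after expressing things in the generators $C,E,F$ of~\eqref{eq:class_alpha} (using $E\sim C-\alpha F$). Then I would track how $K$ transforms under the composition $\hat\pi:\hat S\to\Sigma_\alpha$ of the $(d_i,q_i)$-weighted blow-ups. For a weighted blow-up of type $(p,q)$ at a smooth point with exceptional divisor $E_0$, the discrepancy formula gives $K_{\hat S}\sim\hat\pi^*K_{\Sigma_\alpha}+(p+q-1)E_0$ in the smooth case; in the quotient-singular weighted setting the analogous log-discrepancy contribution is $(d_i+q_i-1)$ times the exceptional component $F_i$, suitably normalized. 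Using the pull-back relations recorded in the proof of Lemma~\ref{lemma:S:Nagata},
\[
\hat\pi^*(F_i)=F_i+d_i A_i,\qquad \rho^*(A_i)=A_i+\tfrac{1}{d_i}F_i,\qquad \rho^*(E)=E+\sum_{i=1}^r\tfrac{q_i}{d_i}F_i,
\]
I would then push the resulting expression for $K_{\hat S}$ down to $S$ via $\rho:\hat S\to S$, discarding the exceptional $F_i$ (exactly as the class-group presentation~\eqref{eq:clS} is obtained from~\eqref{eq:class_alpha_blow} by ``forgetting'' the exceptional components). The claimed coefficients $-(C+E)+(r-2)F-\sum_i A_i$ should emerge, with the $(r-2)F$ term arising from the $-2F$ of $\Sigma_\alpha$ together with the $r$ contributions from the blow-ups, and the $-\sum A_i$ coming from the adjunction correction along the $A_i$.

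\textbf{Alternative route via the quotient.} Since $S=(G\times\PP^1)/(\ZZ/\vdeg)$, one could instead use $K_{G\times\PP^1}\sim p_1^*K_G+p_2^*K_{\PP^1}$ together with the ramification formula for the quotient map $\tau_2$. The orbifold canonical class would involve $2g(G)-2=\vdeg\chi^{\mathrm{orb}}$ (via~\eqref{eq:beta}) and the fractional contributions $1-\tfrac{1}{d_i}$ at the singular points, matched against the $A_i$ and the sections $C,E$. I would expect this to reproduce the same answer after using $\chi^{\mathrm{orb}}=2-\sum_i(1-\tfrac1{d_i})$, but the bookkeeping of the $\ZZ/\vdeg$-isotropy at the $2r$ quotient points is more delicate.

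\textbf{Verification and the main obstacle.} Whichever route I take, the cleanest certification is to \emph{check adjunction on every relevant curve}: I would verify $K_S\cdot C+C^2=2g(C)-2$ and similarly for $E$, and the appropriate orbifold-adjunction equality $(K_S+A_i)\cdot A_i$ for the fibers, using the intersection numbers $C^2=E^2=A_i^2=0$, $F\cdot C=1$, and $C\cdot A_i=E\cdot A_i=\tfrac1{d_i}$ from~\eqref{eq:intersections}. Since $C\cong E\cong\PP^1$ are rational sections, one needs $K_S\cdot C=-2$; with the proposed $K_S$ this reads $-(C+E)\cdot C+(r-2)F\cdot C-\sum_i A_i\cdot C=-(0+0)+(r-2)-\sum_i\tfrac1{d_i}$, and this must equal $-2$, i.e.\ $\sum_i\tfrac1{d_i}=r$, which is \emph{false} in general — so the genuine check must use orbifold adjunction with the fractional correction $\sum_i(1-\tfrac1{d_i})$ accounting for the quotient singularities lying on $C$. \textbf{The hard part will be} handling these fractional adjunction contributions correctly: the sections $C,E$ pass through the cyclic quotient points $P_i,Q_i$, so naive integral adjunction does not apply and one must use the orbifold/log version, ensuring that the $-\sum_i A_i$ term exactly supplies the needed $\sum_i(1-\tfrac1{d_i})$ defect. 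Once the fractional contributions are correctly assembled — most safely by the $\Sigma_\alpha$ route where the discrepancies are explicit — the identity follows by comparing classes in $\cl(S)$ via the intrinsic pair $(\varphi_D,\sv)$ and the canonical form~\eqref{eq:canonical}.
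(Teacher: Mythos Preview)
Your primary route via $\Sigma_\alpha$ is exactly the paper's approach, and it works. The one refinement worth noting: rather than invoking the discrepancy $(d_i+q_i-1)A_i$ for each weighted blow-up and then simplifying, the paper first rewrites $K_{\Sigma_\alpha}\sim -(C+E)-2F$ as the linearly equivalent divisor $-(C+E)+(r-2)F-\sum_{i=1}^r F_i$, which is \emph{logarithmic} at every center $F_i\cap C$ (locally of the form $-\{x=0\}-\{z=0\}$). For a log-canonical pair the weighted blow-up simply adds the exceptional with coefficient $-1$, so one reads off $K_{\hat S}=-(C+E)+(r-2)F-\sum(A_i+F_i)$ with no arithmetic, and pushing forward by $\rho$ kills the $F_i$. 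Your version reaches the same class after using $d_iA_i\sim F$ to convert $-2F+\sum(d_i-1)A_i$ into $(r-2)F-\sum A_i$; the log trick just short-circuits that step.

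Your verification paragraph is a side remark, not part of the proof, but your diagnosis there is right: $K_S\cdot C=(r-2)-\sum_i\tfrac{1}{d_i}=-\chi^{\mathrm{orb}}$ (cf.\ the Remark following the proposition), and the discrepancy from $-2$ is precisely the orbifold correction $\sum_i(1-\tfrac{1}{d_i})$ coming from the quotient points on $C$. So adjunction on $C$ confirms the formula once interpreted in the orbifold sense, as you anticipated.
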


\begin{proof}
Recall the blowing-up-down construction starting from $\Sigma_\alpha$ described in Figure~\ref{fig:S1}. 
A canonical divisor of $\Sigma_\alpha$ is $-(C+E+F+F')$, where $F$ and $F'$ are two fibers.
It is more convenient to consider the following linearly equivalent divisor
\[
K_{\Sigma_\alpha}:=-(C+E)+(r-2)F-\sum_{i=1}^r F_i.
\]
Recall $K_{\hat{S}}=\hat{\pi}^* K_{\Sigma_\alpha}+K_{\hat{\pi}}$ where $K_{\hat{\pi}}$ is the relative 
canonical divisor of $\hat{\pi}:\hat{S}\to\Sigma_\alpha$. 
Since the divisor $K_{\Sigma_\alpha}$ is \emph{logarithmic} at the centers of the blow-ups, then 
\[
K_{\hat{S}}:=-(C+E)+(r-2)F-\sum_{i=1}^r (A_i+F_i).
\]
The direct image under $\rho$ gives the result.
\end{proof}

\begin{remark}
Note that $K_S\cdot F=-2$ and $K_S\cdot C=-\chi^{\textrm{orb}}$.
\end{remark}
\section{Cohomology of line bundles}
\label{sec:cohomology}
Let $D$ be a Weil divisor of $S$. The main goal of this section
is to compute the cohomology groups $H^i(S,\cO_S(D))$ for $i=0,1,2$.
The key point in these calculations relies on the interpretation of
the global sections of $\cO_S(D)$ as global sections of a line bundle on a 
weighted projective plane, that is, the vector space of quasihomogeneous
polynomials of a fixed degree satisfying certain vanishing conditions.
Then, Serre's duality and Riemann-Roch's formula for normal surfaces is applied
to obtain the second cohomology group and the Euler characteristic, respectively.
Finally, the first cohomology group is obtained as a side product. For this reason, we 
have organized this section in four parts, where the different objects are studied, namely 
\S\ref{subsec:global} global sections, 
\S\ref{subsec:euler} Euler characteristics,  
\S\ref{sec:1st-cohomology} general vanishing results, and \S\ref{subsec:special} special cases (the last two both serve the
understanding of the first cohomology group).

\subsection{Global sections}\label{subsec:global}
\mbox{}

Consider $D$ a Weil divisor in $S$. By~\eqref{eq:clS}, its class in $\cl(S)$ can be written as 
$c\, C + eE + \sum_{i=1}^r a_i A_i + \sf F$, where $c, e, a_i \in \ZZ$, $i=1,\ldots,r$, that is,
\begin{equation}\label{class-of-D}
D \sim c\, C + eE + \sum_{i=1}^r a_i A_i + \sf F \sim \sv\, C + \sum_{i=1}^r \hat{a}_i A_i + \hat{\sf}F,
\end{equation}
where the right-most expression is unique as described in Remark~\ref{rem:hats}.
Note, however, that $c,e,a_i$ are not uniquely determined by $D$, since the group $\cl(S)$ is not torsion 
free and $C,E,A_i$ are not linearly independent. 

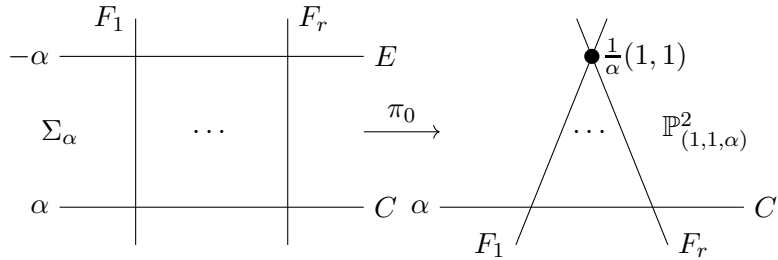
\begin{figure}[ht]
﻿\begin{tikzpicture}
\draw (-2,-1) -- (2,-1) node[right] {$C$} node[left,pos=0] {$\alpha$};
\draw (-2,1) -- (2,1)  node[right] {$E$}node[left,pos=0] {$-\alpha$};
\draw (-1,-1.5) -- (-1,1.5)  node[left] {$F_1$};
\draw (1,-1.5) -- (1,1.5)  node[right] {$F_r$};
\node at (0,0) {$\dots$};
\node at (-2,0) {$\Sigma_\alpha$};

\draw[->] (2,0)--(3,0) node [above,pos=.5] {$\pi_0$};

\begin{scope}[xshift=5cm]
\draw (-2,-1) -- (2,-1) node[right] {$C$} node[left,pos=0] {$\alpha$};
\draw (-1,-1.5) -- ($-.2*(-1,-1.5)+1.2*(0,1)$)  node[left,pos=0] {$F_1$};
\draw (1,-1.5) --  ($-.2*(1,-1.5)+1.2*(0,1)$) node[right,pos=0] {$F_r$};
\node at (0,0) {$\dots$};
\node at (1.5,0) {$\mathbb{P}^2_{(1,1,\alpha)}$};
\fill (0,1) circle [radius=.1] node[right] {$\frac{1}{\alpha}(1,1)$};
\end{scope}
\end{tikzpicture}
\caption{Birational transformation to $\PP^2_{(1,1,\alpha)}.$}
\end{figure}

Recall that $S$ is birationally equivalent to
$\PP^2_{(1,1,\alpha)}$ following the diagram
\[
\begin{tikzcd}
\PP^2_{(1,1,\alpha)}
& [20pt] \Sigma_\alpha \ar[l, "\pi_0"']
& [40pt] \widehat{S} \ar[l, "\hat{\pi}:=\pi_1 \circ \dots \circ \pi_r"'] \ar[r, "\rho"] \ar[ll, "\pi"', bend left = 25]
& [20pt] S,
\end{tikzcd}
\]
where
\[
\begin{aligned}
\rho^{*} (A_i) &= A_i + \frac{1}{d_i} F_i, & \qquad & \rho^{*} (E) = E + \sum_{i=1}^r \frac{q_i}{d_i} F_i, & \qquad & \rho^{*} (C) = C, \\
\pi^{*} (F_i) &= F_i + \frac{1}{\alpha} E + d_i A_i, && \pi^{*} (C) = C + \sum_{i=1}^r q_i A_i,
& \qquad & \pi^{*} (F) = F + \frac{1}{\alpha} E. \\
\end{aligned}
\]
By the Projection Formula for normal surfaces (see~\cite[Theorem 2.1]{Sakai84}) 
\[
H^0(S,\cO_S(D)) \simeq H^0 \left( \widehat{S},\cO_{\widehat{S}}(D') \right),
\]
where
\[
D' := \lfloor \rho^* (D) \rfloor = c\, C + e E + \sum_{i=1}^r a_i A_i + \sf F + 
\sum_{i=1}^r \left\lfloor \frac{eq_i + a_i}{d_i} \right\rfloor F_i.
\]
Using the morphism $\pi$, there is a natural identification of the global sections of $\cO_{\widehat{S}}(D')$ with those 
of~$\cO_{\PP^2_{(1,1,\alpha)}}(\pi_*(D'))$.
More precisely, according to \cite[Proposition 4.2(2)]{ACM19},
\[
H^0 \left( \widehat{S},\cO_{\widehat{S}}(D') \right)\! \simeq\!
\left\{
H \!\in\! \CC[x,y,z]_{(1,1,\alpha),d} \ \bigg| 
\begin{aligned}
& \mult_{E'} (\pi^*(H))\! \geq \mult_{E'} \left( \pi^* \left(%
\pi_* (D') %
\right) \!-\! D' \right) \\
& \ \forall E' \in\exc(\pi) = \{E,A_1,\ldots,A_r\}
\end{aligned}
\right\}\!,
\]
where $\CC[x,y,z]_{(1,1,\alpha),d}$ denotes the $(1,1,\alpha)$-quasihomogeneous polynomials in $x,y,z$ 
of degree $d := \deg_{(1,1,\alpha)} (\pi_{*} (D'))$.

Note that
\[
\pi_* (D') %
= c\, C + \sf F + \sum_{i=1}^r \left\lfloor \frac{eq_i + a_i}{d_i} \right\rfloor F_i,
\]
which has degree $d = \alpha c + \sf + \sum_{i=1}^r \left \lfloor \frac{eq_i+a_i}{d_i} \right\rfloor=\alpha \sv + \hat{\sf}$, 
and thus
\begin{equation}\label{eq:condD}
\pi^* \left(%
\pi_* (D') %
\right) - D'
= \frac{1}{\alpha}\left( \sum_{i=1}^r \left\lfloor \frac{eq_i+a_i}{d_i} \right\rfloor - e\alpha + \sf \right) E
+ \sum_{i=1}^r \left( cq_i + \left\lfloor \frac{eq_i+a_i}{d_i} \right\rfloor d_i - a_i \right) A_i.
\end{equation}

Using that $(c+e)=D\cdot F=\sv$ (see~\ref{enum:D1}) and 
$eq_i+a_i - \left\lfloor \frac{eq_i+a_i}{d_i} \right\rfloor d_i \equiv (eq_i+a_i)\equiv \hat{a}_i \bmod d_i$
(see~\ref{enum:D2}),
the coefficient of $A_i$ in \eqref{eq:condD} can be rewritten as~$\sv q_i - \hat{a}_i$
and the one of $E$ as $\frac{\hat{\sf}}{\alpha}$.

Assume, without loss of generality, that $\pi_0: \Sigma_\alpha \to \PP^2_{(1,1,\alpha)}$ is the
$(1,1)$-blow-up at the point $[0:0:1] \in \PP^2_{(1,1,\alpha)}$. Then, 
\[
\mult_E (\pi^* (H)) = \frac{1}{\alpha} \ord ( H(x,y,1) ).
\]
Also assume that $F_i$ is the line in $\PP^2_{(1,1,\alpha)}$ given by $x-\gamma_i y = 0$, $i = 1,\ldots,r$,
($\gamma_i\neq\gamma_j$, if $i\neq j$) and $C = \{z=0\}$ so that $F_i \cap C = \{ [\gamma_i:1:0]\}$. 
Then $\pi_i$ is the $(d_i,q_i)$-blow-up at a smooth point $(\gamma_i,0) \in \CC^2$ with local coordinates $(x,z)$. Hence
\[
\mult_{A_i} (\pi^* (H)) = \ord (H(x^{d_i}+\gamma_i,1,z^{q_i})).
\]
Summarizing $H^0(S,\cO_S(D))$ can be identified via $\pi$ and $\rho$ with the vector space of $(1,1,\alpha)$-quasihomogeneous
polynomials $H(x,y,z)$ in $x,y,z$ satisfying
\begin{equation}\label{description-h0OD}
\left\{\begin{aligned}
& \deg (H(x,y,z)) = \sv \alpha + \hat{\sf} = \sv \alpha + \sf + a - \hat{a}  = d, \\
& \ord (H(x,y,1)) \geq \hat{\sf} = \sf + a - \hat{a}, \\[3pt]
& \ord (H(x^{d_i}+\gamma_i,1,z^{q_i})) \geq \sv q_i - \hat{a}_i, \quad \forall i = 1,\ldots,r,
\end{aligned}\right.
\end{equation}
where $a=\sum_{i=1}^r \frac{a_i}{d_i}$ and $\hat{a}=\sum_{i=1}^r \frac{\hat{a}_i}{d_i}$ as described in Remark~\ref{rem:hats}.

In order to describe the contribution of the different cohomology spaces $H^i(S,\cO_S(D))$ it is very 
convenient to construct a lattice that will encode relevant properties of divisors classes.

\begin{defi}
\label{sec:lattice} 
We shall define the \emph{divisor lattice} $L:=\ZZ\frac{1}{\vdeg} \times \ZZ\subset \QQ^2$ 
and the map  $\cl(S)\to L$ given by $D\mapsto \ell_D:=(\h,\sv)=(D\cdot C,D\cdot F)\in L$. 
By the discussion in Remark~\ref{rem:hats}, this map is onto. 
Moreover, its kernel is given by the torsion part of~$\cl(S)$. In particular, given a 
lattice point $\ell\in L$, there are exactly $\frac{d_1\cdot\ldots\cdot d_r}{\vdeg}$ 
divisor classes in $\cl(S)$ whose images coincide with~$\ell\in L$.
\end{defi}

For instance, the following proposition states that $H^0(S,\cO_S(D)) \neq 0$ is only possible if 
$\ell_D$ sits on the first quadrant $L_{\geq 0}:=L\cap \QQ^2_{\geq 0}$ (lattice axes included) of~$L$.

\begin{prop}\label{two-directions}
Using the previous notation, 
\begin{equation}
\label{H0}
H^0(S,\cO_S(D)) \neq 0 \ \Rightarrow \ \ell_D\in L_{\geq 0},
\end{equation}
see the left-hand side of Figure{\rm~\ref{fig:H012}}.
\end{prop}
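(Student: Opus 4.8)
The plan is to deduce the statement from the existence of an effective representative in the class of $D$, together with the two rulings of $S$. First I would observe that a nonzero element of $H^0(S,\cO_S(D))$ is a global section of the reflexive sheaf $\cO_S(D)$ whose divisor of zeros $D'$ is effective and satisfies $D'\sim D$. Since the $\QQ$-valued (Mumford) intersection form on the normal surface $S$ is invariant under linear equivalence, this lets me replace $D$ by $D'$ in both coordinates of $\ell_D$, namely $\sv=D\cdot F=D'\cdot F$ and $\h=D\cdot C=D'\cdot C$.

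For the fiber coordinate $\sv$, I would intersect $D'$ with a generic fiber $F$ of the ruling $\pi_S$. By Lemma~\ref{lem:hv} and~\eqref{eq:intersections}, such an $F$ is an irreducible curve lying in the smooth locus of $S$, and for a generic choice it is not a component of $D'$; since $D'$ is effective, all local intersection multiplicities of $D'$ and $F$ are non-negative, whence $\sv=D'\cdot F\geq 0$.

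For the section coordinate $\h$, I would use the second ruling $\pi_G\colon S\to\PP^1$ from Remark~\ref{rem:fibr2}, whose generic fiber $G$ avoids $\sing(S)$ and satisfies $G\sim\vdeg\,C$. The identical argument yields $D'\cdot G\geq 0$, and dividing by $\vdeg$ gives $\h=D\cdot C=\frac{1}{\vdeg}(D'\cdot G)\geq 0$. Combining the two inequalities places $\ell_D=(\h,\sv)$ in $\QQ^2_{\geq 0}\cap L=L_{\geq 0}$, which is the desired conclusion.

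I expect the only delicate point to be the non-negativity of the intersection of an effective divisor with an irreducible curve not contained in its support: on a surface with quotient singularities this is a property of the rational intersection form, and it applies here precisely because the genericity of $F$ and $G$ forces these curves to meet $D'$ at finitely many points, all lying in the smooth locus. Alternatively, one could try to read the two inequalities directly off the quasihomogeneous description~\eqref{description-h0OD} of $H^0(S,\cO_S(D))$, but the fibration argument is cleaner and avoids sign issues coming from the possibly negative value of $\alpha$.
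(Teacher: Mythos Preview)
Your proof is correct and takes a genuinely different route from the paper. The paper works through the explicit identification of $H^0(S,\cO_S(D))$ with a space of quasihomogeneous polynomials on $\PP^2_{(1,1,\alpha)}$ given in~\eqref{description-h0OD}, then applies B\'ezout's theorem on the weighted projective plane to extract the two inequalities; the bound $D\cdot C\geq 0$ in particular requires factoring out the $z$-power and controlling local intersection numbers at the base points of the blow-ups. Your argument bypasses all of this by passing to an effective representative $D'\sim D$ and pairing it with generic fibers of the two projections $\pi_S$ and $\pi_G$, using $G\sim\vdeg\,C$ to recover $\h$. This is shorter and more conceptual; the paper's approach, on the other hand, is not wasted effort, since the same polynomial bookkeeping is what yields the exact dimension formula in Theorem~\ref{thm-h0}. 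One minor remark: your closing aside about ``the possibly negative value of $\alpha$'' does not apply here, since in the paper's setup $q_i\in\{1,\dots,d_i-1\}$ forces $\alpha>0$.
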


\begin{proof}
We will use the intersection theory for weighted projective planes developed in~\cite[Proposition 5.2]{AMO-Intersection}.
Choose $H(x,y,z) \in H^0(S,\cO_S(D))$ different from zero.

Since $F$ is generic, $H$ and $F$ do not have common components and the intersection $H \cap F$ consists of a finite
number of points. Moreover,
\[
  \frac{d}{\alpha} = \frac{\deg (H) \cdot \deg (F)}{\alpha} = H \cdot F
  = \sum_{P \, \in \, \PP^2_{(1,1,\alpha)}} (H \cdot F)_P \geq (H \cdot F)_{[0:0:1]}
  = \frac{\ord(H(x,y,1))}{\alpha}.
\]
Therefore $d \geq \ord(H(x,y,1))$. Recall that $d = \sv\alpha + \hat{\sf}$
and, due to \eqref{description-h0OD}, $\ord(H(x,y,1)) \geq \hat{\sf} = \sf + a-\hat{a}$.
Hence $\sv\alpha \geq 0$ and
$D \cdot F = \sv= c+e \geq 0$.

Let us check now that $D \cdot C \geq 0$.
Assume $H(x,y,z) = z^m H'(x,y,z)$, $m\geq 0$, where $H'$ and $C = \{ z = 0 \}$ do not have
common components. According to~\eqref{description-h0OD},
\[
\begin{aligned}
& \deg (H'(x,y,z)) = \alpha (c-m) + \sum_{i=1}^r \left \lfloor \frac{eq_i+a_i}{d_i} \right\rfloor = \hat{\sf} + (\sv-m)\alpha, \\
& \ord ( H'(x,y,1) ) \geq \hat{\sf}, \\[7pt]
& \ord (H'(x^{d_i}+\gamma_i,1,z^{q_i})) \geq (\sv-m) q_i - \hat{a}_i, \quad \forall i = 1,\ldots,r.
\end{aligned}
\]
We apply Bézout's identity to $H'$ and $C$ and obtain
\begin{equation}\label{eq:bezout-GC}
  \hat{\sf} + (\sv-m)\alpha
  = \frac{\deg (H') \cdot \deg (C)}{\alpha}
  = H' \cdot C = \sum_{P \, \in \, \PP^2_{(1,1,\alpha)}} (H' \cdot C)_P
  \geq \sum_{i=1}^r (H' \cdot C)_{F_i \, \cap \, C}.
\end{equation}
It can be checked that $\mult_{A_i} (\pi^* (H')) = \ord (H'(x^{d_i}+\gamma_i,1,z^{q_i})) \leq d_i (H' \cdot C)_{F_i \, \cap \, C}$.
Indeed, since this is a local problem, one can assume $\gamma_i=0$, that is, $F_i = \{x=0\}$ and $C=\{z=0\}$.
If $(H' \cdot C)_{F_i \, \cap \, C} = n$, then $x^n$ is a term of $H'(x,1,z)$ and thus $\ord_{(d_i,q_i)} (H'(x,1,z)) \leq n d_i$.
The multiplicity of $\pi^* (H')$ along $A_i$ equals the $(d_i,q_i)$-order of $H'(x,1,z)$ because $\pi_i$ is nothing but the $(d_i,q_i)$-blow-up
at the point $F_i \cap C$.
Then, %
using~\eqref{eq:alpha}, one has
\begin{equation}\label{eq:bound-GC}
\begin{aligned}
& \sum_{i=1}^r (H' \cdot C)_{F_i \, \cap \, C}
\geq \sum_{i=1}^r \frac{\mult_{A_i} (\pi^* (H'))}{d_i}
\geq \sum_{i=1}^r \frac{(\sv-m)q_i - \hat{a}_i}{d_i} = (\sv-m)\alpha-\hat{a}.
\end{aligned}
\end{equation}
Combining~\eqref{eq:bezout-GC} and~\eqref{eq:bound-GC} gives $\hat{a} + \hat{f} = a + f \geq 0$ as desired.
\end{proof}

\begin{theorem}\label{thm-h0}
Let $D\sim c\, C+eE+\sum_{i=1}^r a_i A_i+\sf F$ be a Weil divisor on the normal surface $S$.
The dimension of $H^0(S,\cO_S(D))$ as a $\CC$-vector space is
\[
h^0(S,\cO_S(D)) = \sum_{j=0}^{\sv} \max\{b_j(D),0\},
\]
where
\[
b_j(D) := 1+\h-\sum_{i=1}^r \left\{ \frac{a_i+(j-c)q_i}{d_i} \right\}\in \ZZ.
\]
In particular, $h^0(S,\cO_S(D))$ does not depend on the position of the singular points of $S$,
but only on the singular types $\{(d_i;1,q_i)\}_{i=1}^r$ and the class of $D$ in $\cl(S)$.
\end{theorem}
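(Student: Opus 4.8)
The plan is to read off the dimension of $H^0(S,\cO_S(D))$ directly from the description~\eqref{description-h0OD} as a space of $(1,1,\alpha)$-quasihomogeneous polynomials. I would write such a polynomial as $H(x,y,z)=\sum_{j\geq 0}H_j(x,y)\,z^j$, where $H_j$ is homogeneous of degree $d-j\alpha=(\sv-j)\alpha+\hat{\sf}$ in $x,y$. The first task is to see that the second condition in~\eqref{description-h0OD} is exactly what truncates the range to $0\le j\le\sv$: on setting $z=1$ the pieces $H_j$ have pairwise distinct degrees $d-j\alpha$, so no cancellation occurs and $\ord(H(x,y,1))=\min\{d-j\alpha:H_j\neq0\}$; the inequality $\ord\ge\hat{\sf}$ then reads $(\sv-j)\alpha\ge0$, i.e.\ $H_j=0$ for $j>\sv$. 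This is the source of the outer sum $\sum_{j=0}^{\sv}$.

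Next I would convert the third, local condition into prescribed vanishing orders for each $H_j$. Under $x\mapsto x^{d_i}+\gamma_i$, $z\mapsto z^{q_i}$ the $j$-th graded piece becomes $H_j(x^{d_i}+\gamma_i,1)\,z^{jq_i}$, and since a simple zero of $H_j(x,1)$ at $x=\gamma_i$ becomes a zero of order $d_i$ at $x=0$, one has $\ord_x H_j(x^{d_i}+\gamma_i,1)=d_i\nu_{i,j}$ with $\nu_{i,j}:=\ord_{x=\gamma_i}H_j(x,1)$. Because the exponents $jq_i$ are pairwise distinct there is again no cancellation between different $j$, so $\ord\bigl(H(x^{d_i}+\gamma_i,1,z^{q_i})\bigr)=\min_j\{d_i\nu_{i,j}+jq_i\}$. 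Hence the condition decouples over $j$ and becomes $\nu_{i,j}\ge\frac{(\sv-j)q_i-\hat{a}_i}{d_i}$, that is $\nu_{i,j}\ge m_{i,j}:=\ceil{\frac{(\sv-j)q_i-\hat{a}_i}{d_i}}$; equivalently $H_j$ must be divisible by $\prod_{i=1}^r(x-\gamma_i y)^{m_{i,j}}$.

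The counting is then immediate and supplies the last sentence of the statement. As the conditions on the distinct $z$-graded pieces are independent, $h^0$ is the sum over $j$ of $\dim\{H_j\}$, and on $\PP^1$ the vanishing conditions at the \emph{distinct} points $\gamma_i$ are always independent, so $\dim\{H_j\}=\max\{(d-j\alpha)+1-\sum_i m_{i,j},\,0\}$ and is manifestly independent of the positions $\gamma_i$. The main point to check with care — and the place where a careless count would go wrong — is that no truncation $\max\{m_{i,j},0\}$ is needed: for $0\le j\le\sv$ one has $\sv-j\ge0$ and $0\le\hat{a}_i<d_i$, so $(\sv-j)q_i-\hat{a}_i>-d_i$, forcing $m_{i,j}\ge0$. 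Were some $m_{i,j}$ negative, the naive dimension and the stated formula would disagree, so this nonnegativity is exactly what makes $b_j(D)$ the right quantity.

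It remains to match $(d-j\alpha)+1-\sum_i m_{i,j}$ with $b_j(D)$. Using $\ceil{t}=t+\{-t\}$ gives $\sum_i m_{i,j}=(\sv-j)\alpha-\hat{a}+\sum_i\{\frac{\hat{a}_i-(\sv-j)q_i}{d_i}\}$, so that $(d-j\alpha)+1-\sum_i m_{i,j}=\hat{\sf}+\hat{a}+1-\sum_i\{\frac{\hat{a}_i-(\sv-j)q_i}{d_i}\}=\h+1-\sum_i\{\frac{\hat{a}_i-(\sv-j)q_i}{d_i}\}$, recalling $\h=\hat{\sf}+\hat{a}$. Finally $\sv=c+e$ together with $\hat{a}_i\equiv a_i+eq_i\pmod{d_i}$ yields $\frac{\hat{a}_i-(\sv-j)q_i}{d_i}\equiv\frac{a_i+(j-c)q_i}{d_i}\pmod 1$, so the fractional parts coincide and the bracket equals $b_j(D)$; in particular $b_j(D)\in\ZZ$ and depends only on the class of $D$ and on the types $(d_i;1,q_i)$. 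Summing $\max\{b_j(D),0\}$ over $0\le j\le\sv$ then gives the asserted value of $h^0(S,\cO_S(D))$.
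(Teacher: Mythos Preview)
Your proof is correct and follows essentially the same approach as the paper's: decompose $H$ by $z$-degree, use the second condition in~\eqref{description-h0OD} to truncate the sum at $j=\sv$, translate the third condition into divisibility of each $H_j$ by $\prod_i(x-\gamma_i y)^{m_{i,j}}$ with $m_{i,j}=\ceil{\frac{(\sv-j)q_i-\hat a_i}{d_i}}\ge 0$, and simplify the resulting dimension count to $b_j(D)$. You are in fact a bit more explicit than the paper in justifying why the two order conditions decouple across the graded pieces (no cancellation, since the $(x,y)$-degrees, respectively the $z$-exponents $jq_i$, are pairwise distinct).
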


\begin{proof}
We come back to the description given in~\eqref{description-h0OD} by considering $H(x,y,z)$ a 
generic $(1,1,\alpha)$-weighted homogeneous polynomial of degree $d$. Hence, let us write
\begin{equation}\label{eq:decomposition-H}
H(x,y,z) = \sum_{j \geq 0} h_{d-\alpha j}(x,y) z^j,
\end{equation}
where $h_{d-\alpha j}(x,y)$ is a homogeneous polynomial of degree $d-\alpha j$.

Let $j_{\max}=\lfloor \frac{d}{\alpha}\rfloor$ denote the maximum value of $j$ such that 
$d-\alpha j \geq 0$. Then $H(x,y,1) = \sum_{j \geq 0} h_{d-\alpha j}(x,y)$ and its order is
\[
\ord ( H(x,y,1) )
= d - \alpha j_{\max} = (c_D - j_{\max})\alpha + \hat{\sf},
\]
which is greater than or equal to $\hat{\sf}$ if and only if $j_{\max} \leq \sv$. 
Hence the sum in~\eqref{eq:decomposition-H} runs from $j=0$ to $j=\sv$.

The condition $\ord (H(x^{d_i}+\gamma_i,1,z^{q_i})) \geq \sv q_i - \hat{a}_i$,
$\forall i = 1,\ldots,r$, implies that $h_{d - \alpha j}(x,y)$ is of the form
\[
h_{d - \alpha j}(x,y) = \prod_{i=1}^r (x-\gamma_i y)^{m_{ij}} g_j(x,y),
\]
where $g_j(x,y)$ is a homogeneous polynomial of degree $d-\alpha j - \sum_{i=1}^r m_{ij}$, where 
\[
m_{ij} = \left\lceil \frac{(\sv-j) q_i - \hat{a}_i}{d_i} \right\rceil \geq 0.
\]
Since the degrees of freedom of $g_j(x,y)$
is its degree plus one if the degree is nonnegative, or zero otherwise, the required dimension is
\[
\sum_{j=0}^{\sv} \max \left\{ 1 + d - \alpha j - \sum_{i=1}^r m_{ij}, 0 \right\}.
\]
Note that
\[
\begin{aligned}
1+d-\alpha j-\sum_{i=1}^r m_{ij} & = 
1+\hat{\sf}+(\sv-j)\alpha - \sum_{i=1}^r\left\lceil \frac{(\sv-j) q_i - \hat{a}_i}{d_i} \right\rceil\\
& = 1+(\hat{\sf}+\hat{a})+ \sum_{i=1}^r \left( \frac{(\sv-j) q_i - \hat{a}_i}{d_i} - 
\left\lceil \frac{(\sv-j) q_i -\hat{a}_i}{d_i} \right\rceil \right)\\
& = 1+\h-\sum_{i=1}^r \left\{ \frac{\hat{a}_i+(j-\sv) q_i}{d_i}\right\} \\
&= 1+\h - \sum_{i=1}^r \left\{ \frac{a_i+(j-c) q_i}{d_i}\right\}.
\end{aligned}
\]
The last equality follows from the fact that 
\[
\frac{a_i+(j-c) q_i}{d_i}-\frac{\hat{a}_i+(j-\sv) q_i}{d_i}=
\frac{a_i+e q_i-\hat{a}_i}{d_i}\in \ZZ.
\]
Once a formula for computing $h^0(S,\cO_S(D))$ has been found, the last part of the statement easily follows.
\end{proof}

As a consequence one can determine the region of $L$ where $H^2(S,\cO_S(D)) \neq 0$ is concentrated.

\begin{cor}\label{cor:h2-OSD}
Using the previous notation, 
\begin{equation}
\label{H2} 
H^2(S,\cO_S(D)) \neq 0 \ \Rightarrow \ -\ell_D\in (\chi^{\textrm{orb}},2)+L_{\geq 0},
\end{equation}
see the middle part of Figure{\rm~\ref{fig:H012}}.
In this case,
\[
h^2(S,\cO_S(D)) = \sum_{j=0}^{-(2+\sv)} \max \{ b_j(K_S-D), 0 \}
\]
and
\[
b_j (K_S-D) = 1-(\h + \chi^{\textrm{orb}})-
\sum_{i=1}^r \left\{ \frac{-1-a_i + (c+j+1)q_i}{d_i} \right\}.
\]
\end{cor}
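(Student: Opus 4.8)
The plan is to reduce the entire statement to the two results already established for global sections, via Serre duality. Since $S$ has only cyclic quotient singularities it is Cohen--Macaulay (indeed $\QQ$-Gorenstein) and carries the dualizing sheaf $\cO_S(K_S)$, so Serre duality for rank-one reflexive sheaves on normal surfaces (the tool announced at the start of this section) gives
\[
H^2(S,\cO_S(D)) \cong H^0\bigl(S,\cO_S(K_S-D)\bigr)^{\vee}, \qquad h^2(S,\cO_S(D)) = h^0\bigl(S,\cO_S(K_S-D)\bigr).
\]
Thus both the support region and the explicit dimension are obtained by applying Proposition~\ref{two-directions} and Theorem~\ref{thm-h0}, respectively, to the divisor $K_S-D$.

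First I would fix a representative of $K_S-D$. Using $K_S \sim -(C+E)+(r-2)F-\sum_{i=1}^r A_i$ from Proposition~\ref{prop:canonical} together with $D \sim c\,C+eE+\sum_{i=1}^r a_i A_i+\sf F$, one writes
\[
K_S-D \sim (-1-c)\,C + (-1-e)\,E + \sum_{i=1}^r (-1-a_i)\,A_i + (r-2-\sf)\,F,
\]
so that the coefficient of $C$ is $-1-c$ and the coefficient of each $A_i$ is $-1-a_i$. I would then record the two intersection numbers $K_S\cdot F=-2$ and $K_S\cdot C=-\chi^{\textrm{orb}}$ from the Remark following Proposition~\ref{prop:canonical}, which yield
\[
(K_S-D)\cdot F = -2-\sv, \qquad (K_S-D)\cdot C = -\chi^{\textrm{orb}}-\h.
\]

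For the region statement, I would apply Proposition~\ref{two-directions} to $K_S-D$: non-vanishing of $H^0(S,\cO_S(K_S-D))$ forces $\ell_{K_S-D}\in L_{\geq 0}$, i.e. both coordinates $(K_S-D)\cdot C$ and $(K_S-D)\cdot F$ are nonnegative. Since $\ell_{K_S-D}=-\ell_D-(\chi^{\textrm{orb}},2)$ by the computation above, this is exactly the assertion $-\ell_D\in(\chi^{\textrm{orb}},2)+L_{\geq 0}$. For the dimension, I would feed the representative of $K_S-D$ into Theorem~\ref{thm-h0}: the summation range becomes $j=0,\dots,(K_S-D)\cdot F=-(2+\sv)$, and the quantity $b_j$ of that theorem evaluated at $K_S-D$ is
\[
b_j(K_S-D) = 1 + (K_S-D)\cdot C - \sum_{i=1}^r \left\{ \frac{(-1-a_i)+\bigl(j-(-1-c)\bigr)q_i}{d_i} \right\}.
\]
Substituting $(K_S-D)\cdot C=-\chi^{\textrm{orb}}-\h$ and simplifying $j-(-1-c)=c+j+1$ inside the fractional part yields precisely the claimed formula for $b_j(K_S-D)$.

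There is no genuine obstacle beyond careful bookkeeping. The one point that must be watched is to substitute the coefficient of $C$ (namely $-1-c$), and \emph{not} $\sv=c+e$, into the argument $(j-c)q_i$ of Theorem~\ref{thm-h0}; this is legitimate because that theorem is stated for an arbitrary, non-canonical representative $c\,C+eE+\cdots$. The only structural input is the validity of Serre duality for rank-one reflexive sheaves on $S$, which holds because its quotient singularities are Cohen--Macaulay.
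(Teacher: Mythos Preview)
Your proof is correct and follows essentially the same route as the paper: Serre duality reduces everything to $H^0(S,\cO_S(K_S-D))$, then Proposition~\ref{two-directions} gives the region and Theorem~\ref{thm-h0} gives the dimension, using the representative $(-1-c)C+(-1-e)E+\sum_i(-1-a_i)A_i+(r-2-\sf)F$. The paper's proof is terser on the final $b_j$ computation, but your explicit substitution (and the cautionary note about using the $C$-coefficient $-1-c$ rather than $\sv$) matches exactly what is needed.
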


\begin{proof}
By Serre's duality, $h^2(S,\cO_S(D)) = h^0(S,\cO_S(K_S-D))$.
Recall that the canonical divisor of $S$ is $K_S = - C - E - \sum_{i=1}^r A_i + (r-2)F$.
Then,
\[
K_S - D = (-1-c)C + (-1-e)E + \sum_{i=1}^r (-1-a_i)A_i + (r-2-\sf)F.
\]
From Proposition~\ref{two-directions},
$h^0(S,\cO_S(K_S-D)) \neq 0$ implies that $(K_S - D) \cdot F \geq 0$ and
$(K_S - D) \cdot C \geq 0$, or equivalently, $D \cdot F \leq K_S \cdot F = -2$
and $D \cdot C \leq K_S \cdot C = - \sum_{i=1}^r \frac{1}{d_i} + r-2 = -\chi^{\textrm{orb}}$,
as claimed.

The second part of the statement follows from Theorem~\ref{thm-h0} 
applied to $K_S - D$.
\end{proof}

\subsection{Euler characteristic}\label{subsec:euler}
\mbox{}

The main purpose of this section is to compute the Euler characteristic of the sheaf $\cO_S(D)$.
As above assume that
\[
D \sim c\, C + eE + \sum_{i=1}^r a_i A_i+\sf F,
\]
where $c,e,\sf,a_i \in \ZZ$, $i=1,\ldots,r$ and recall that $K_S = - C - E - \sum_{i=1}^r A_i + (r-2)F$.
In order to calculate $\chi(S,\cO_S(D))$, we will use the Riemann-Roch formula on singular normal surfaces
from~\cite[\S 1.2]{Blache95}, see also~\cite{jiJM-correction}, that is,
\begin{equation}
\label{eq:RRF}
\chi(S,\cO_S(D)) = \chi(S,\cO_S) + \frac{D \cdot (D - K_S)}{2} + R_S(D).
\end{equation}

Recall that the correction term $R_S(D)$ is a sum of local invariants associated with each singular point $P$ 
of $S$ and the local class of $D$ at $P$. In our case
\begin{equation}
\label{eq:RS}
R_S(D)=\sum_{i=1}^r R_{S_i^{+}}(D)+\sum_{i=1}^r R_{S_i^{-}}(D),
\end{equation}
where $S_i^{\pm}$ denotes the local singularity type~$\frac{1}{d_i}(1,\pm q_i)$.

First note that $S$ is a rational surface (it is birationally equivalent to a weighted projective plane)
and hence~$\chi(S,\cO_S)=1$.
Also, using the fact that $C$ and $E$ are numerically equivalent to each other, $D$ and $K_S$ can numerically 
be described as
\[
D \equiv \sv\, C + \h F, \qquad K_S \equiv -2C -\chi^{\textrm{orb}} F.
\]
Then, 
\[
\begin{aligned}
& D^2 = 2\sv\, \h, \\
& D \cdot K_S = -\sv\, \chi^{\textrm{orb}} - 2\h, \\
& D^2 - D \cdot K_S = 2(\sv+1) \h + \sv\, \chi^{\textrm{orb}},
\end{aligned}
\]
and therefore
\begin{equation}\label{eq:chi-OSD}
\begin{aligned}
\chi(S,\cO_S(D))
&= 1 + (\sv+1) \h + \frac{1}{2}\sv\, \chi^{\textrm{orb}} + R_S(D). \\
\end{aligned}
\end{equation}

In order to continue with this calculation, we need to understand the local contribution of each $R_{S_i^{\pm}}(D)$ to the correction term $R_S(D)$. 

Consider $d,q \in \ZZ$ any three integers and denote by $S^\pm$ the cyclic singularity 
$\frac{1}{d}(1,\pm q)$. Since $R_{S^\pm}(D)$ only depends on the local class of the divisor $D$, 
we can consider it as a map $R_{S^\pm}:\Weil(S^\pm)/ \Cart(S^\pm)\cong\ZZ/d\ZZ \to \QQ$.
The following result gives a closed formula for the combined contribution $R_{S^+}(n) + R_{S^-}(n-mq)$
with the convention that 
$\sum_{j=k_1}^{k_2}f(j)=0$ if $k_1>k_2$.

\begin{lemma}\label{lemma:RR}
Under the conditions above, let $n,m \in \ZZ$ be such that $m \geq -1$. Then,
\[
R_{S^+}(n) + R_{S^-}(n-mq) = - \sum_{j=0}^{m} \left\{ \frac{n+(j-m)q}{d} \right\} + m \frac{d-1}{2d}.
\]
\end{lemma}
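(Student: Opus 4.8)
The plan is to prove the identity by induction on $m\geq -1$, organizing the argument so that the inductive step collapses to a statement about a \emph{single} cyclic quotient singularity. Write $\Phi(n,m):=R_{S^+}(n)+R_{S^-}(n-mq)$ for the left-hand side and
\[
G(n,m):=-\sum_{j=0}^{m}\left\{\frac{n+(j-m)q}{d}\right\}+m\,\frac{d-1}{2d}
\]
for the right-hand side. First I would compute $G(n,m)-G(n,m-1)$: reindexing the sum defining $G(n,m-1)$ by $k=j+1$ rewrites it as $\sum_{k=1}^{m}\{\frac{n+(k-m)q}{d}\}$, and these terms cancel against the terms $j=1,\dots,m$ of the sum defining $G(n,m)$, leaving
\[
G(n,m)-G(n,m-1)=-\left\{\frac{n-mq}{d}\right\}+\frac{d-1}{2d}.
\]
On the left, since $R_{S^+}(n)$ does not change with $m$, one has $\Phi(n,m)-\Phi(n,m-1)=R_{S^-}(n-mq)-R_{S^-}(n-(m-1)q)$. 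Thus, setting $N:=n-mq$, the inductive step is \emph{exactly} the shift recurrence
\[
R_{S^-}(N)-R_{S^-}(N+q)=-\left\{\frac{N}{d}\right\}+\frac{d-1}{2d},
\]
a local statement about the correction term of the singularity $\frac{1}{d}(1,-q)$ alone.

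For the base case $m=-1$ the empty–sum convention gives $G(n,-1)=-\frac{d-1}{2d}$, so it remains to establish the reciprocity identity
\[
R_{S^+}(n)+R_{S^-}(n+q)=-\frac{d-1}{2d},
\]
asserting that the combined contribution of the two \emph{opposite} singularities $\frac{1}{d}(1,q)$ and $\frac{1}{d}(1,-q)$ is the constant $-\frac{d-1}{2d}$, independent of $n$. Note that, granting the shift recurrence, the base value $m=-1$ and the value $m=0$ (which reads $R_{S^+}(n)+R_{S^-}(n)=-\{n/d\}$) are equivalent: their difference is precisely the shift recurrence evaluated at $N=n$. Hence it suffices to verify the shift recurrence together with \emph{one} base identity.

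Both remaining identities are purely local, and I would verify them from the explicit closed form of $R_{S^{\pm}}$ provided in~\cite{Blache95} (corrected in~\cite{jiJM-correction}) and used in~\cite{ACM19}, which expresses $R_{S^{\pm}}(n)$ as a Dedekind–sum–type quantity in $d,q,n$ assembled from fractional parts. The shift recurrence I expect to follow by substituting $N\mapsto N+q$ and telescoping the fractional–part terms, using the elementary relations $\{x\}+\{-x\}=1$ for $x\notin\ZZ$ and $\sum_{k=0}^{d-1}\{k/d\}=\frac{d-1}{2}$ to produce the additive constant $\frac{d-1}{2d}$. The base reciprocity, being $n$-independent, should reduce to the antisymmetry $s(-q,d)=-s(q,d)$ of the associated Dedekind sums, so that the $n$-dependent pieces cancel and only the constant $-\frac{d-1}{2d}$ survives. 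The main obstacle I anticipate is bookkeeping rather than conceptual: pinning down the additive constants exactly, since correction-term formulas in the literature differ by normalization conventions, and one must work throughout in the normalization for which the Riemann–Roch formula~\eqref{eq:RRF} holds on $S$. Once the two local identities are secured with the correct constants, the induction assembles them into the stated formula for all $m\geq -1$.
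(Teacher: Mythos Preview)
Your approach is correct and essentially the same as the paper's: both argue by induction on $m$, and both ultimately rest on the two local identities $R_{S^+}(n)+R_{S^-}(n)=-\{n/d\}$ and $R_{S^+}(n)+R_{S^-}(n+q)=-\tfrac{d-1}{2d}$ (equivalently, your shift recurrence plus one base case). Your organization is in fact a bit tidier---you isolate the inductive step as a one-variable recurrence for $R_{S^-}$ alone, whereas the paper's step from $m$ to $m+1$ juggles three instances of the formula simultaneously. The only substantive difference is in how the two base identities are secured: the paper simply invokes \cite[Proposition~2.16(1)--(2)]{campillo}, where they are already proved, while you propose to rederive them from the Dedekind-sum closed form of $R_{S^\pm}$. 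That route is viable but, as you correctly anticipate, the normalization bookkeeping is the entire difficulty; citing \cite{campillo} sidesteps it.
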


\begin{proof}
For $m \geq 0$, we proceed by induction on $m$. The first case, namely 
$R_{S^+}(n) + R_{S^-}(n)= - \left\{ \frac{n}{d} \right\}$, was already proven 
in~\cite[Proposition 2.16(2)]{campillo}. To be precise, in \emph{loc.~cit.}~the result was stated 
in terms of the so-called $\Delta$-invariant. In this context, the relationship between $\Delta$ 
and $R$ is simply given by $R_{S^+}(n) = -\Delta_{S^+}(-n)$ (see~\cite[Introduction]{jiJM-correction}).
Similarly $m=1$ is a direct consequence of~\cite[Proposition 2.16(1)]{campillo} and the case $m=0$.

Assume the result is true for $m \geq 1$ and we will prove it for $m+1$. The cases $m=1$, $m=0$,
and the induction hypothesis tell us respectively that 
\[
\begin{aligned}
R_{S^+}(n) &= - R_{S^-}(n-q) - \left\{ \frac{n-q}{d} \right\} - \left\{ \frac{n}{d} \right\} + \frac{d-1}{2d}, \\
0 &= R_{S^+}(n-q) + R_{S^-}(n-q) + \left\{ \frac{n-q}{d} \right\}, \\
R_{S^-}(n-q-mq) &= - R_{S^+}(n-q) - \sum_{j=0}^m \left\{ \frac{n-q+(j-m)q}{d} \right\} + m \frac{d-1}{2d}.
\end{aligned}
\]
Adding up these three equations gives the result for $m+1$.

It remains to prove the case $m=-1$, that is, $R_{S^+}(n) + R_{S^-}(n+q) = - \frac{d-1}{2d}$,
which is again a reformulation of~\cite[Proposition 2.16(1)]{campillo}.
\end{proof}

\begin{theorem}\label{thm:chi-OSD}
\[
\chi(S,\cO_S(D)) =
\begin{cases}
\displaystyle\sum_{j=0}^{\sv} b_j(D)  & \text{if} \quad \sv \geq 0, \\[0.75cm]
0 & \text{if} \quad \sv=-1, \\[0.30cm]
\displaystyle\sum_{j=0}^{-(\sv+2)} b_j(K_S-D) & \text{if} \quad \sv \leq -2.
\end{cases}
\]
\end{theorem}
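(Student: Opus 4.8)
The plan is to start from the Riemann--Roch expression \eqref{eq:chi-OSD}, namely
$\chi(S,\cO_S(D)) = 1 + (\sv+1)\h + \tfrac{1}{2}\sv\,\chi^{\textrm{orb}} + R_S(D)$ with
$R_S(D) = \sum_{i=1}^r\bigl(R_{S_i^{+}}(D) + R_{S_i^{-}}(D)\bigr)$, and to evaluate the correction term $R_S(D)$ explicitly by feeding Lemma~\ref{lemma:RR} with the local divisor classes of $D$ at the singular points. First I would read off these local classes from the quotient presentation of Lemma~\ref{lemma:S}: writing $D\sim cC+eE+\sum_{i=1}^r a_iA_i+\sf F$, the class of $D$ at the point $Q_i$ of type $\frac{1}{d_i}(1,q_i)$ is $a_i+eq_i\bmod d_i$, while its class at $P_i$ of type $\frac{1}{d_i}(1,-q_i)$ is $a_i-cq_i\bmod d_i$. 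Since $\sv=c+e$, these two classes differ exactly by $\sv q_i$, so for each $i$ they fit the pattern $(n,\,n-mq)$ of Lemma~\ref{lemma:RR} with $n=a_i+eq_i$, $q=q_i$, $d=d_i$ and $m=\sv$.

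Assuming $\sv\geq-1$ (so that $m=\sv\geq-1$ is admissible), I would apply Lemma~\ref{lemma:RR} to each pair, obtaining
\[
R_{S_i^{+}}(D)+R_{S_i^{-}}(D) = -\sum_{j=0}^{\sv}\left\{\frac{a_i+(j-c)q_i}{d_i}\right\} + \sv\,\frac{d_i-1}{2d_i}.
\]
Summing over $i$ and using $\sum_{i=1}^r\frac{d_i-1}{d_i}=2-\chi^{\textrm{orb}}$ from \eqref{eq:beta}, the curvature terms combine so that $\tfrac{1}{2}\sv\,\chi^{\textrm{orb}}+R_S(D)=\sv-\sum_{j=0}^{\sv}\sum_{i=1}^r\{\frac{a_i+(j-c)q_i}{d_i}\}$. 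Substituting back into \eqref{eq:chi-OSD} and regrouping gives $\chi(S,\cO_S(D))=(\sv+1)(1+\h)-\sum_{j=0}^{\sv}\sum_{i=1}^r\{\frac{a_i+(j-c)q_i}{d_i}\}=\sum_{j=0}^{\sv}b_j(D)$, which is the first case. The case $\sv=-1$ drops out of the very same computation: the range $\sum_{j=0}^{\sv}$ is empty (this is the $m=-1$ instance of Lemma~\ref{lemma:RR}), leaving $R_S(D)=-\tfrac{1}{2}\sum_i\frac{d_i-1}{d_i}=\tfrac{1}{2}(\chi^{\textrm{orb}}-2)$, and a direct cancellation in \eqref{eq:chi-OSD} yields $\chi(S,\cO_S(D))=0$.

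For the remaining case $\sv\leq-2$ I would invoke Serre duality (already used in Corollary~\ref{cor:h2-OSD}), which gives $h^0(\cO_S(D))=h^2(\cO_S(K_S-D))$ and $h^2(\cO_S(D))=h^0(\cO_S(K_S-D))$ while $h^1$ is self-dual, hence $\chi(S,\cO_S(D))=\chi(S,\cO_S(K_S-D))$. Since $(K_S-D)\cdot F=-2-\sv\geq0$, the divisor $K_S-D$ lies in the range already settled, and the first case applied to $K_S-D$ produces $\sum_{j=0}^{-(\sv+2)}b_j(K_S-D)$, as desired. Thus the three cases are really a single computation for $\sv\geq-1$ together with one application of Serre duality.

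The step I expect to be the main obstacle is the first one: correctly identifying the local Weil class of $D$ at each of the $2r$ quotient singularities and reconciling its normalization with the convention under which $R_{S^\pm}$ is tabulated in Lemma~\ref{lemma:RR} (ultimately the $\Delta$-invariant normalization of \cite{campillo}). The delicate point is that the factor $q_i$ attached to the section coefficients, together with the sign difference between the $\frac{1}{d_i}(1,q_i)$ and $\frac{1}{d_i}(1,-q_i)$ types, must combine so that the two local classes differ precisely by $\sv q_i=mq$; one must check that the generator of $\Weil/\Cart\cong\ZZ/d_i$ used to define $R_{S^\pm}$ is exactly the one making the arguments $a_i+eq_i$ and $a_i-cq_i$ appear. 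Once this bookkeeping is pinned down, the remaining manipulations are the routine algebraic simplifications sketched above.
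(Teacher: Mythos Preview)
Your proposal is correct and follows essentially the same route as the paper's own proof: identify the local classes of $D$ at $Q_i$ and $P_i$ as $a_i+eq_i$ and $a_i-cq_i$, apply Lemma~\ref{lemma:RR} with $n=a_i+eq_i$, $m=\sv$, combine with~\eqref{eq:chi-OSD} using $\sum_i\frac{d_i-1}{d_i}=2-\chi^{\textrm{orb}}$, and finish the case $\sv\leq-2$ by Serre duality. The bookkeeping concern you flag about the normalization of $R_{S^\pm}$ is exactly the point the paper handles tersely, and your reading of it is the correct one.
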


\begin{proof}
The result will follow after combining~\eqref{eq:RRF},~\eqref{eq:RS},~\eqref{eq:chi-OSD},
and Lemma~\ref{lemma:RR}.
\[
\begin{aligned}
R_S(D)
&= \sum_{i=1}^r \left( R_{S_i^+}(D) + R_{S_i^-}(D) \right) \\
&= \sum_{i=1}^r \left( R_{S_i^+}(eE+a_iA_i) + R_{S_i^-}(c\, C+a_iA_i) \right) \\
&= \sum_{i=1}^r \left( R_{S_i^+}(a_i+eq_i) + R_{S_i^-}(a_i-cq_i) \right). \\
\end{aligned}
\]
By Lemma~\ref{lemma:RR}, applied to $d=d_i$, $q=q_i$, $n=a_i+eq_i$, $m=\sv \geq -1$, 
each summand can be rewritten in terms of fractional parts so that
\[
R_S(D) = \sum_{i=1}^r \left( - \sum_{j=0}^{\sv} \left\{ \frac{a_i+(j-c)q_i}{d_i} \right\} + 
\sv\, \frac{d_i-1}{2d_i} \right).
\]
Hence
\[
\begin{aligned}
\chi(S,\cO_S(D))
&= 1+(\sv+1)\h+\frac{1}{2}\sv\, \chi^{\textrm{orb}} - \sum_{i=1}^r \sum_{j=0}^{\sv} 
\left\{ \frac{a_i+(j-c)q_i}{d_i} \right\} + \frac{1}{2}\sv\, \sum_{i=1}^r \frac{d_i-1}{d_i}\\
&= 1+\frac{1}{2}\sv\, \left( \chi^{\textrm{orb}} + \sum_{i=1}^r \frac{d_i-1}{d_i}\right)
+ \sum_{j=0}^{\sv} \left( \h -\sum_{i=1}^r\left\{ \frac{a_i+(j-c)q_i}{d_i} \right\}\right) \\
&= \sum_{j=0}^{\sv} b_j(D),
\end{aligned}
\]
as claimed.

In particular, when $\sv=-1$, one has $\chi(S,\cO_S(D))=0$.
The last part of the statement follows from Serre's duality
and the fact that $D \cdot F = \sv \leq -2$ implies that $(K_S-D) \cdot F = -(\sv+2) \geq 0$.
\end{proof}

\subsection{General vanishing results}\label{sec:1st-cohomology}
\mbox{}

According to Theorem~\ref{thm-h0}, Corollary~\ref{cor:h2-OSD}, and Theorem~\ref{thm:chi-OSD}, 
the Euler characteristic of the sheaf $\cO_S(D)$ coincides with the dimension of $H^0$ 
(resp.~$H^2$) as long as $b_j(D) \geq 0$ (resp. $b_j(K_S-D) \geq 0$), $\forall j$.
In this section we will investigate when each one of these conditions hold. This will affect the vanishing 
of the first cohomology group. As above we assume that
\begin{equation}\label{eq:ntc}
D \sim c\, C + eE + \sum_{i=1}^r a_i A_i + \sf F \sim \sv\, C + \sum_{i=1}^r \hat{a}_i A_i + \hat{\sf} F,
\end{equation}
where $(\sv,\{\hat{a}_i\}_{i=1,...,r}, \hat{\sf})$ are uniquely determined by its canonical form 
(see Remark~\ref{rem:hats}), and $\ell_D=(C\cdot D,F\cdot D)=(\h,\sv)$ (see~\ref{sec:lattice}).

Figure~\ref{fig:H012} depicts a projection of $D\in\cl(X)$ onto the divisor lattice $(\h,\sv)\in L$ 
(see Definition~\ref{sec:lattice}) and it describes regions where cohomological triviality is assured for 
all divisor classes corresponding to each value $(\varphi,c)\in L$. 
Combining these, one obtains in Figure~\ref{fig:H012a} four cones where cohomology is concentrated in one degree. 

The following result is obtained combining the previous results on global sections and Euler characteristics.
It describes the general regions of $L$ where the cohomology is concentrated in a single degree. The 
statement of the following theorem is 
summarized in the right-hand side of Figure~\ref{fig:H012}.

\begin{theorem}\label{prop:h1}
Let $D$ be as in~\eqref{eq:ntc}. Then the following holds: 
\begin{enumerate}[label=\rm(\arabic{enumi})]
\item\label{prop:h1:1} 
If $\ell_D\in (-\chi^{\textrm{orb}},-2)+L_{>0}$, then $h^*(S,\cO_S(D))$ is concentrated in degree~$0$.

\item\label{prop:h1:2} 
If $-\ell_D\in L_{>0}$, then $h^*(S,\cO_S(D))$ is concentrated in degree~$2$.

\item\label{prop:h1:4} 
If $\ell_D=(\h,\sv)\in L$ with either
\begin{enumerate}[label=\rm(\roman{enumii})]
\item $\h< 0$ and $\sv\geq 0$, or
\item $\h>-\chi^{\textrm{orb}}$ and $\sv\leq -2$,
\end{enumerate}
then $h^*(S,\cO_S(D))$ is concentrated in degree~$1$.
\item\label{prop:h1:3} 
If $\sv = -1$, then $h^i(S,\cO_S(D))=0$ for $i=0,1,2$.
\end{enumerate}
\end{theorem}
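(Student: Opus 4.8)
The plan is to establish each case by comparing the Euler characteristic $\chi(S,\cO_S(D))$ (Theorem~\ref{thm:chi-OSD}) against the dimensions $h^0$ and $h^2$ (Theorem~\ref{thm-h0} and Corollary~\ref{cor:h2-OSD}), exploiting that all three quantities are expressed through the same integers $b_j(D)$. The key observation is that $\chi = h^0 - h^1 + h^2$, so whenever I can show that either $h^2=0$ and $\chi = h^0$, or $h^0=0$ and $\chi=h^2$, it follows immediately that $h^1=0$ and the cohomology is concentrated in a single degree. The whole argument reduces to controlling the signs of $b_j(D)$ and $b_j(K_S-D)$ in each region of the divisor lattice $L$.

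First I would dispatch the extreme cases~\ref{prop:h1:1} and~\ref{prop:h1:2}. For~\ref{prop:h1:1}, when $\ell_D\in(-\chi^{\textrm{orb}},-2)+L_{>0}$, the point $-\ell_D$ lies strictly outside the region $(\chi^{\textrm{orb}},2)+L_{\geq 0}$ of Corollary~\ref{cor:h2-OSD}, so $h^2(S,\cO_S(D))=0$. It then suffices to check that every $b_j(D)\geq 0$ for $0\leq j\leq \sv$: since $\h>-\chi^{\textrm{orb}}$ forces $\h$ to be large enough that the fractional-part sum $\sum_i\{\cdots\}<1+\h$, each summand in Theorem~\ref{thm-h0} is nonnegative, whence $h^0=\chi$ and~$h^1=0$. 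The case~\ref{prop:h1:2} is the Serre-dual mirror: $-\ell_D\in L_{>0}$ places $\ell_D$ outside $L_{\geq 0}$, so $h^0=0$ by Proposition~\ref{two-directions}, and applying the bound for $b_j(K_S-D)$ shows $h^2=\chi$ and again $h^1=0$.

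For the degree-$1$ concentration in~\ref{prop:h1:4}, the strategy is to show that \emph{both} $h^0$ and $h^2$ vanish while $\chi\leq 0$, forcing $h^1=-\chi\geq 0$. In subcase~(i), $\h<0$ together with $\sv\geq 0$ gives $\ell_D\notin L_{\geq 0}$ (the first coordinate is negative), so $h^0=0$ by Proposition~\ref{two-directions}; simultaneously $\sv\geq 0$ keeps $-\ell_D$ out of the $H^2$-region, so $h^2=0$. Subcase~(ii) is the Serre-dual statement, handled by applying~(i) to $K_S-D$ and using $h^i(S,\cO_S(D))=h^{2-i}(S,\cO_S(K_S-D))$. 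Finally~\ref{prop:h1:3} is the cleanest: when $\sv=-1$ Theorem~\ref{thm:chi-OSD} gives $\chi=0$ outright, and both $h^0$ and $h^2$ vanish because neither $\ell_D$ nor $-\ell_D$ can sit in the relevant quadrant when $\sv=-1$ (the summation ranges $0\leq j\leq \sv$ and $0\leq j\leq -(2+\sv)$ are both empty), so all three cohomology groups vanish.

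The main obstacle I anticipate is the boundary bookkeeping in~\ref{prop:h1:1}: verifying that the strict inequality $\h>-\chi^{\textrm{orb}}$ genuinely forces \emph{every} $b_j(D)$ to be nonnegative, rather than merely their sum, since Theorem~\ref{thm-h0} only equals $\chi$ when no $\max\{b_j(D),0\}$ truncation is active. This requires the elementary but careful estimate $\sum_{i=1}^r\{\frac{a_i+(j-c)q_i}{d_i}\}<1+\h$ uniformly in $j$, which follows because each fractional part is bounded by $\frac{d_i-1}{d_i}$ and the hypothesis on $\h$ relative to $\chi^{\textrm{orb}}=2-\sum_i(1-\frac{1}{d_i})$ is precisely calibrated to absorb this sum. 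Once this uniform sign control is in place, the remaining cases follow formally by Serre duality and the sign of $\chi$.
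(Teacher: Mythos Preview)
Your proposal is correct and follows essentially the same approach as the paper: use the region conditions from Proposition~\ref{two-directions} and Corollary~\ref{cor:h2-OSD} to kill $h^0$ or $h^2$, then compare $h^0$ (resp.~$h^2$) to $\chi$ via the uniform sign of $b_j(D)$ (resp.~$b_j(K_S-D)$), with the key estimate being exactly the bound $b_j(D)\geq \h+\chi^{\textrm{orb}}-1$ that the paper records as~\eqref{bjD-bound}. Your identification of the ``main obstacle'' and its resolution via $\sum_i\frac{d_i-1}{d_i}=2-\chi^{\textrm{orb}}$ is precisely the paper's argument.
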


\begin{proof}
To prove~\ref{prop:h1:1}, let us assume $\h > -\chi^{\textrm{orb}}$ and $\sv > -2$.
Note that the maximum value of $\{ \frac{n}{d_i} \}$ is reached when $n=d_i-1$. Then,
\begin{equation}\label{bjD-bound}
b_j(D) = 
1+\h-\sum_{i=1}^r \left\{ \frac{a_i+(j-c)q_i}{d_i} \right\}
\geq 1+ \h -\sum_{i=1}^r \frac{d_i-1}{d_i}
=\h + \chi^{\textrm{orb}}- 1.
\end{equation}
Under the current hypotheses this number is an integer strictly greater than $-1$ and hence $\max\{ b_j(D), 0 \} = b_j(D)$.
By Theorems~\ref{thm-h0} and~\ref{thm:chi-OSD}, $h^0 (S,\cO_S(D)) = \chi(S,\cO_S(D))$.
Corollary~\ref{cor:h2-OSD} provides the vanishing of $H^2(S,\cO_S(D))$ and thus the vanishing of~$H^1(S,\cO_S(D))$.

Part~\ref{prop:h1:2} is a consequence of Serre's duality. 

Part~\ref{prop:h1:4} follows from~\eqref{H0} in Proposition~\ref{two-directions} and 
\eqref{H2} in Corollary~\ref{cor:h2-OSD}.

Finally, for part~\ref{prop:h1:3} note that,
if $\sv=-1$, then $h^0(S,\cO_S(D)) = h^2(S,\cO_S(D)) = 0$ from 
Proposition~\ref{two-directions}
and Corollary~\ref{cor:h2-OSD}, respectively. Then $h^1(S,\cO_S(D)) = - \chi(S,\cO_S(D)) = 0$
from Theorem~\ref{thm:chi-OSD} and the result follows.
\end{proof}

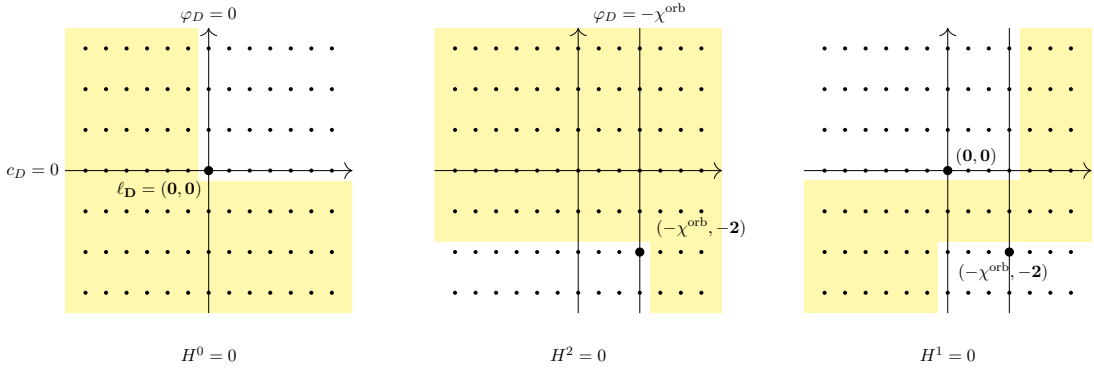
\begin{figure}[ht]
\begin{center}
\scalebox{.6}
{
﻿\begin{tikzpicture}[scale=.9]
\fill[color=yellow!40!white] (-3.5,3.5)--(-3.5,-3.5)--(3.5,-3.5)--(3.5,-.25)--(-.25,-.25)--(-.25,3.5)--cycle;
\foreach \a in {-3,-2.5,...,3}
{
\foreach \b in {-3,-2,...,3}
{
\fill[black] (\a,\b) circle [radius=.05cm];
}
}
\draw[-{[scale=2]>}] (-3.5,0)--(3.5,0) node[left,pos=0] {$\sv=0$};
\draw[-{[scale=2]>}]  (0,-3.5)--(0,3.5) node[above] {$\h=0$};
\node[below] at (-1.2,-.1) {$\mathbf{\ell_D=(0,0)}$};
\draw[fill=black] (0,0) circle[radius=.1];
\node at (0,-4.5) {$H^0=0$};

\begin{scope}[xshift=9cm]
\fill[color=yellow!40!white] (-3.5,3.5)--(-3.5,-1.75)--(1.75,-1.75)--(1.75,-3.5)--(3.5,-3.5)--(3.5,3.5)--cycle;
\foreach \a in {-3,-2.5,...,3}
{
\foreach \b in {-3,-2,...,3}
{
\fill[black] (\a,\b) circle [radius=.05cm];
}
}
\draw[-{[scale=2]>}] (-3.5,0)--(3.5,0);
\draw[-{[scale=2]>}]  (0,-3.5)--(0,3.5) ;
\node[below] at (3,-1) {$\mathbf{(-\chi^{\textrm{orb}},-2)}$};
\draw[fill=black] (1.5,-2) circle[radius=.1];
\node at (0,-4.5) {$H^2=0$};
\draw (1.5,-3.5) -- (1.5,3.5) node[above] {$\h=-\chi^{\textrm{orb}}$};
\end{scope}

\begin{scope}[xshift=18cm]
\fill[color=yellow!40!white] (1.75,3.5)--(1.75,-1.75)--(3.5,-1.75)--(3.5,3.5)--cycle;
\fill[color=yellow!40!white] (-3.5,-3.5)--(-.25,-3.5)--(-.25,-.25)--(-3.5,-.25)--cycle;
\fill[color=yellow!40!white] (-.25,-1.75) rectangle (1.75,-.25);
\foreach \a in {-3,-2.5,...,3}
{
\foreach \b in {-3,-2,...,3}
{
\fill[black] (\a,\b) circle [radius=.05cm];
}
}
\draw[-{[scale=2]>}] (-3.5,0)--(3.5,0);
\draw[-{[scale=2]>}]  (0,-3.5)--(0,3.5);

\node[below] at (.7,.7) {$\mathbf{(0,0)}$};
\node[below] at (1.35,-2.1) {$\mathbf{(-\chi^{\textrm{orb}},-2)}$};
\draw[fill=black] (1.5,-2) circle[radius=.1];
\draw[fill=black] (0,0) circle[radius=.1];
\node at (0,-4.5) {$H^1=0$};
\draw (1.5,-3.5) -- (1.5,3.5);
\end{scope}
\end{tikzpicture}
}
\caption{Vanishing of $H^j$.}
\label{fig:H012}
\end{center}
\end{figure}

\subsection{Special cases}\label{subsec:special}
\mbox{}

For our goal in \S\ref{sec:main} we are interested in finding explicit formulas for $h^1(S,\cO_S(D))$ when $-\ell_D\in L_{\geq 0}$.
By Theorem~\ref{prop:h1}, one has $h^1(S,\cO_S(D))=0$ whenever $-\ell_D\in L_{> 0}$. 
The rest of this section will be devoted to the special case $-\ell_D\in L_{\geq 0}$ and either $\h=0$ or $\sv=0$.

The simplest case, which we want to exclude, is $D\sim 0$, 
for which $h^0(S,\cO_S(D))=1$ and $h^1(S,\cO_S(D))=h^2(S,\cO_S(D))=0$.
Note that in this case $\h=\sv=0$.

Let us first consider the case $\sv = 0$, $\h \leq 0$.
Using the canonical form of $D$ and using $\sv=0$, one has 
\begin{equation}\label{eq:D0<}
D\sim \sum_{i=1}^r \hat{a}_i A_i + \hat{\sf} F.
\end{equation}
One has the following explicit formula for $h^1(S,\cO_S(D))$.

\begin{prop}\label{prop:h10<}
Let $D$ be 
such that
$-\ell_D=(-\h,0)\in L_{\geq 0}$ and $D\not\sim 0$.
Then, 
\[
h^0(S,\cO_S(D))=h^2(S,\cO_S(D))=0
\]
and 
\begin{equation}\label{eq:axis1}
h^1(S,\cO_S(D))
= -1 - \sum_{i=1}^r \left\lfloor \frac{\hat{a}_i}{d_i} \right\rfloor - \hat{\sf}.
\end{equation}
In particular, if $\ell_D=(0,0)$, then
\begin{equation}\label{eq:originNot0}
h^1(S,\cO_S(D))
=-1 + \sum_{i=1}^r \left\{ \frac{\hat{a}_i}{d_i} \right\}.
\end{equation}
\end{prop}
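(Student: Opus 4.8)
The plan is to reduce the whole statement to the Euler characteristic, once $H^0$ and $H^2$ have been shown to vanish. The hypothesis $-\ell_D=(-\h,0)\in L_{\geq 0}$ means precisely $\sv=D\cdot F=0$ and $\h=D\cdot C\leq 0$, so by~\eqref{eq:D0<} the canonical form of $D$ is $D\sim\sum_{i=1}^r\hat{a}_iA_i+\hat{\sf}F$ with $0\leq\hat{a}_i<d_i$. I write $\hat{a}:=\sum_{i=1}^r\frac{\hat{a}_i}{d_i}$, so that $\h=\hat{\sf}+\hat{a}$ as in Remark~\ref{rem:hats}.

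I would first dispose of $H^2$, which is the easy half. Since $(K_S-D)\cdot F=K_S\cdot F-\sv=-2<0$, the class $\ell_{K_S-D}$ lies outside $L_{\geq 0}$; by Serre duality $h^2(S,\cO_S(D))=h^0(S,\cO_S(K_S-D))$, which vanishes by Proposition~\ref{two-directions}. Equivalently, the second coordinate of $-\ell_D$ is $0<2$, so Corollary~\ref{cor:h2-OSD} gives the vanishing directly.

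The vanishing of $H^0$ is where I expect the only real subtlety, since it is the step that genuinely uses $D\not\sim 0$. Because $\sv=0$, Theorem~\ref{thm-h0} collapses to $h^0(S,\cO_S(D))=\max\{b_0(D),0\}$. Evaluating $b_0(D)$ in canonical coordinates, using the chain of equalities established in the proof of Theorem~\ref{thm-h0} that replaces $a_i,c$ by $\hat{a}_i,\sv$, gives $b_0(D)=1+\h-\sum_{i=1}^r\left\{\frac{\hat{a}_i}{d_i}\right\}=1+\h-\hat{a}=1+\hat{\sf}$, where the middle equality holds because $0\leq\hat{a}_i<d_i$ forces $\left\{\frac{\hat{a}_i}{d_i}\right\}=\frac{\hat{a}_i}{d_i}$. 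It then remains to see that $\hat{\sf}\leq -1$. From $\h\leq 0$ and $\hat{a}\geq 0$ one only obtains $\hat{\sf}=\h-\hat{a}\leq 0$; the borderline case $\hat{\sf}=0$ would force $\h=0$ and $\hat{a}=0$, hence every $\hat{a}_i=0$ and $D\sim 0$, contradicting the hypothesis. Thus $\hat{\sf}\leq-1$, so $b_0(D)\leq 0$ and $h^0(S,\cO_S(D))=0$.

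With $h^0=h^2=0$ I conclude $h^1(S,\cO_S(D))=-\chi(S,\cO_S(D))$, and Theorem~\ref{thm:chi-OSD} in the case $\sv=0$ gives $\chi(S,\cO_S(D))=b_0(D)=1+\hat{\sf}$. Therefore $h^1(S,\cO_S(D))=-1-\hat{\sf}$, which is exactly~\eqref{eq:axis1} once I record that $\floor{\frac{\hat{a}_i}{d_i}}=0$ for each $i$. Finally, specializing to $\ell_D=(0,0)$ means $\h=0$, so $\hat{\sf}=-\hat{a}=-\sum_{i=1}^r\frac{\hat{a}_i}{d_i}$; substituting yields $h^1(S,\cO_S(D))=-1+\sum_{i=1}^r\left\{\frac{\hat{a}_i}{d_i}\right\}$, establishing~\eqref{eq:originNot0}.
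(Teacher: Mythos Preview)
Your proof is correct and follows essentially the same approach as the paper's: vanishing of $H^2$ via Corollary~\ref{cor:h2-OSD}, vanishing of $H^0$ via Theorem~\ref{thm-h0} reduced to $\max\{b_0(D),0\}$ together with the observation that $b_0(D)=1+\hat{\sf}\leq 0$ unless $D\sim 0$, and then $h^1=-\chi=-b_0(D)$ from Theorem~\ref{thm:chi-OSD}. You spell out the borderline case $\hat{\sf}=0\Rightarrow D\sim 0$ more explicitly than the paper does, but the argument is the same.
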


\begin{proof}
By Corollary~\ref{cor:h2-OSD}, $h^2(S,\cO_S(D))=0$. 
On the other hand, $\sv=0$ together with Theorem~\ref{thm-h0} implies 
$h^0(S,\cO_S(D)) = \max \{ b_0(D), 0 \}$, where
\[
\ZZ \ni b_0(D)
= 1+\h-\sum_{i=1}^r \left\{ \frac{\hat{a}_i}{d_i} \right\}
= 1+\hat{\sf}+\sum_{i=1}^r \left\lfloor \frac{\hat{a}_i}{d_i} \right\rfloor.
\]
Since $\h\leq 0$, $D\not\sim 0$, and $b_0(D)\in\ZZ$, one deduces that $b_0(D)\leq 0$ 
and thus $h^0(S,\cO_S(D))=0$. 

Recall that $\chi(S,\cO_S(D)) = b_0(D)$ and the equality for $h^1(S,\cO_S(D))$ holds.
\end{proof}

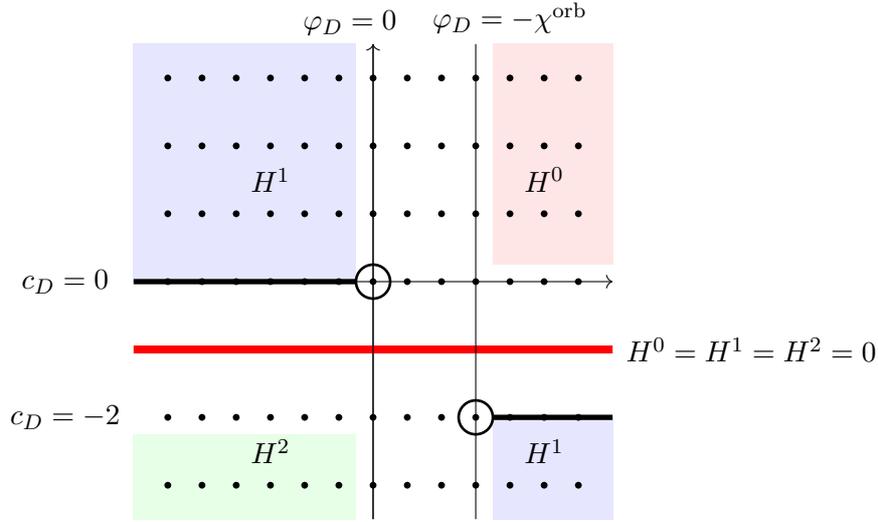
\begin{figure}[ht]
\begin{center}
﻿\begin{tikzpicture}[scale=.9]
\fill[color=red!10!white] (1.75,3.5)--(1.75,.25)--(3.5,.25)--(3.5,3.5)--cycle;
\fill[color=green!10!white] (-3.5,-2.25)--(-.25,-2.25)--(-.25,-3.5)--(-3.5,-3.5)--cycle;
\fill[color=blue!10!white] (1.75,-3.5)--(1.75,-2)--(3.5,-2)--(3.5,-3.5)--cycle;
\fill[color=blue!10!white] (-3.5,3.5)--(-3.5,0)--(-.25,0)--(-.25,3.5)--cycle;
\foreach \a in {-3,-2.5,...,3}
{
\foreach \b in {-3,-2,...,3}
{
\fill[black] (\a,\b) circle [radius=.05cm];
}
}
\draw[->] (-3.5,0)--(3.5,0) ;
\draw[->]  (0,-3.5)--(0,3.5);
\draw[line width=1] circle [radius=.25];
\draw[line width=1] (1.5,-2) circle [radius=.25];
\draw[line width=2] (1.75,-2)--(3.5,-2);
\draw[line width=2] (-3.5,0)--(-.25,0);
\draw[line width=3,color=red] (-3.5,-1)--(3.5,-1) node[right,color=black] {$H^0=H^1=H^2=0$};
\node at (2.5,1.5) {$H^0$};
\node at (-1.5,-2.5) {$H^2$};
\node at (2.5,-2.5) {$H^1$};
\node at (-1.5,1.5) {$H^1$};
\node at (-4.5,-2) {$\sv=-2$};
\node at (-4.5,0) {$\sv=0$};
\draw (1.5,-3.5) -- (1.5,3.5) node[above=10,right=-20] {$\h=-\chi^{\textrm{orb}}$};
\draw (0,-3.5) -- (0,3.5) node[above=8,right=-30] {$\h=0$};
\end{tikzpicture}
\caption{Cohomology concentrated in a single degree.}
\label{fig:H012a}
\end{center}
\end{figure}

Consider now the case $D\not\sim 0$ and $-\ell_D=(0,-\sv)\in L_{\geq 0}$.
Note that $h^0(S,\cO_S(D)) = 0$ and, by Theorem~\ref{thm:chi-OSD} and Corollary~\ref{cor:h2-OSD},
\[
h^1(S,\cO_S(D))
= \sum_{j=0}^{-(2+\sv)} \Big( -b_j(K_S-D) + \max\{ b_j(K_S-D), 0 \} \Big),
\]
where in this case,
\begin{equation}\label{eq:bound-bjKS-D}
\begin{aligned}
b_j (K_S-D) &= 1- \chi^{\textrm{orb}}- \sum_{i=1}^r \left\{ \frac{-1-a_i + (c+j+1)q_i}{d_i} \right\}\\
&\geq 1- \chi^{\textrm{orb}}- \sum_{i=1}^r \left( \frac{d_i-1}{d_i} \right)=1- \chi^{\textrm{orb}} - r + \sum_{i=1}^r \frac{1}{d_i}=-1.
\end{aligned}
\end{equation}
Hence
\begin{equation}\label{eq:h1-axis2}
h^1(S,\cO_S(D))
= \# \left\{ j \in \{0,1,\ldots,-(2+\sv)\} \mid b_j(K_S-D) = -1 \right\}.
\end{equation}

\begin{prop}\label{lemma:h1_horizontal}
Let $D$ be a divisor in $S$ such that $\ell_D=(0,\sv)$ with $\sv < 0 $.

\begin{enumerate}[label=\rm(\roman{enumi})]
\item\label{lemma:h1_horizontal_i} If $D\notin\cl_H(S):=\ZZ\langle C,E\rangle$, then $H^1(S,\cO_S(D)) = 0$.
\item\label{lemma:h1_horizontal_ii} If $D\in\cl_H(S)$, that is $D \sim c C+e E+gG$ 
for some $c,e,g\in \ZZ$ ($\sv=c+e+\vdeg g<0$), then
\[
h^1(S,\cO_S(D)) = - 1 - \left\lfloor \, \frac{c}{\vdeg} \, \right\rfloor - 
\left\lfloor \, \frac{e}{\vdeg} \, \right\rfloor - g.
\]
\end{enumerate}
\end{prop}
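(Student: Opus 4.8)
The plan is to read $h^{1}$ off the index count~\eqref{eq:h1-axis2} and then to describe exactly which indices contribute. Since $\ell_D=(\h,\sv)=(0,\sv)$ with $\sv<0$, Proposition~\ref{two-directions} already gives $h^0(S,\cO_S(D))=0$, so~\eqref{eq:h1-axis2} applies and
\[
h^1(S,\cO_S(D))=\#\bigl\{\,j\in\{0,1,\dots,-(2+\sv)\}\mid b_j(K_S-D)=-1\,\bigr\}
\]
(for $\sv=-1$ the index set is empty, matching Theorem~\ref{prop:h1}\ref{prop:h1:3}). Writing $D\sim c\,C+eE+\sum_{i=1}^r a_iA_i+\sf F$, the bound~\eqref{eq:bound-bjKS-D} shows that $b_j(K_S-D)=-1$ precisely when every fractional part $\bigl\{\tfrac{-1-a_i+(c+j+1)q_i}{d_i}\bigr\}$ attains its maximum $\tfrac{d_i-1}{d_i}$, i.e.\ when $(c+j+1)q_i\equiv a_i\pmod{d_i}$ for all $i$; since $\gcd(q_i,d_i)=1$ this reads
\[
c+j+1\equiv a_iq_i^{-1}\pmod{d_i},\qquad i=1,\dots,r .
\]
So the count is simply the number of $j$ in the prescribed range solving this simultaneous congruence.

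For part~\ref{lemma:h1_horizontal_i} I would invoke Remark~\ref{rem:cond_clh}: because $\h=0$, membership $D\in\cl_H(S)$ is equivalent to solvability of the system $x\equiv a_iq_i^{-1}\pmod{d_i}$. Hence if $D\notin\cl_H(S)$ the system is inconsistent, no integer $x=c+j+1$ satisfies it, and therefore no $j$ yields $b_j(K_S-D)=-1$. The count is then empty and $h^1(S,\cO_S(D))=0$.

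For part~\ref{lemma:h1_horizontal_ii} the system is consistent and I would count its solutions explicitly. Using $G\sim\vdeg C$ (Remark~\ref{rem:fibr2}) I rewrite $D\sim(c+g\vdeg)\,C+eE$, so that in the characterization above $a_i=0$ and the coefficient of $C$ is $c+g\vdeg$. Because $\vdeg=\lcm(d_1,\dots,d_r)$, the summand $g\vdeg$ is $\equiv0$ modulo every $d_i$, and imposing the congruence for all $i$ is the same as imposing it modulo $\vdeg$; hence $b_j(K_S-D)=-1$ exactly when $j\equiv-(c+1)\pmod{\vdeg}$. Writing the admissible indices as $j=-(c+1)+k\vdeg$ and imposing $0\le j\le-(2+\sv)=-c-e-g\vdeg-2$, the identity $\lceil\tfrac{m+1}{\vdeg}\rceil=\floor{\tfrac{m}{\vdeg}}+1$ (for $m\in\ZZ$) turns these inequalities into
\[
\floor{\tfrac{c}{\vdeg}}+1\ \le\ k\ \le\ -g-\floor{\tfrac{e}{\vdeg}}-1 .
\]
The number of such integers $k$ is $-1-\floor{\tfrac{c}{\vdeg}}-\floor{\tfrac{e}{\vdeg}}-g$, the asserted value; it is nonnegative because $\sv=c+e+g\vdeg<0$ forces $\floor{\tfrac{c}{\vdeg}}+\floor{\tfrac{e}{\vdeg}}+g\le-1$ (otherwise $\sv\ge0$), so the displayed range contains exactly that many integers.

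I expect the main obstacle to be the bookkeeping in the last paragraph: translating the two range endpoints correctly through $j=-(c+1)+k\vdeg$ and the floor/ceiling identity, together with the passage from the $c\,C+eE+gG$ presentation to the $b_j$-formula via $G\sim\vdeg C$ and $\vdeg=\lcm(d_i)$. Everything else follows directly from the congruence description of $\{j\mid b_j(K_S-D)=-1\}$ obtained in the first paragraph.
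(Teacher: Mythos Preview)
Your proposal is correct and follows essentially the same approach as the paper: both reduce to the count~\eqref{eq:h1-axis2}, characterize $b_j(K_S-D)=-1$ as the simultaneous congruence $c+j+1\equiv a_iq_i^{-1}\pmod{d_i}$, invoke the criterion of Remark~\ref{rem:cond_clh} for part~\ref{lemma:h1_horizontal_i}, and for part~\ref{lemma:h1_horizontal_ii} absorb $G\sim\vdeg C$ and count residues modulo $\vdeg$ in the range $\{0,\dots,-(2+\sv)\}$. The only cosmetic difference is that the paper reduces to $g=0$ and then counts via the smallest nonnegative solution $j_0$, whereas you keep $g$ and parametrize $j=-(c+1)+k\vdeg$ directly; the floor/ceiling bookkeeping is equivalent.
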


\begin{proof}[Proof of Proposition{\rm~\ref{lemma:h1_horizontal}}]
From Proposition~\ref{prop:h1}, if $\sv = -1$, then $h^i(S,\cO_S(D)) = 0$, $i=0,1,2$,
and the formula holds.

Assume $\sv \leq -2$, $\h = 0$, and $H^1(S,\cO_S(D)) \neq 0$.
According to~\eqref{eq:h1-axis2}, one has to study when $b_j(K_S-D)=-1$.
This happens precisely when the inequality~\eqref{eq:bound-bjKS-D} becomes
an equality
\begin{equation}
\label{eq:condition:h1}
\sum_{i=1}^r \left\{ \frac{-1-\hat{a}_i + (\sv+j+1)q_i}{d_i} \right\}
= \sum_{i=1}^r \frac{d_i-1}{d_i}
\end{equation}
which happens if and only if
\[
-1-\hat{a}_i + (\sv+j+1)q_i\equiv (d_i-1) \bmod d_i, 
\]
i.e.,
there exists a solution for the system~\eqref{eq:aimod}. Then, \ref{lemma:h1_horizontal_i} has been proven.

Let us prove~\ref{lemma:h1_horizontal_ii}. Since $G\sim \vdeg C$, it is enough to show the result for $g=0$.
Then $D\sim c C + e E$ and the condition given in~\eqref{eq:condition:h1} becomes
\[
\sum_{i=1}^r \left\{ \frac{-1 + (c+j+1)q_i}{d_i} \right\}
= \sum_{i=1}^r \frac{d_i-1}{d_i},
\]
which implies $j\equiv -(c+1) \bmod d_i$. In particular, there is a $0\leq j_0<\vdeg$, such that 
$j_0\equiv -(c+1) \bmod \vdeg$. Moreover,
\[
j_0 = -(c+1)-\vdeg\left\lfloor \frac{-c-1}{\vdeg} \right\rfloor.
\]
The system~\eqref{eq:aimod} has $\ell:=h^1(S,\cO_S(D))\geq 0$ solutions in $\{0,1,\ldots,-(\sv+2)\}$,
namely
\[
j_0+(\ell-1)\vdeg\leq -(\sv+2)< j_0+\ell \vdeg.
\]
This way $\ell-1$ can be described as an integer satisfying
\[
\frac{-(\sv+2)-j_0}{\vdeg} - 1 < \ell - 1 \leq \frac{-(\sv+2)-j_0}{\vdeg}.
\]
In other words $\ell = 1 + \left\lfloor \frac{-(\sv+2)-j_0}{\vdeg} \right\rfloor$ and then
\begin{equation}\label{eq:h1-axis2-special}
\begin{aligned}
h^1(S,\cO_S(D)) &= 1+ \left\lfloor \frac{-(\sv+2)-j_0}{\vdeg} \right\rfloor
= 1 + \left\lfloor \frac{-e-1}{\vdeg} + \left\lfloor \frac{-c-1}{\vdeg} \right\rfloor \right\rfloor \\
&= 1 + \left\lfloor \frac{-e-1}{\vdeg} \right\rfloor + \left\lfloor \frac{-c-1}{\vdeg} \right\rfloor
= - 1 - \left\lfloor \, \frac{e}{\vdeg} \, \right\rfloor - \left\lfloor \, \frac{c}{\vdeg} \, \right\rfloor.
\end{aligned}
\end{equation}
The last equality follows from~\eqref{eq:floor}.
\end{proof}

To summarize, Theorems~\ref{thm-h0} and \ref{thm:chi-OSD} together with Serre's duality show that 
the Betti numbers $h^i(S,\cO_S(D))$ are determined by the divisor class of~$D$.

In addition, there are four translated cones in the divisor lattice $L$, as shown in Figure~\ref{fig:H012a}, where the 
cohomology is concentrated in a single degree. Moreover, the dimensions $h^i(S,\cO_S(D))$ are in fact determined 
by the image $\ell_D$ in one of these translated cones.

The remaining threshold area is special in two directions. First, Betti numbers are not necessarily
concentrated in a single degree anymore, and second, they need not be determined by~$\ell_D$.

To end this section, an example of the special behavior on the threshold area is provided.

\begin{ex}
\label{ex:torsion}
Let us consider the surface $S$ associated with $(d_i)=(3,3,3,3)$ and $(q_i)=(1,2,1,2)$. In this case
$\alpha=2$, $\chi^{\textrm{orb}}=-\frac{2}{3}$, and $\vdeg=3$. According to Proposition~\ref{prop:class},
$\cl(S) \simeq \ZZ^2 \times (\ZZ/3\ZZ)^3$. The free part is generated by the classes of $C$ and $A_4$
and the torsion part by the classes of $A_1-A_4$, $A_2-A_4$, $A_3-A_4$.
Let
\[
D_1= A_1,\quad D_2=2A_1-A_2,\quad D_3=2 A_1-A_2+A_3-A_4.
\]
Note that $\ell_{D_i}=\ell_{3C+D_i}=(\frac{1}{3},0)$ is in the threshold area since 
$0<\varphi_{D_i}<-\chi^{\textrm{orb}}=\frac{2}{3}$. 
The following table can be obtained using Theorems~\ref{thm-h0} and \ref{thm:chi-OSD} together with Serre's duality.
It shows that Betti numbers of $D$ are not determined by $\ell_D$.
\begin{table}[ht]
\begin{center}
\begin{center}
\begin{tabular}{|c|c|c|c|c|c|c|}\hline
 & $D_1$ & $D_2$ & $D_3$ & $3C+D_1$ & $3C+D_2$ & $3C+D_3$ \\\hline
$h^0$ & 1 & 0 & 0 & 2 & 0 & 1\\\hline
$h^1$ & 0 & 0 & 1 & 1 & 0 & 2\\\hline
$h^2$ & 0 & 0 & 0 & 0 & 0 & 0\\\hline
\end{tabular}
\vspace{5mm}
\caption{Betti numbers for $D_i$ and $3C+D_i$.}
\end{center}
\end{center}
\end{table}
\end{ex}

\section{\texorpdfstring{$H^1$}{H1}-eigenspace decomposition and characteristic polynomial of the monodromy of a ramified cyclic cover}
\label{sec:main}

Let us consider a ramified $d$-cyclic covering of a curve or a normal surface
and its monodromy $\s$. This monodromy acts on the first cohomology group
of the cover and its characteristic polynomial 
is called the \emph{characteristic polynomial} of the monodromy. 
For historical reasons we will also use the name \emph{Alexander polynomial} of the covering
for this characteristic polynomial.

Let  $\pi:\dcover\to\Si$ be the $d$-cyclic cover
ramified along a divisor $D\sim dH$.
Before we describe its characteristic polynomial, 
we will discuss two particular cases which will be essential for the general construction.
These special covers are called \emph{vertical} and \emph{horizontal}.

\subsection{Vertical coverings}\label{sec:vertical-coverings}
\mbox{}

Consider $D$ a \emph{vertical divisor}, that is, 
\begin{equation}
\label{eq:Dvertical}
D=\sum_{i=1}^r a_i A_i+\sum_{j=1}^s f_j F_j,
\end{equation}
where $F_j$ are generic fibers. Note that $D$ is set as a Weil divisor, not only as a divisor class.
Let $H$ be a divisor, such that $D\sim dH$ for some $d\in \ZZ_{>0}$.
The purpose of this section is to describe the eigenspace decomposition of $H^1(S_d,\CC)$ 
(also called the $H^1$-eigenspace decomposition of $S_d$) for $S_d$ the cyclic cover $\pi:S_d\to S$ of 
$S$ associated with $(d,D,H)$. In particular, the characteristic polynomial of the monodromy of the 
$d$-cyclic covering of $S$ coincides with that of $\pi_E:E_d:=S_d|_{\pi^{-1}(E)}\to E=\PP^1$, which 
is a $d$-cyclic covering of a rational curve as described in Proposition~\ref{prop:restriccion} and 
its preceding paragraph.

The following result will be proven.

\begin{prop}
\label{prop:vertical-covers-case}
Consider $\pi:S_d\to S$ the cyclic cover of $S$ associated with $(d,D,H)$ as described above. 
Then, the decomposition in invariant subspaces of the monodromy of the cover can be obtained by restricting 
the covering to a horizontal section of $\pi_S:S\to\PP^1$ such as $E$ (or $C$), that is, the decomposition of the cover 
$\pi_E:E_d\to E$ ($\pi_C:C_d\to C$) associated with $(d,D_E,H_E)$.
\end{prop}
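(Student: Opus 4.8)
The plan is to run the Esnault--Viehweg machinery (Theorem~\ref{thm:ev}) on both covers simultaneously and match them eigenspace by eigenspace. First I would pass to the normalized data $(d,\tilde D,0)$ of Remark~\ref{rem:Dtilde}. Since $D$ is vertical its class lies in $\cl_F(S)$, and because $\cl(S)/\cl_F(S)\cong\ZZ\langle C\rangle$ is torsion-free (Proposition~\ref{prop:class}) while $dH\sim D$, the class of $H$ also lies in $\cl_F(S)$; hence $H$ may be represented by a vertical divisor and $\tilde D=D-dH=\sum_i m_i A_i+\sum_j n_j F_j$ is again vertical. Crucially, $\tilde D\sim 0$ is principal, hence Cartier; as only $A_i$ passes through the singular point $Q_i$ (the generic fibers $F_j$ being chosen away from $\sing S$), local Cartierness at the quotient point $\frac{1}{d_i}(1,q_i)$ forces $d_i\mid m_i$. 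Writing $m_i=d_i m_i'$ is the key integrality input below.

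By Theorem~\ref{thm:ev} the two sides split as $H^1(S_d,\cO_{S_d})=\bigoplus_{\ell=0}^{d-1}H^1(S,\cO_S(L^{(\ell)}))$ and $H^1(E_d,\cO_{E_d})=\bigoplus_{\ell=0}^{d-1}H^1(\PP^1,\cO(L_E^{(\ell)}))$, the monodromy acting on the $\ell$-th summand by $\zeta_d^{\ell}$ in both cases. It therefore suffices to prove the dimension equality of the $\ell$-th pieces for every $\ell$; the eigenvalue labelling then upgrades this to an isomorphism of monodromy representations, and compatibility with the Hodge structure follows from $H^0(\tilde X,\Omega^1)\cong\overline{H^1(\tilde X,\cO)}$. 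On the surface, Remark~\ref{rem:Dtilde} gives $L^{(\ell)}=\sum_i\lfloor\ell m_i/d\rfloor A_i+\sum_j\lfloor\ell n_j/d\rfloor F_j$, which is vertical, so $\ell_{L^{(\ell)}}=(L^{(\ell)}\cdot C,\,L^{(\ell)}\cdot F)=(\varphi_\ell,0)$ with $\varphi_\ell\le 0$ by~\eqref{eq:int_form_Ll}. Thus $L^{(\ell)}$ lands in the regime of Proposition~\ref{prop:h10<}; putting it into canonical form via Lemma~\ref{lem:unico} (so that $0\le\hat a_i<d_i$ and the floor term of~\eqref{eq:axis1} vanishes) yields $h^1(S,\cO_S(L^{(\ell)}))=-1-\sum_j\lfloor\ell n_j/d\rfloor-\sum_i\lfloor\lfloor\ell m_i/d\rfloor/d_i\rfloor$, which using $m_i=d_i m_i'$ and $\lfloor\lfloor x\rfloor/n\rfloor=\lfloor x/n\rfloor$ collapses to $-1-\sum_j\lfloor\ell n_j/d\rfloor-\sum_i\lfloor\ell m_i'/d\rfloor$.

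For the curve side I would identify $\pi_E$ through Proposition~\ref{prop:restriccion}: the defining divisor $D_E$ is supported on $\tilde D\cap E=\{Q_i\}\cup\{F_j\cap E\}$, each generic fiber $F_j$ contributing a point of multiplicity $n_j$, and each $A_i$ contributing $Q_i$ with a multiplicity read off from the exceptional component of the separating weighted blow-up. This is where the real work lies: by the local computation of the Example following Theorem~\ref{thm:ev} together with Lemma~\ref{lema:quot_local}, the special blow-up whose curvette is $E$ assigns to its exceptional divisor the coefficient $\tfrac{1}{d_i}m_i=m_i'$, i.e.\ exactly the local intersection number $E\cdot(m_iA_i)=m_i/d_i$, an integer precisely because $d_i\mid m_i$ (recall~\eqref{eq:intersections}). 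Thus $D_E=\sum_i m_i'\langle Q_i\rangle+\sum_j n_j\langle F_j\cap E\rangle$, of degree $\tilde D\cdot E=0$, and formula~\eqref{eq:h1-deg} gives $h^1(\PP^1,\cO(L_E^{(\ell)}))=-1-\sum_i\lfloor\ell m_i'/d\rfloor-\sum_j\lfloor\ell n_j/d\rfloor$. Comparing with the previous paragraph, the two expressions coincide term by term for every $\ell$.

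I expect the genuine obstacle to be exactly this local multiplicity computation at the quotient points $Q_i$: one must verify through Proposition~\ref{prop:restriccion} and Lemma~\ref{lema:quot_local} that the separating special blow-up (the one whose curvette is $E$) records the coefficient $m_i/d_i$ and not, say, $q_im_i/d_i$; equivalently, that the restricted divisor has multiplicity at $Q_i$ equal to the local intersection number $E\cdot A_i$ times $m_i$. Once this is pinned down, the degenerate indices (those $\ell$ with $\varphi_\ell=0$, where one instead invokes the refined formula~\eqref{eq:originNot0}) are checked directly, and the eigenspace-by-eigenspace identification is complete; the same argument with $C$ in place of $E$ gives the parenthetical statement.
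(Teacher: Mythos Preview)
Your proposal is correct and follows essentially the same route as the paper: reduce to $(d,\tilde D,0)$ with $\tilde D$ vertical, extract the divisibility $d_i\mid m_i$ (you via local Cartierness, the paper via the presentation of $\cl_F(S)$), compute $h^1(S,\cO_S(L^{(\ell)}))$ through Proposition~\ref{prop:h10<}, and match it against the curve side obtained from Proposition~\ref{prop:restriccion}. The one step you flag as the ``genuine obstacle'' is exactly what the paper carries out explicitly: a $(1,q_i)$-weighted blow-up at $Q_i$ gives the exceptional component multiplicity $a'_i/d_i$, confirming your expected value $m_i/d_i$ rather than $q_im_i/d_i$.
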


\begin{proof}
Let us break the proof in several steps.

\begin{step}
\label{step1-vertical} Reduction to $(d,D,0)$.
\end{step}

By the discussion in section~\ref{sec:Esnault} one can replace $(d,D,H)$ by $(d,D-dH,0)$. Also, using the canonical form for $H$ given
in~\eqref{eq:canonical}, and since $dH\cdot F=F\cdot D=0$, one has $H\sim \sum \hat a_i A_i + \hat f F$. In particular, 
$D'=D-dH$ is a vertical divisor and hence it is enough to study the case $(d,D',0)$, where
\begin{equation}
\label{eq:Dprime-vertical}
D'=\sum_{i=1}^r a'_i A_i+\sum_{j=1}^s f'_j F_j.
\end{equation}

Note that the condition $D'\sim 0$ is equivalent to
\begin{subequations}
\label{eq:cond-vertical1}
\begin{align}
&\h=\sum_{i=1}^r\frac{a'_i}{d_i}+\sum_{j=1}^s f'_j=0 \qquad\text{ and } \label{eq:cond-vertical1a}\\
&a'_i\equiv 0 \ \bmod{d_i}, \qquad\qquad i=1,\dots,r.
\label{eq:cond-vertical1b}
\end{align}
\end{subequations}

\begin{step}
 Calculation of $h^1(S,\cO_S(L^{(\l)}))$.
\end{step}

To apply Esnault and Viehweg's method we consider the following divisors for $\l\in\ZZ$:
\[
L^{(\l)}=\sum_{i=1}^r \left\lfloor\frac{\l a'_i}{d}\right\rfloor A_i+
\sum_{j=1}^s \left\lfloor\frac{\l f'_j}{d}\right\rfloor F_j.
\]
Note that $L^{(\l)}\sim 0$ if and only if 
\begin{subequations}
\label{eq:cond-vertical2}
\begin{align}
&\varphi_{L^{(\l)}}=\sum_{i=1}^r \frac{1}{d_i}\left\lfloor\frac{\l a'_i}{d}\right\rfloor+
\sum_{j=1}^s \left\lfloor\frac{\l f'_j}{d}\right\rfloor=0 \qquad
\text{ and } \label{eq:cond-vertical2a} \\
&\left\lfloor\frac{\l a'_i}{d}\right\rfloor\equiv 0 \ \bmod{d_i}, \qquad\qquad\qquad i=1,\dots,r.
\label{eq:cond-vertical2b}
\end{align}
\end{subequations}
Subtracting~\eqref{eq:cond-vertical2a} from~$\frac{\l}{d}$\eqref{eq:cond-vertical1a} 
and using both $\left\{ x\right\}=x-\lfloor x \rfloor$ and 
$\left\{ x\right\}\geq 0$ one deduces
\begin{equation}
\array{lclcl}
\label{eq:cond-vertical3}
\displaystyle\sum_{i=1}^r \frac{1}{d_i}\left\{\frac{\l a'_i}{d}\right\}+
\sum_{j=1}^s \left\{\frac{\l f'_j}{d}\right\}=0 & \Leftrightarrow & 
\displaystyle\left\{\frac{\l a'_i}{d}\right\}=\left\{\frac{\l f'_j}{d}\right\}=0 & 
\Leftrightarrow & \displaystyle\frac{\l a'_i}{d},\frac{\l f'_j}{d}\in\ZZ
\\
\endarray
\end{equation}
for all $i=1,...,r$ and $j=1,...,s$. This is equivalent to $\frac{\l}{d_n}\in\ZZ$, where 
$d_n=\frac{d}{n}$, for $n:=\gcd(a'_1,...,a'_r,f'_1,...,f'_s,d)$.
Summarizing, if $\l$ is a multiple of $d_n$, then $h^1(S,\cO_S(L^{(\l)}))=0$. 
Otherwise, $\l\not\equiv 0\bmod{d_n}$, using~\eqref{eq:axis1}
\begin{equation}
\label{eq:h1vertical} 
h^1(S,\cO_S(L^{(\l)}))=-\sum_{i=1}^r \left\lfloor\frac{\left\lfloor\frac{\l a'_i}{d}\right\rfloor}{d_i}\right\rfloor-
\sum_{j=1}^s \left\lfloor\frac{\l f'_j}{d}\right\rfloor -1=
-\sum_{i=1}^r \left\lfloor\frac{\l a'_i}{dd_i}\right\rfloor-
\sum_{j=1}^s \left\lfloor\frac{\l f'_j}{d}\right\rfloor -1.
\end{equation}
where the last equality follows from~\eqref{eq:floorfloor}.

\begin{step}
Restriction to a 1-dimensional cover of $\PP^1$.
\end{step}

Let us restrict this cover to $E$ (for $C$ it works the same way). 
We will be using Proposition~\ref{prop:restriccion} and the construction before it.
Since $E$ contains $r$ singular points, say $\{\gamma_1,...,\gamma_r\}$ of the surface $S$, 
one needs to perform a $(1,q_i)$-weighted-blowup at each point $\gamma_i$ with exceptional component $E_i$. 
The total transform of $D$ is:
\[
\sum_{i=1}^r a'_i A_i+\sum_{i=1}^r \frac{a'_i}{d_i} E_i+\sum_{j=1}^s f'_j F_j;
\]
and hence $E_C=\sum_{i=1}^r\frac{a'_i}{d_i}\langle \gamma_i\rangle+ \sum_{j=1}^sf_j\langle P_j\rangle$,
where $P_j=E\cap F_j$ and $\frac{a'_i}{d_i}\in\ZZ$ by~\eqref{eq:cond-vertical1b}.

The decomposition in invariant subspaces of the monodromy of this cover is calculated in \S\ref{subsec:coverP1} and 
it matches that obtained in~\eqref{eq:h1vertical}.
\end{proof}

As a consequence of the proof and \eqref{eq:acampo}, one has the following specific formulas for the dimensions of the invariant subspaces and 
the Alexander polynomials of the monodromy.

\begin{cor}
\label{cor:Alexander-vertical}
Under the hypothesis of Proposition{\rm~\ref{prop:vertical-covers-case}}, after appropriate reduction to the case $(d,D',0)$ as 
in~\eqref{eq:Dvertical}, the dimension of the invariant subspaces is given by
\begin{equation}
\label{eq:cor-h1vertical} 
h^1(S,\cO_S(L^{(\l)}))=
-\sum_{i=1}^r \left\lfloor\frac{\l a'_i}{dd_i}\right\rfloor-
\sum_{j=1}^s \left\lfloor\frac{\l f'_j}{d}\right\rfloor -1.
\end{equation}
Moreover, the characteristic polynomial of the monodromy is
\begin{equation}
\label{eq:cor-Alexvertical} 
\Delta_1(t)=\frac{(t^n-1)^2(t^d-1)^{r+s-2}}{\prod_{i=1}^r (t^{\gcd(d,\frac{a'_i}{d_i})}-1)\prod_{j=1}^s (t^{\gcd(d,f'_j)}-1)}.
\end{equation}
\end{cor}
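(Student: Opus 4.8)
The plan is to extract both assertions directly from the proof of Proposition~\ref{prop:vertical-covers-case}: the dimension formula is essentially already computed there, while the Alexander polynomial follows by feeding the restricted cover into the one-dimensional formula~\eqref{eq:acampo}. So the proposal splits naturally into the two halves of the statement.

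For the dimension formula, after the reduction of Step~\ref{step1-vertical} we may assume the datum is $(d,D',0)$ with $D'=\sum_i a'_i A_i+\sum_j f'_j F_j\sim 0$ a vertical divisor, so that by Theorem~\ref{thm:ev} the $\zeta_d^{\l}$-eigenspace of the monodromy on $H^1(S_d,\cO_{S_d})$ equals $H^1(S,\cO_S(L^{(\l)}))$ for
\[
L^{(\l)}=\sum_{i=1}^r\floor{\frac{\l a'_i}{d}}A_i+\sum_{j=1}^s\floor{\frac{\l f'_j}{d}}F_j.
\]
This divisor is vertical, so $F\cdot L^{(\l)}=0$, and since $\floor{x}\le x$ together with $\varphi_{D'}=0$ forces $C\cdot L^{(\l)}\le 0$, the image $\ell_{L^{(\l)}}$ lies on the non-positive horizontal axis, i.e.\ $-\ell_{L^{(\l)}}\in L_{\ge 0}$. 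I would then apply Proposition~\ref{prop:h10<} (which gives $h^0=h^2=0$ and the value~\eqref{eq:axis1}) and collapse the nested floor with~\eqref{eq:floorfloor} to arrive at exactly~\eqref{eq:cor-h1vertical}. The one point to phrase with care is that this holds for those $\l$ with $L^{(\l)}\not\sim 0$; when $L^{(\l)}\sim 0$ one has $h^1=0$ while the right-hand side of~\eqref{eq:cor-h1vertical} comes out negative, so such $\l$ simply do not contribute to the monodromy and can be discarded.

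For the Alexander polynomial I would use Proposition~\ref{prop:vertical-covers-case} itself to identify $\Delta_1(t)$ with the characteristic polynomial of the restricted cover $\pi_E\colon E_d\to E\cong\PP^1$, whose eigenspace dimensions agree with those just computed. By Proposition~\ref{prop:restriccion} and the paragraph preceding it, $\pi_E$ is the $d$-cyclic cover of $\PP^1$ attached to
\[
E_C=\sum_{i=1}^r\frac{a'_i}{d_i}\langle\gamma_i\rangle+\sum_{j=1}^s f'_j\langle P_j\rangle,
\]
a divisor supported on $r+s$ points. Substituting these $r+s$ multiplicities into~\eqref{eq:acampo} then produces precisely~\eqref{eq:cor-Alexvertical}.

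The step I expect to be the true obstacle is the bookkeeping of the multiplicities of $E_C$. The coefficient at the singular point $\gamma_i$ is the \emph{reduced} integer $\frac{a'_i}{d_i}$ — the multiplicity of the exceptional curve $E_i$ of the special $(1,q_i)$-blow-up in the total transform of $D'$ — and not $a'_i$. Consequently the denominator exponents are $\gcd(d,\frac{a'_i}{d_i})$ and, most importantly, the integer $n$ in the numerator of~\eqref{eq:cor-Alexvertical} must be read as $n=\gcd(d,\frac{a'_1}{d_1},\dots,\frac{a'_r}{d_r},f'_1,\dots,f'_s)$, the gcd of $d$ with the actual multiplicities of $E_C$. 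As a consistency check one can match the degree of $\Delta_1(t)$ against $2\sum_{\l}h^1(S,\cO_S(L^{(\l)}))$ using the Hodge symmetry $H^0(\Omega^1_{S_d})\cong\overline{H^1(\cO_{S_d})}$; this simultaneously confirms the agreement of the two formulas and pins down the correct value of $n$.
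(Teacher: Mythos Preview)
Your argument coincides with the paper's: both parts are read off directly from the proof of Proposition~\ref{prop:vertical-covers-case} together with~\eqref{eq:acampo}. Your care about the value of $n$ is warranted and sharper than the paper's brief justification --- applying~\eqref{eq:acampo} to the divisor $E_C$ indeed yields $n=\gcd\bigl(d,\tfrac{a'_1}{d_1},\dots,\tfrac{a'_r}{d_r},f'_1,\dots,f'_s\bigr)$, the number of connected components of $\pi_E$ (and of $S_d$, by Proposition~\ref{prop:numb_conn_comp}), which in general divides but need not equal the quantity $\gcd(d,a'_1,\dots,a'_r,f'_1,\dots,f'_s)$ introduced in the proof of Proposition~\ref{prop:vertical-covers-case}.
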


\subsection{Horizontal coverings}\label{sec:horizontal-coverings}
\mbox{}

The second type of special cyclic covers of $S$ are those associated with $(d,D,H)$ where $D$ is a \emph{horizontal divisor}, that is,
\begin{equation}\label{eq:horizontal}
D=cC+eE+\sum_{j=1}^s g_j G_j\sim dH,
\end{equation}
where $G_1,\dots,G_s$ are distinct fibers of $\pi_G$ all of them linearly equivalent to~$G$, see Remark~\ref{rem:fibr2}. 

As was discussed in section~\ref{sec:vertical-coverings}, it is enough to consider the case $(d,D-dH,0)$. However,
$D-dH$ is not necessarily a horizontal divisor, as described in~\eqref{eq:horizontal-seq}. This subtlety makes this case a bit more
involved than the vertical case.

The best reduction one can expect is given by the following result.

\begin{lemma}
\label{lem:Hhorizontal}
There exist $\gamma,\eta\in\ZZ$ and a divisor
$T'\in\tor\cl(S)$ such that 
\[
H\sim \gamma C+ \eta E+T';
\]
the order of $T'$ is a divisor of $\gcd(d,\vdeg)$
and the only common multiple of $T'$ and $T$ is $0$
(see Remark{\rm~\ref{rem:divT}} for the definition of~$T$).

Moreover, there exist integers $c',e'$ such that 
\[
e=d\eta+\vdeg e',\quad 
c=d\gamma+\vdeg c',\quad
c'+e'+\sum_{j=1}^s g_j=0.
\]
\end{lemma}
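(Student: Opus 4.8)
The plan is to refine the direct–sum description of $\cl(S)$ from Proposition~\ref{prop:class} into coordinates adapted to $C$, $E$ and the torsion element $T=E-C$, and then compare the two sides of $dH\sim D$ one coordinate at a time. The starting point is a splitting of the torsion. By~\ref{prop:class:3} the element $T$ has order $\vdeg=m_{r-1}$, which is the exponent of $\tor\cl(S)=\bigoplus_{i=1}^{r-1}\ZZ/m_i$; since a cyclic subgroup whose order equals the exponent of a finite abelian group is a direct summand, I may write
\[
\tor\cl(S)=\langle T\rangle\oplus G',\qquad G'\cong\bigoplus_{i=1}^{r-2}\ZZ/m_i.
\]
Combining this with $\cl(S)=\cl_C(S)\oplus\cl_F(S)$ and $\cl_F(S)=\ZZ\langle B\rangle\oplus\tor\cl(S)$ from the proof of Proposition~\ref{prop:class} (where $B$ is a generator of the free part of $\cl_F(S)$, a combination of the $A_i$ as in~\ref{prop:class:1}, and $\tor\cl(S)\subset\cl_F(S)$), I obtain
\[
\cl(S)=\ZZ\langle C\rangle\oplus\ZZ\langle B\rangle\oplus\langle T\rangle\oplus G',
\]
so every class has well defined $C$–, $B$–, $T$– and $G'$–components. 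Accordingly I write $H\sim\gamma' C+\beta B+\eta T+T'$ with $\gamma',\beta,\eta\in\ZZ$ and $T'\in G'$.

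Next I would rewrite $D$ from~\eqref{eq:horizontal} in the same coordinates. Using $E\sim C+T$ and $G_j\sim G\sim\vdeg C$ gives
\[
D=cC+eE+\sum_{j=1}^s g_j G_j\sim\bigl(c+e+\vdeg\,\textstyle\sum_j g_j\bigr)C+eT,
\]
so $D$ has vanishing $B$– and $G'$–components and $T$–component $e$. Imposing $dH\sim D$ and comparing the four components yields $d\beta=0$, $d\gamma'=c+e+\vdeg\sum_j g_j$, $d\eta\equiv e\pmod{\vdeg}$, and $dT'=0$ in $G'$. The first equation forces $\beta=0$ (the $B$–part is torsion free), whence $H\sim\gamma'C+\eta T+T'=(\gamma'-\eta)C+\eta E+T'$; setting $\gamma:=\gamma'-\eta$ gives the asserted form $H\sim\gamma C+\eta E+T'$. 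Since $\langle T'\rangle\subseteq G'$ and $\langle T\rangle\cap G'=0$, the subgroups $\langle T\rangle$ and $\langle T'\rangle$ meet only in $0$, which is the claimed independence. Finally $dT'=0$ gives $\ord(T')\mid d$, and $\ord(T')\mid\vdeg$ because $\vdeg$ is the exponent of $\tor\cl(S)$; hence $\ord(T')\mid\gcd(d,\vdeg)$.

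The moreover part is then bookkeeping with these same relations. The congruence $d\eta\equiv e\pmod{\vdeg}$ gives $e=d\eta+\vdeg e'$ with $e':=(e-d\eta)/\vdeg\in\ZZ$. Substituting this together with $\gamma'=\gamma+\eta$ into $d\gamma'=c+e+\vdeg\sum_j g_j$ produces $c=d\gamma-\vdeg\bigl(e'+\textstyle\sum_j g_j\bigr)$, so with $c':=-(e'+\sum_j g_j)\in\ZZ$ one gets $c=d\gamma+\vdeg c'$ and $c'+e'+\sum_j g_j=0$, exactly as required.

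I expect the only genuinely delicate point to be the first paragraph: extracting $\langle T\rangle$ as a direct summand of the torsion (this is precisely where~\ref{prop:class:3}, that $T$ realizes the exponent $\vdeg$, is indispensable) and checking that the refined splitting of $\cl(S)$ is compatible with the one in Proposition~\ref{prop:class}, so that the component–by–component comparison of $dH\sim D$ is legitimate. Everything downstream of that is linear algebra over $\ZZ$.
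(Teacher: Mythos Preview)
Your proof is correct and, if anything, more streamlined than the paper's. Both arguments ultimately rest on the same structural fact from Proposition~\ref{prop:class}\ref{prop:class:3}: since $T$ has order $\vdeg$, which is the exponent of $\tor\cl(S)$, the cyclic subgroup $\langle T\rangle$ splits off as a direct summand. The difference is where this is invoked. You front-load it to obtain the refined decomposition $\cl(S)=\ZZ\langle C\rangle\oplus\ZZ\langle B\rangle\oplus\langle T\rangle\oplus G'$, after which the comparison $dH\sim D$ becomes a clean coordinate-by-coordinate check, and the properties of $T'$ (order dividing $\gcd(d,\vdeg)$, trivial intersection with $\langle T\rangle$) fall out immediately from $T'\in G'$ and $dT'=0$. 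The paper instead writes $H$ in the concrete generators $C,E,A_i,F$, extracts from $D\sim dH$ the congruence $e\equiv d\eta'\pmod{d_i}$ for each $i$, deduces $\gcd(d,\vdeg)\mid e$, solves for $\eta$, builds a one-parameter family $H_h=\gamma_h C+\eta_h E$, and only at the end uses the $\langle T\rangle$-splitting to select $h$ so that $T_h:=H_h-H$ has vanishing $T$-component. Your route bypasses the family $H_h$ and the explicit congruence-solving; the paper's route is closer to the presentation~\eqref{eq:clS} and keeps the auxiliary divisor $B$ out of sight. The one point you should state explicitly (you do allude to it) is that the fact ``cyclic of exponent order $\Rightarrow$ direct summand'' is being used, since this is exactly what makes your decomposition---and the paper's final adjustment step---possible.
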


\begin{proof}
Since $\gcd(q_i,d_i)=1$, we can assume that 
\[
H\sim \gamma' C+ \eta' E+ \sum_{i=1}^r q_i\alpha'_i A_i+\phi' F;
\]
recall that $C,E,F, A_i$ form a generator system of $\cl(S)$. Moreover, since
$d_i A_i\sim F$ and $q_i,d_i$ are coprime, such an expression exists.
The condition $D\sim d H$ is equivalent to
\begin{equation*}%
\sum_{i=1}^r\frac{q_i\alpha'_i}{d_i}+\phi'=0,\quad
c+e+\vdeg\sum_{j=1}^s g_j=d(\gamma'+\eta'),\quad 
e\equiv d(\eta'+\alpha'_i)\bmod{d_i}.
\end{equation*}
In particular, $\gcd(d,d_i)$
divides~$e$, i.e.,
\[
\lcm(\gcd(d,d_1),\dots,\gcd(d,d_r))=\gcd(d,\lcm(d_1,\dots,d_r))=
\gcd(d,\vdeg)
\]
also divides~$e$. Note that $\gcd(d,\vdeg)$ divides~$c$.

Let $\eta_0$ be a solution of $e\equiv d\eta'\bmod{\vdeg}$; the solutions
of this equation are $\eta_h:=\eta_0+h \vdeg_1$ 
where $\vdeg_1:=\frac{\vdeg}{\gcd(d,\vdeg)}$ and $h\in\ZZ$. 
Let
\[
\gamma_h:=\frac{c+e+\vdeg\sum_{j=1}^s g_j}{d}-\eta_h\in\ZZ.
\]
Let $H_h:=\gamma_h C+\eta_h E=H_0+h \vdeg_1 T$.
Note that
\[
\begin{aligned}
D-dH_h &\sim 
(c-d\gamma_h)C+(e-d\eta_h)E+\sum_{j=1}^s g_j G_j \\
&\sim \left(c-d\gamma_h+\vdeg\sum_{j=1}^s g_j\right)C+(e-d\eta_h)E
\sim (e-d\eta_h) T\sim 0.
\end{aligned}
\]
Then $T_h:=H_h-H=T_0+h\vdeg_1 T$ defines a torsion class such that $d T_h\sim 0$.
Since the maximal order of torsion classes is~$\vdeg$ we deduce
that  $\gcd(\vdeg,d)T_h=0$.

Using the structure of $\cl(S)$ given in Proposition~\ref{prop:class},
let us fix a direct-sum decomposition where
the component of $\ZZ/m_{r-1}=\ZZ/\vdeg$ is generated by $T$. The
coordinate $\beta_h\bmod{\vdeg}$ of $T_h$ in this component must 
satisfy 
\[
\gcd(\vdeg,d)\beta_h\equiv0\bmod{\vdeg}\Longleftrightarrow
\beta_h\equiv0\bmod{\vdeg_1}\Longleftrightarrow
\beta_h=\hat{\beta}_h\vdeg_1,
\]
and note that $\hat{\beta}_h\equiv\hat{\beta}_0+h\bmod{\vdeg_1}$.
Hence for a suitable~$h$ the coordinate $\hat{\beta}_h$ of $H_h$ in $T$
vanishes  in $\ZZ/\vdeg$. 

Let us denote $\gamma=\gamma_h$, $\eta=\eta_h$, and $T'=T_h$.
They are the values in the statement (in fact, those values may not be unique but we do not claim that).

The choice of $H_0$ is \emph{well defined} up to congruence 
with $T_1=\vdeg_1 T$, with order exactly $\gcd(\vdeg,d)$. 
The definition of $\gamma,\eta$ as solutions of a congruence
equation ends the proof.
\end{proof}

\begin{red}
\label{red:horizontal}
As an immediate consequence of Lemma~\ref{lem:Hhorizontal}, considering 
$D'=D-d(\gamma C+ \eta E)$, one can reduce the general case of horizontal coverings to those associated with $(d,D',T')$, where
\begin{equation}\label{eq:reduction}
D'=\vdeg c'C+\vdeg e'E+\sum_{j=1}^s g_j G_j\sim 0,\quad 
T'\sim\sum_{i=1}^r \alpha_iA_i.
\end{equation}
The class $T'$ is torsion of order~$d_\t$ and its only common multiple with $T$ is~$0$.
For convenience, we denote $\tau:=\frac{d}{d_\tau}\in\ZZ$.
\end{red}

These integers $c',e'$ are particularly important, since they provide an interesting feature of the restriction of the covering to
the preimage $\fd$ of a generic fiber~$F$. Namely, this cover~$\pi_F$ ramifies at $\vdeg s+2$ points with ramification indices
$\vdeg c$ (at $P_C=C|_F$), $\vdeg e$ (at $P_E=C|_F$), and $g_j$, $j=1,...,s$ (at each of the $\vdeg$ points $G_j|_F=P_{j1}+...+P_{j\vdeg}$).
\begin{equation}\label{pull-back-F-1}
\begin{tikzcd}
\fd\ar[d,"\pi_F"]\ar[r]&X\ar[d,"\pi_X"]\\
F\ar[r]&\PP^1\\[-25pt]
z\ar[r,mapsto]&z^\vdeg
\end{tikzcd}
\end{equation}
Actually this cover is the pull-back of a cover $\pi_X$ with ramification
indices $c$ (at~$z=0$), $e$ (at $z=\infty$), and ramifications $g_j$ at $s$ points of $\CC^*$. 

\begin{defi}
The $d$-covering $\pi_X:X\to\PP^1$ of \eqref{pull-back-F-1} is called the \emph{primitive vertical cover} of~$\pi$. 
\end{defi}

The \emph{vertical} coverings are the restrictions of the covering~$\pi$ to the preimage of the fibers. 
The above covering is called \emph{primitive} because $\pi_F$
can be retrieved from $\pi_X$ due to \eqref{eq:reduction} and how each $G_j$
intersects the fibers at $\vdeg$ points.

This behavior is repeated for each special fiber $A_i$, replacing $\vdeg$ by $\frac{\vdeg}{d_i}$ 
and taking into account that the factorization may not work for the restrictions 
$\pi_{A_i}:\ai{d}\to A_i$ but only for intermediate covers  $\pi_{A_i,d'}:\ai{d'}\to A_i$,
where $d'$ is a divisor of~$d$. For each~$i$, we set $e_i$ as the maximal divisor of~$d$
such that the following diagram holds:
\begin{equation}\label{pull-back-Ai-1}
\begin{tikzcd}
\ai{e_i}\ar[d,"\pi_{A_i,e_i}"]\ar[r]&X_{e_i}\ar[d,"\pi_{X,e_i}"]\\
A_i\ar[r]&\PP^1\\[-25pt]
z\ar[r,mapsto]&z^{\frac{\vdeg}{d_i}}.
\end{tikzcd}
\end{equation}

\begin{defi}\label{def:gcd-cover}
Let $\hat{e}:=\gcd(e_1,\dots,e_r)$. 
The $\hat{e}$-covering $\pi_{X,\hat{e}}:X_{\hat{e}}\to\PP^1$ is called the \emph{greatest common vertical cover} of~$\pi$. 
\end{defi}

In order to apply Esnault and Viehweg's method one has to consider the following divisors for $\l=0,...,d-1$.
The divisor for Esnault and Viehweg's method is
\begin{align*}
L^{(\l)}=&
\left\lfloor\frac{\l \vdeg c'}{d}\right\rfloor C+\left\lfloor\frac{\l \vdeg e'}{d}\right\rfloor E+
\sum_{j=1}^s \left\lfloor\frac{\l g_j}{d}\right\rfloor G_j -\l T'\\
\sim &
\left(\left\lfloor\frac{\l \vdeg c'}{d}\right\rfloor+
\left\lfloor\frac{\l \vdeg e'}{d}\right\rfloor+
\vdeg\sum_{j=1}^s \left\lfloor\frac{\l g_j}{d}\right\rfloor\right) C+
\left\lfloor\frac{\l \vdeg e'}{d}\right\rfloor T-\l T'.
\end{align*}
From Lemma~\ref{lem:Hhorizontal}, one has
\[
\l \vdeg c' + \l \vdeg e' 
+\vdeg\sum_{j=1}^s \l g_j=
\l\vdeg\left(c' +  e' 
+\sum_{j=1}^s g_j\right)
= 0
\]
and hence 
\[
\left\{\frac{\l \vdeg c}{d}\right\} +
\left\{\frac{\l \vdeg e}{d}\right\} 
+\vdeg\sum_{j=1}^s \left\{\frac{\l g_j}{d}\right\}\in\ZZ.
\]
Let 
\begin{equation*}
\tilde{L}^{(\l)}:=
-\left(\left\{\frac{\l \vdeg c'}{d}\right\} +
\left\{\frac{\l \vdeg e'}{d}\right\} 
+\vdeg\sum_{j=1}^s \left\{\frac{\l g_j}{d}\right\}\right) C
+ \left\lfloor \frac{\l \vdeg e'}{d}\right\rfloor T - \l T'.
\end{equation*}

\begin{equation}
\label{eq:L-horizontal}
\begin{aligned}
L^{(\l)}-\tilde{L}^{(\l)}\sim&
\left(\left\lfloor\frac{\l \vdeg c'}{d}\right\rfloor+
\left\{\frac{\l \vdeg c'}{d}\right\} +
\left\lfloor\frac{\l \vdeg e'}{d}\right\rfloor+
\left\{\frac{\l \vdeg e'}{d}\right\} +
\vdeg\sum_{j=1}^s \left(
\left\lfloor\frac{\l g_j}{d}\right\rfloor
+\left\{\frac{\l g_j}{d}\right\}
\right)\right) C\\
\sim&\
\frac{\l\vdeg}{d}\left( c'+ e' +
\sum_{j=1}^s g_j
\right) C=0.
\end{aligned}
\end{equation}

\begin{remark}
The common divisor 
\[
n:=\gcd(d,\vdeg c,\vdeg e,g_1,\dots,g_s)
\]
of the coefficients of $D'$ and the degree of the cover will be useful 
as well as $d_n:=\frac{d}{n}$. 
Note that $n$ is also the greatest common divisor of 
$d$ and the coefficients of $D$ before the reduction.
\end{remark}

As a first approach, we will consider the simpler case $(d,D',0)$ for a horizontal divisor $D'\sim 0$, that is, $T'\sim 0$. 
In this case, one obtains the following result.

\begin{prop}
\label{prop:purehorizontal}
Let $\pi:S_d\to S$ be a horizontal cyclic cover of $S$ associated with $(d,D',0)$. 
Then its $H^1$-eigenspace decomposition can be described as a direct sum $\Hh\oplus \Hm$, 
where $\Hm$ comes from a vertical cover of type $(n,-\vdeg eT,0)$
and $\Hh$ comes from the \emph{greatest common vertical cover} which is of degree~$d$.
Moreover, for any $\l=1,\dots,d-1$,
\begin{equation}\label{eq:h1_pure-hor}
h^1(S,\cO_S(L^{(\l)}))=
\begin{cases}
\displaystyle -1+\left\{\frac{\l c}{d}\right\}+
\left\{\frac{\l e}{d}\right\}+
\sum_{j=1}^s \left\{\frac{\l g_j}{d}\right\}
& \textrm{if } d_n\nmid\l, \\
\displaystyle -1+\sum_{i=1}^r\left\{-\frac{\l_2 \vdeg eq_i}{nd_i}\right\} & \textrm{if } \l=\l_2d_n.
\end{cases}
\end{equation}
\end{prop}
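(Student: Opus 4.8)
The plan is to run Esnault and Viehweg's method (Theorem~\ref{thm:ev}) and then read off each eigenspace $H^1(S,\cO_S(L^{(\l)}))$ from the cohomology dictionary of Section~\ref{sec:cohomology}. Since $T'\sim 0$, Remark~\ref{rem:Dtilde} and the reduction~\eqref{eq:reduction} let me take the data to be $(d,D',0)$ with $D'=\vdeg c\,C+\vdeg e\,E+\sum_j g_jG_j$ and $c+e+\sum_j g_j=0$, so that
\[
L^{(\l)}=\floor{\frac{\l\vdeg c}{d}}C+\floor{\frac{\l\vdeg e}{d}}E+\sum_{j=1}^s\floor{\frac{\l g_j}{d}}G_j ,
\]
with the monodromy acting by $\zeta_d^{\l}$. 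Because $C$, $E$ and $G_j\sim\vdeg C$ are all horizontal, $L^{(\l)}\in\cl_H(S)$ and $L^{(\l)}\cdot C=0$, so its image in the divisor lattice is $\ell_{L^{(\l)}}=(0,L^{(\l)}\cdot F)$ with $L^{(\l)}\cdot F=\floor{\frac{\l\vdeg c}{d}}+\floor{\frac{\l\vdeg e}{d}}+\vdeg\sum_j\floor{\frac{\l g_j}{d}}\le 0$. Thus every eigenspace sits on the nonpositive vertical axis of Figure~\ref{fig:H012a}, precisely the regime governed by Propositions~\ref{prop:h10<} and~\ref{lemma:h1_horizontal}.

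The decisive dichotomy is whether $L^{(\l)}\cdot F$ is strictly negative or zero. A short computation shows $L^{(\l)}\cdot F=0$ if and only if $d\mid\l\vdeg c$, $d\mid\l\vdeg e$ and $d\mid\l g_j$ for all $j$, which, by $n=\gcd(d,\vdeg c,\vdeg e,g_1,\dots,g_s)$, is equivalent to $d_n\mid\l$. For the generic values $d_n\nmid\l$ one has $L^{(\l)}\cdot F<0$, and since $L^{(\l)}\in\cl_H(S)$ I would apply Proposition~\ref{lemma:h1_horizontal}\ref{lemma:h1_horizontal_ii} together with the nested-floor identity~\eqref{eq:floorfloor} to rewrite $-1-\floor{\frac{\l c}{d}}-\floor{\frac{\l e}{d}}-\sum_j\floor{\frac{\l g_j}{d}}$; using $c+e+\sum_j g_j=0$ to cancel the linear terms yields the first branch $-1+\{\frac{\l c}{d}\}+\{\frac{\l e}{d}\}+\sum_j\{\frac{\l g_j}{d}\}$. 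By~\eqref{eq:h1-deg} and Section~\ref{subsec:coverP1} this is exactly the $\l$-eigenspace dimension of the $d$-cyclic cover of $\PP^1$ with divisor $c\langle 0\rangle+e\langle\infty\rangle+\sum_j g_j\langle p_j\rangle$, i.e.\ of the primitive vertical cover $\pi_X$, which in the present case equals the greatest common vertical cover of Definition~\ref{def:gcd-cover}; this identifies the first branch with $\Hh$.

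For the remaining values $\l=\l_2 d_n$ the point is $\ell_{L^{(\l)}}=(0,0)$, so $L^{(\l)}$ is a torsion class. Here the floors equal their arguments, and reducing via $E\sim C+T$, $G_j\sim\vdeg C$ and $c+e+\sum_j g_j=0$, the free $C$-part cancels and $L^{(\l)}\sim\frac{\l_2\vdeg e}{n}T\sim\frac{\l_2\vdeg e}{n}\sum_i q_iA_i-\frac{\l_2\vdeg e}{n}\alpha F$ by Remark~\ref{rem:divT}. Feeding the canonical coefficients $\hat a_i\equiv\frac{\l_2\vdeg e\,q_i}{n}\bmod d_i$ into equation~\eqref{eq:originNot0} of Proposition~\ref{prop:h10<} produces the second branch $-1+\sum_i\{-\frac{\l_2\vdeg e\,q_i}{n\,d_i}\}$, which is the $\l_2$-eigenspace of the $n$-cyclic vertical cover attached to the torsion divisor $-\vdeg e\,T$, that is, the middle cover $\Hm$. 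Since $\Hm$ has degree $n$, its only nonzero eigenspaces occur at $\zeta_d^{\l}$ with $d_n\mid\l$, so the two branches are supported on disjoint sets of eigenvalues and assemble into the asserted $\Hh\oplus\Hm$, compatibly with the eigenvalue grading and, via $H^0(\Omega^1)\cong\overline{H^1(\cO)}$, with the Hodge structure.

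The step I expect to be the main obstacle is the branch $\l=\l_2d_n$: one must pin down the exact torsion representative of $L^{(\l)}$ in $\tor\cl(S)$ and match it, \emph{with the correct orientation}, to the divisor $-\vdeg e\,T$ defining $\Hm$. This is where the sign inside the fractional part, the precise degree $n$, and the fact that no spurious contribution survives at $E\cap F$ must be reconciled against Proposition~\ref{prop:class}\ref{prop:class:3} and the choice of monodromy generator in Remark~\ref{rem:divT}; the direct lattice computation naturally produces $\frac{\l_2\vdeg e}{n}T$, and identifying this eigenspace with that of an honest $n$-fold cover of $\PP^1$ — in the normalization that fixes the sign — is the delicate point. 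By contrast, the strictly negative case $L^{(\l)}\cdot F<0$ is routine once Proposition~\ref{lemma:h1_horizontal} is available.
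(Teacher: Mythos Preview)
Your approach is essentially the paper's: split according to whether $c_{L^{(\l)}}=L^{(\l)}\cdot F$ is zero or strictly negative, apply Proposition~\ref{prop:h10<} in the first case and Proposition~\ref{lemma:h1_horizontal} in the second, and then match with covers of $\PP^1$. The formulas in~\eqref{eq:h1_pure-hor} come out exactly as you outline.

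There are, however, two genuine gaps in the identification $H^1(S_d,\cO_{S_d})\cong\Hh\oplus\Hm$. First, you \emph{assert} that the primitive vertical cover $\pi_X$ equals the greatest common vertical cover of degree~$d$, but this requires proof: one must check, for each special fiber $A_i$, that the restriction $\pi_{A_i}$ is the pull-back of $\pi_X$ along $z\mapsto z^{\vdeg/d_i}$. The paper does this by performing the $(q_i',1)$-blow-up at $A_i\cap E$ (and the analogous one at $A_i\cap C$), reading off that the multiplicity of the new exceptional divisor in $D'$ is $\frac{\vdeg e}{d_i}$ (resp.\ $\frac{\vdeg c}{d_i}$), and concluding that diagram~\eqref{pull-back-Ai-1a} holds; this is where the hypothesis $T'\sim 0$ is really used and is what forces $\hat e=d$ rather than a proper divisor.

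Second, your disjoint-support argument is incomplete. You correctly note that $\Hm$ contributes only at eigenvalues $\zeta_d^{\l}$ with $d_n\mid\l$, but $\Hh=H^1(X,\cO_X)$ is the cohomology of a $d$-fold cover of $\PP^1$ and a priori contributes at \emph{all} $\l$. For the direct sum to match the eigenspace formula you must show that the $\l$-eigenspace of $\pi_X$ vanishes when $\l=\l_2 d_n$. The paper checks this directly: $\deg L_\vdeg^{(\l_2 d_n)}=-\{\frac{\l_2 c}{n}\}-\{\frac{\l_2 e}{n}\}\in\{0,-1\}$ (using $\frac{\l_2 g_j}{n}\in\ZZ$ and $\frac{\l_2 c}{n}+\frac{\l_2 e}{n}\in\ZZ$), whence $h^1(\PP^1,\cO_{\PP^1}(L_\vdeg^{(\l_2 d_n)}))=0$. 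Ironically, the point you flag as delicate---pinning down the torsion representative of $L^{(\l)}$ and its sign---is the routine part; the two verifications above are what actually need to be written down.
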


\begin{proof}
By~\eqref{eq:L-horizontal}, one obtains
\begin{equation}
\label{eq:cL-horizontal}
c_{L^{(\l)}}=L^{(\l)}\cdot F=-\left( \left\{ \frac{\l \vdeg c}{d}\right\}+\left\{ \frac{\l \vdeg e}{d}\right\}+
\vdeg\sum_{j=1}^s \left\{ \frac{\l g_j}{d}\right\}\right)\leq 0.
\end{equation}
Note that $c_{L^{(\l)}}=0$ exactly when $d_n|\l$, say $\l=\l_2d_n$ for some $\l_2\in\ZZ$, 
in which case $L^{(\l)}\sim\frac{\vdeg e}{n}\l_2T$. 
This falls in case~\eqref{eq:originNot0} of Proposition~\ref{prop:h10<} 
for $\hat{a}_i=\frac{\vdeg e q_i}{n}\l_2$ and one can check that 
the second part of formula~\eqref{eq:h1_pure-hor} follows.
For these terms one has
\[
\sum_{\l_2=0}^{n-1}h^1(S,\cO_S(L^{(\l_2d_n)}))=h^1(S_n,\cO_{S_n}),
\]
where $S_n$ is a vertical cover of $S$ associated with $(n,0,\frac{\vdeg e}{n}T)$ or equivalently, with $(n,-\vdeg eT,0)$. 
This is the \emph{vertical} cover producing~$\Hm$.

For the remaining terms $c_{L^{(\l)}}<0$ and hence, by the vanishing result in Proposition~\ref{lemma:h1_horizontal}
and~\eqref{eq:floorfloor}, one has
\begin{equation}
\label{eq:h1L-2}
h^1(S,\cO_S(L^{(\l)}))=-1-\left\lfloor \frac{\l c}{d}\right\rfloor-\left\lfloor \frac{\l e}{d}\right\rfloor
-\sum_{j=1}^s \left\lfloor \frac{\l g_j}{d}\right\rfloor
=-1+\left\{ \frac{\l c}{d}\right\}+\left\{ \frac{\l e}{d}\right\}+\sum_{j=1}^s \left\{ \frac{\l g_j}{d}\right\},
\end{equation}
where the last equality follows from $\lfloor x\rfloor=x-\{x\}$ and $c+e+\sum g_j=0$. This proves the first part of~\eqref{eq:h1_pure-hor}.

Now, let us describe the restrictions of the original $d$-covering to the curves $A_i$. Recall that 
the $d$-cover $\pi_F:\fd\to F\cong\PP^1$ is the pull-back of a cover $\pi_X:X\to\PP^1$ 
by the cyclic cover $z\mapsto z^\vdeg$.

In order to describe the restriction to $A_i$ one needs to perform a blow-up at the singular points of $S$ on $A_i$ as explained 
in Proposition~\ref{prop:restriccion} and the preceding paragraph. In particular, it is enough to perform a 
$(q'_i,1)$-weighted (resp.~$(d_i-q'_i,1)$) blow-up at $A_i\cap E$ (resp.~$A_i\cap C$), where 
$q_i q'_i\equiv 1 \bmod{d_i}$.

In addition, note that $D'\sim 0$ and horizontal
implies that the multiplicity of the exceptional component $E_i$ (resp.~$C_i$) of the blowing-up 
of $A_i\cap E$ (resp.~$A_i\cap C$) is 
$\frac{\vdeg e}{d_i}$ (resp.~$\frac{\vdeg c}{d_i}$), i.e., the following diagram holds:
\begin{equation}\label{pull-back-Ai-1a}
\begin{tikzcd}
\ai{d}\ar[d,"\pi_{A_i}"]\ar[r]&X\ar[d,"\pi_X"]\\
A_i\ar[r]&\PP^1\\[-25pt]
z\ar[r,mapsto]&z^{\frac{\vdeg}{d_i}}
\end{tikzcd}
\end{equation}
As a consequence, the covering $\pi_X$ is the \emph{greatest common vertical covering} of $\pi$ (the divisor~$e$ of Definition~\ref{def:gcd-cover}
is exactly~$d$).

Denote by $L_\vdeg^{(\l)}$ the Esnault-Viehweg divisors in $\PP^1$
associated with $\pi_X$.
Note that 
\[
L_\vdeg^{(\l)}=\left\lfloor \frac{\l c}{d}\right\rfloor P_C+\left\lfloor \frac{\l e}{d}\right\rfloor P_E
+\sum_{j=1}^s \left\lfloor \frac{\l g_j}{d}\right\rfloor P_{G_j},
\]
where 
\[
\deg L_\vdeg^{(\l_2d_n)}=
-\left\{\frac{\l_2 c}{n}\right\}-
\left\{\frac{\l_2 e}{n}\right\}-
\sum_{j=1}^s \left\{\frac{\l_2 g_j}{n}\right\}=
-\left\{\frac{\l_2 c}{n}\right\}-\left\{\frac{\l_2 e}{n}\right\}=
\begin{cases}
0 & \textrm{ if } \frac{\l_2 c}{n}\in \ZZ,\\
-1 & \textrm{ otherwise. }
\end{cases}
\]
The last equality follows since $\frac{\l_2 c}{n}+\frac{\l_2 e}{n}=-\sum_j \frac{\l_2 g_j}{n}\in \ZZ$.
Hence $h^1(\PP^1,\cO_{\PP^1}(L_\vdeg^{(\l_2d_n)}))=0$ either way (see~\eqref{eq:h1-deg}). This shows that 
\[
h^1(X,\cO_X)=\sum_{\l=0}^{d-1} h^1(\PP^1,\cO_{\PP^1}(L_\vdeg^{(\l)}))=\sum_{\l=0, \, d_n\nmid\l}^{d-1} h^1(\PP^1,\cO_{\PP^1}(L^{(\l)})),
\]
which ends the proof.
\end{proof}

According to Reduction~\ref{red:horizontal}, the general horizontal case can be given by $(d,D',T')$, 
where $T'\in\tor\cl(S)$ as in Lemma~\ref{lem:Hhorizontal}. 
The $H^1$-eigenspace decomposition in this case splits into a purely horizontal and a vertical parts 
as follows.

\begin{prop}
\label{prop:horizontal}
Let $\pi:S_d\to S$ be the horizontal cyclic cover of $S$ associated with $(d,D',T')$. 
Then its $H^1$-eigenspace decomposition splits as $\Hh\oplus \Hv$, where $\Hv$ comes from 
the vertical cover associated with $(n,dT'-\vdeg eT,0)$ and $\Hh$ comes from the greatest common 
vertical cover of $\pi$ which is of degree~$\t$.
In particular,
it decomposes as a direct sum of the cohomology of two cyclic covers of~$\PP^1$ and the splitting respects the eigenspaces of the monodromy and the Hodge structure. 
\end{prop}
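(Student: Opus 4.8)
The plan is to run Esnault--Viehweg's method (Theorem~\ref{thm:ev}) for the cover $(d,D',T')$ produced by Reduction~\ref{red:horizontal}, using the divisors $L^{(\l)}$ and their representatives $\tilde L^{(\l)}$ already computed in~\eqref{eq:L-horizontal}, and then to sort the index set $\{0,\dots,d-1\}$ according to two divisibility conditions on~$\l$. The starting observation that makes the bookkeeping work is that both $T$ and $T'$ are torsion, so they intersect trivially with $C$ and $F$; hence $\ell_{L^{(\l)}}=(L^{(\l)}\cdot C,L^{(\l)}\cdot F)=(0,c_{L^{(\l)}})$ is exactly the same function of $\l$ as in the purely horizontal situation of Proposition~\ref{prop:purehorizontal}, and in particular $c_{L^{(\l)}}=0$ if and only if $d_n\mid\l$. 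The second relevant condition is membership $L^{(\l)}\in\cl_H(S)$: the first two summands of $\tilde L^{(\l)}$ lie in $\cl_H(S)$ (as $C,T\in\cl_H(S)$), so $\tilde L^{(\l)}\in\cl_H(S)$ reduces to $\l T'\in\cl_H(S)$; since the horizontal torsion is $\langle T\rangle$ and $\langle T'\rangle\cap\langle T\rangle=0$ by Lemma~\ref{lem:Hhorizontal}, this happens exactly when $\l T'\sim 0$, i.e. when $d_\t\mid\l$.

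With these two conditions I would split the sum $H^1(S_d,\cO_{S_d})=\bigoplus_\l H^1(S,\cO_S(L^{(\l)}))$ into three regions. If $d_n\nmid\l$ and $d_\t\nmid\l$, then $c_{L^{(\l)}}<0$ while $L^{(\l)}\notin\cl_H(S)$, so $h^1(S,\cO_S(L^{(\l)}))=0$ by Proposition~\ref{lemma:h1_horizontal}\ref{lemma:h1_horizontal_i} and these indices contribute nothing. If $d_n\nmid\l$ but $d_\t\mid\l$, then $L^{(\l)}\in\cl_H(S)$ with $c_{L^{(\l)}}<0$, and Proposition~\ref{lemma:h1_horizontal}\ref{lemma:h1_horizontal_ii} gives the same fractional-part formula as in~\eqref{eq:h1_pure-hor}; reparametrising $\l=d_\t k$ with $k\in\{0,\dots,\t-1\}$, these are precisely the Esnault--Viehweg terms of a cyclic cover of degree~$\t$, the terms with $d_n\mid d_\t k$ dropping out exactly as the degree-zero divisors on $\PP^1$ do at the end of the proof of Proposition~\ref{prop:purehorizontal}. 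This summand is $\Hh$, and I would identify it with the greatest common vertical cover by repeating the local restriction-to-$A_i$ analysis of that proof (diagram~\eqref{pull-back-Ai-1a}): the class $T'$ shifts the exceptional multiplicities at the quotient points on each $A_i$, so the pull-back factorisation through $z\mapsto z^{\vdeg/d_i}$ now survives only after passing to the intermediate cover of degree $e_i$, and $\hat e=\gcd(e_i)=\t$ in the sense of Definition~\ref{def:gcd-cover}. Finally, for $d_n\mid\l$ one has $\ell_{L^{(\l)}}=(0,0)$, so $\tilde L^{(\l)}\sim\floor{\frac{\l\vdeg e'}{d}}T-\l T'$ and Proposition~\ref{prop:h10<}, formula~\eqref{eq:originNot0}, applies; writing $\l=\l_2 d_n$ these become the degree-$n$ Esnault--Viehweg terms of the vertical cover attached to $(n,dT'-\vdeg eT,0)$, giving $\Hv$, which Proposition~\ref{prop:vertical-covers-case} in turn realises as a cyclic cover of $\PP^1$.

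Collecting the three regions yields $H^1(S_d,\cO_{S_d})=\Hh\oplus\Hv$. Since the summands are grouped by the eigenvalue $\zeta_d^{\l}$ of the monodromy and the whole construction is the one underlying Theorem~\ref{thm:ev}, this is a splitting of monodromy modules, and the Hodge structure is transported along it through $H^0(S_d,\Omega^1_{S_d})\cong\overline{H^1(S_d,\cO_{S_d})}$; this gives the final ``in particular'' assertion, both factors being cohomology of cyclic covers of~$\PP^1$.

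The step I expect to be the main obstacle is the precise identification of $\Hh$ with the greatest common vertical cover of degree exactly~$\t$: one must verify at each special fibre $A_i$ that the torsion class $T'$ obstructs the factorisation of $\pi_{A_i}$ through $z\mapsto z^{\vdeg/d_i}$ down to the intermediate level $e_i$, and that $\gcd(e_1,\dots,e_r)=\t$. This is a local computation at the quotient singularities $P_i,Q_i$ built on Proposition~\ref{prop:restriccion}, and it is exactly where the order $d_\t$ of $T'$ enters the degree of the cover. A secondary but routine point is to confirm, writing $T'\sim\sum_{i=1}^r\alpha_iA_i$ and $T\sim\sum_{i=1}^r q_iA_i-\alpha F$, that the divisor $dT'-\vdeg eT$ reproduces index-by-index (for $\l=\l_2 d_n$) the same $A_i$-coefficients modulo $d_i$ as $\floor{\frac{\l_2\vdeg e'}{n}}T-\l_2 d_n T'$, using the relation $e=d\eta+\vdeg e'$ from Lemma~\ref{lem:Hhorizontal}.
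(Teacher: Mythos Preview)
Your proposal is correct and follows essentially the same approach as the paper's own proof: both split the Esnault--Viehweg sum according to whether $d_n\mid\l$ (giving the vertical piece $\Hv$ via Proposition~\ref{prop:h10<}) and, for $c_{L^{(\l)}}<0$, whether $\l T'\sim 0$ (giving the horizontal piece $\Hh$ via Proposition~\ref{lemma:h1_horizontal}), then identify $\Hh$ with the greatest common vertical cover of degree~$\t$ through the local restriction-to-$A_i$ computation. Your explanation of why $L^{(\l)}\in\cl_H(S)\Leftrightarrow\l T'\sim 0$ via $\langle T'\rangle\cap\langle T\rangle=0$ is in fact more explicit than the paper's, and you correctly flag the local computation at the quotient points (the paper carries this out by writing $d\alpha_i=d_i\delta_i$ and showing $\delta_i\equiv 0\bmod\t$) as the substantive step.
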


\begin{proof}
The proof runs along the lines of that of Proposition~\ref{prop:purehorizontal}. We will highlight the differences.
Formula~\eqref{eq:cL-horizontal} holds and in this case $c_{L^{(\l)}}=0$ implies
\[
L^{(\l)}\sim\l_2
\underbrace{\left(
\frac{\vdeg e}{n} T -d_n T'\right)}_{=:T''},\qquad 
L^{(\l+d_n)}\sim L^{(\l)}+T''.
\]
\begin{enumerate}[label=(L\arabic{enumi})]
 \item\label{l2} 
 The case $c_{L^{(\l)}}=0$ is equivalent to $\l=\l_2d_n$, i.e.~$L^{(\l)}\sim\l_2 T''$. 
 The value of $h^1(S,\cO_S(L^{(\l)}))$ has been computed in Proposition~\ref{prop:h10<}. 
 More precisely, this case corresponds with the vertical cover associated with 
 $(n,0,T'')$ (or equivalently $(n,dT'-\vdeg eT,0)$) considered in section~\ref{sec:vertical-coverings}.

 \item\label{l1} 
 If $c_{L^{(\l)}}<0$, then by the vanishing result in Proposition~\ref{lemma:h1_horizontal}, 
 $h^1(S,\cO_S(L^{(\l)}))\neq 0$ only if $\l T'\sim 0$. Hence, we assume $\l=\l_1d_\t$. 
 By Proposition~\ref{lemma:h1_horizontal} and \eqref{eq:floorfloor} one has
\begin{equation}\label{eq:h1_hor}
h^1(S,\cO_S(L^{(\l)}))=-1+
\left\{\frac{\l_1 c}{\t}\right\}+
\left\{\frac{\l_1 e}{\t}\right\}+
\sum_{j=1}^s \left\{\frac{\l_1 g_j}{\t}\right\}.
\end{equation}
Recall that the condition of Reduction~\ref{red:horizontal} implies $0\sim D'\sim dT'\sim \t (d_\t T')$ is a horizontal divisor.
\end{enumerate}
In order to describe the restriction to $A_i$ one needs to perform a blow-up at the singular points of $S$ on $A_i$ as explained 
in Proposition~\ref{prop:restriccion} and the preceding paragraph. In particular, it is enough to perform a 
$(q'_i,1)$-weighted (resp.~$(d_i-q'_i,1)$) blow-up at $A_i\cap E$ (resp.~$A_i\cap C$), where $q_i q'_i\equiv 1 \bmod{d_i}$
and hence one can find $h_i\in\ZZ$ such that $q_i q'_i=1+h_i d_i$.

In addition, note that $D'\sim dT'\sim 0$ implies the existence of integers $\delta_i$ such that $d\alpha_i=d_i\delta_i$
($\alpha_i$ is the coefficient of $T'$ in $A_i$).
Hence, the multiplicity of the exceptional component $E_i$ of the blowing-up of $A_i\cap E$ is
\[
\frac{\vdeg e+q'_i q_i d\alpha_i}{d_i}=e\frac{\vdeg}{d_i}+d h_i\alpha_i+\frac{d\alpha_i}{d_i}=
\frac{\vdeg}{d_i}e+d h_i\alpha_i+\delta_i\equiv \frac{\vdeg}{d_i}e+\delta_i\bmod{d}, 
\]
since the multiplicities are only relevant modulo $d$.
Analogously, one can compute the multiplicity of the exceptional component $C_i$ of the blowing-up of $A_i\cap C$ as~$\frac{\vdeg}{d_i}c
-\delta_i$. 

Summarizing, the restriction of the original $d$-covering to the curve $A_i$ is a $d$-cover of $A_i$ ramified at $\frac{\vdeg}{d_i}s+2$ points
with multiplicities $\frac{\vdeg}{d_i}e+\delta_i$ (at $P_{E_i}=A_i\cap{E_i}$), 
$\frac{\vdeg}{d_i}c-\delta_i$ (at $P_{C_i}=A_i\cap{C_i}$), and 
$g_j$, $j=1,...,s$ (at each of the $\frac{\vdeg}{d_i}$ points of $G_j\cap{A_i}$).

Let us consider the intermediate $\t$-cover.
Recall that $d_\t T'\sim 0$. There exists $\beta_i$ such that $d_\t\alpha_i=d_i\beta_i$:
\[
d_i\delta_i=d\alpha_i=\t(d_\t\alpha_i)=\t(d_i\beta_i) \ \Rightarrow \ \delta_i\equiv 0\bmod{\t}.
\]
These congruences show that the degree of the greatest common vertical cover is $\t$.
Its characteristic polynomial~$\Delta_{2,h}(t)$ can be computed with the help of the divisors $L_\t^{(\l_1)}$
using again 
Lemma~\ref{covering_proj_line}. The same argument
about $\deg L_\t^{(\l_1)}$ shows that the terms $\deg L_\t^{(\l_1)}$ for which $\l=\l_1\t$ is a multiple of $d_n$
do not contribute to this horizontal part.
This completes the proof.
\end{proof}

\begin{remark}
The degree of the greatest common vertical cover can be obtained in two ways. From the above proposition, it can be obtained algebraically in terms
of the torsion order of the divisor~$T'$. And from the definition, it can be obtained topologically as the highest divisor of~$d$ for which
the covers over $A_i$ are pull-back of the primitive vertical cover of~$\pi$.
\end{remark}

\begin{cor}
\label{cor-Alex-horizontal}
The characteristic polynomial $\Delta_{2}(t)$ of the monodromy of a horizontal cover of $S$ associated with $(d,D',T')$
as above, factorizes as $\Delta_{2}(t)=\Delta_{2,h}(t)\Delta_{2,m}(t)$, where $\Delta_{2,h}(t)$ and $\Delta_{2,m}(t)$ 
are Alexander polynomials of coverings of~$\PP^1$.
\end{cor}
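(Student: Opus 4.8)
The plan is to obtain the factorization as a formal consequence of the eigenspace splitting established in Proposition~\ref{prop:horizontal}, using the elementary fact that the characteristic polynomial of the monodromy $\sigma$ is \emph{multiplicative} over any $\sigma$-invariant direct sum decomposition: if $H^1(S_d,\CC)=V_1\oplus V_2$ with each $V_i$ stable under $\sigma$, then the characteristic polynomial of $\sigma$ equals the product of the characteristic polynomials of $\sigma|_{V_1}$ and $\sigma|_{V_2}$. Thus the whole task reduces to exhibiting the two summands as $\sigma$-invariant subspaces, each realised as the first cohomology of an honest cyclic cover of $\PP^1$.

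First I would invoke Proposition~\ref{prop:horizontal} to get the splitting $H^1(S_d,\cO_{S_d})\cong\Hh\oplus\Hv$, where $\Hv$ is the first cohomology of the vertical cover associated with $(n,dT'-\vdeg eT,0)$ and $\Hh$ is the first cohomology of the greatest common vertical cover (of degree~$\t$). The key point is that this proposition asserts the splitting respects the eigenspaces of the monodromy, so $\Hh$ and $\Hv$ are genuinely $\sigma$-stable and not merely subspaces of the right dimension. To upgrade the statement from $H^1(S_d,\cO_{S_d})$ to the full $H^1(S_d,\CC)$ on which the characteristic polynomial is defined, I would use the Hodge decomposition $H^1(S_d,\CC)=H^1(S_d,\cO_{S_d})\oplus H^0(S_d,\Omega^1_{S_d})$ together with $H^0(S_d,\Omega^1_{S_d})\cong\overline{H^1(S_d,\cO_{S_d})}$; since complex conjugation commutes with the Hodge symmetry and the monodromy action is compatible with both summands, the $\Hh\oplus\Hv$ splitting persists at the level of $H^1(S_d,\CC)$.

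Next I would identify each factor as the Alexander polynomial of a cover of $\PP^1$. The vertical summand $\Hv$ is, by case~\ref{l2} of the proof of Proposition~\ref{prop:horizontal}, the $H^1$ of the vertical cover $(n,dT'-\vdeg eT,0)$, which is a cyclic cover of $\PP^1$ in the sense of section~\ref{sec:vertical-coverings}; its characteristic polynomial $\Delta_{2,m}(t)$ is then read off from the divisors $L^{(\l)}$ via~\eqref{eq:acampo}, exactly as in Corollary~\ref{cor:Alexander-vertical}. The horizontal summand $\Hh$ is the $H^1$ of the greatest common vertical cover, a $\t$-cyclic cover of $\PP^1$; its characteristic polynomial $\Delta_{2,h}(t)$ is computed from the Esnault--Viehweg divisors $L_\t^{(\l_1)}$ on $\PP^1$ together with~\eqref{eq:acampo}. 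Setting $\Delta_2(t)=\Delta_{2,h}(t)\Delta_{2,m}(t)$ then follows immediately from multiplicativity.

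I do not anticipate a serious obstacle, as the corollary is essentially a repackaging of Proposition~\ref{prop:horizontal}. The only point demanding care is to check that the two summands are each realised as the cohomology of a \emph{true} cyclic cover of $\PP^1$, so that formula~\eqref{eq:acampo} applies verbatim with the correct ramification data (the points $P_C,P_E$ and the images of the $G_j$, with their multiplicities modulo $\t$ and modulo $n$ respectively), and not just as abstract eigenspaces of the right total dimension. This identification is precisely what the eigenspace-respecting assertion of Proposition~\ref{prop:horizontal} provides.
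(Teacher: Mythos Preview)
Your proposal is correct and follows essentially the same route as the paper's proof: invoke Proposition~\ref{prop:horizontal} for the $\sigma$-invariant splitting $\Hh\oplus\Hv$, then identify each summand with the first cohomology of a cyclic cover of~$\PP^1$ (the paper phrases the $\Hv$ identification via Proposition~\ref{prop:vertical-covers-case} rather than Corollary~\ref{cor:Alexander-vertical}, but these amount to the same thing). Your additional care with the Hodge passage from $H^1(S_d,\cO_{S_d})$ to $H^1(S_d,\CC)$ and the multiplicativity of characteristic polynomials is implicit in the paper and does no harm; one minor slip is calling the vertical cover $(n,dT'-\vdeg eT,0)$ itself ``a cyclic cover of $\PP^1$''---it is a cover of $S$, and it is its \emph{restriction to $E$} (per Proposition~\ref{prop:vertical-covers-case}) that is the $\PP^1$-cover whose Alexander polynomial is~$\Delta_{2,m}(t)$.
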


\begin{proof}
By Proposition~\ref{prop:horizontal}, the $H^1$-eigenspace decomposition of the cover splits as a 
direct sum $\Hh\oplus \Hm$. By Proposition~\ref{prop:vertical-covers-case}, the characteristic 
polynomial $\Delta_{2,m}(t)$ of the monodromy associated with $\Hm$ corresponds with that of the 
restriction of the vertical cover $(n,0,T'')$ to $E$ (or $C$), whereas $\Delta_{2,h}(t)$ corresponds 
with that of the horizontal $\t$-cover described in the proof.
\end{proof}

\subsection{General case}\label{sec:general-coverings}
\mbox{}

To end this section, we are in the position to describe the general case, that is, 
$\pi: S_d \to S$ is a covering associated with $(d,D,H)$ such that $D \sim dH$ and
\[
D = \sum_{j \in J} m_j D_j \in \Div(S),
\qquad H = \gamma C + \eta E + \sum_{i=1}^r \alpha_i A_i 
\in \cl(S),
\]
where $m_j \in \ZZ_{>0}$, $D_j$ is an irreducible (effective) divisor, $j \in J$,
$\gamma, \eta, \alpha_i \in \ZZ$, $i=1,\ldots,r$ and the reduced support $D_{\text{red}}$ of $D$ is a
$\QQ$-simple normal crossing divisor.
Following Lemma~\ref{lem:hv} we decompose $D$ as 
\begin{equation}\label{eq:div_dec}
D=\mcdhv_h D_h+\mcdhv_v D_v+\mcdhv_s D_s
\end{equation}
where $D_h$ is horizontal, $D_v$ is vertical and $D_s$ is slanted (see Lemma~\ref{lem:hv}); 
all of them are primitive (i.e., the $\gcd$ of their multiplicities equals~$1$). We introduce the following notation:
\begin{equation*}
d^h:=\gcd(d,\mcdhv_v,\mcdhv_s),\qquad 
d^v:=\gcd(d,\mcdhv_h,\mcdhv_s).
\end{equation*}
The following result describes the $H^1$-eigenspace decomposition of a general cyclic cover of $S$. 
The notation used is defined in sections~\ref{sec:vertical-coverings} 
(see Proposition~\ref{prop:vertical-covers-case}) and~\ref{sec:horizontal-coverings}
(see Proposition~\ref{prop:horizontal}).

\begin{theorem}
\label{thm:gencase}
Consider $\pi:S_d\to S$ the cyclic cover of $S$ associated with $(d,D,H)$ as above. 
Then $H^1(S;\cO_S)=\Hh\oplus \Hv$,
$\Hv:=H^1(E_{d^v};\cO_{E_{d^v}})$ and $\Hh:=H^1(X_{e^h};\cO_{X_{e^h}})$, where $\pi_E:E_{d^v}\to E\cong\PP^1$ is the restriction of the intermediate 
$d^v$-cover to $E$ and $\pi_X:X_{e^h}\to\PP^1$ is the greatest common vertical cover of the intermediate $d^h$-cover ($e^h$ is a divisor 
of $d^h$ as in Proposition{\rm~\ref{prop:horizontal}}). The eigenspace decomposition of 
$H^1(S;\cO_S)=\Hh\oplus \Hv$ is the direct sum of the natural eigenspace decompositions of 
$\Hh$ and~$\Hv$. In particular, each factor of the splitting is associated with a $\PP^1$-cover.
\end{theorem}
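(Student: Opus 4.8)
The plan is to run the Esnault--Viehweg machinery (Theorem~\ref{thm:ev}) on the full $d$-cover and then partition the resulting index set according to the two rulings of $S$, matching each block with one of the intermediate covers already analyzed in \S\ref{sec:vertical-coverings} and \S\ref{sec:horizontal-coverings}. First I would replace $(d,D,H)$ by $(d,\tilde D,0)$ with $\tilde D=D-dH\sim 0$, so that
\[
H^1(\dcover,\cO_{\dcover})=\bigoplus_{\l=0}^{d-1}H^1(S,\cO_S(L^{(\l)})),
\]
the $\l$-th summand carrying the eigenvalue $\zeta_d^{\l}$. Applying~\eqref{eq:int_form_Ll} with $B=F$ and $B=C$, and using $\tilde D\sim 0$, both lattice coordinates
\[
c_{L^{(\l)}}=L^{(\l)}\cdot F=-\sum_{i}\left\{\tfrac{\l m_i}{d}\right\}(D_i\cdot F),\qquad
\varphi_{L^{(\l)}}=L^{(\l)}\cdot C=-\sum_{i}\left\{\tfrac{\l m_i}{d}\right\}(D_i\cdot C)
\]
are nonpositive, so $\ell_{L^{(\l)}}$ always lies in the closed third quadrant of the divisor lattice $L$.

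Next I would invoke Theorem~\ref{prop:h1}: in the third quadrant $H^1(S,\cO_S(L^{(\l)}))$ can be nonzero only on the two axes, i.e.\ when $c_{L^{(\l)}}=0$, or when $\varphi_{L^{(\l)}}=0$ with $c_{L^{(\l)}}\le -2$ (the open third quadrant lands in the $H^2$-cone, and $c_{L^{(\l)}}=-1$ forces total vanishing). By Lemma~\ref{lem:hv} only the horizontal and slanted components of $D$ meet $F$, while only the vertical and slanted components meet $C$. Since by~\eqref{eq:div_dec} these carry multiplicities which are $\mcdhv_h,\mcdhv_v,\mcdhv_s$ times coprime primitive ones, a short $\gcd$ computation, together with the identity $\lcm\!\big(\tfrac{d}{\gcd(d,\mcdhv_h)},\tfrac{d}{\gcd(d,\mcdhv_s)}\big)=\tfrac{d}{\gcd(d,\mcdhv_h,\mcdhv_s)}$, yields
\[
c_{L^{(\l)}}=0\iff \tfrac{d}{d^v}\mid\l,\qquad \varphi_{L^{(\l)}}=0\iff \tfrac{d}{d^h}\mid\l .
\]
This lets me split the index set as $J_v:=\{\l:\tfrac{d}{d^v}\mid\l\}$ (the $c=0$ locus, of size $d^v$) and its complement, on which a nonzero summand forces $\tfrac{d}{d^h}\mid\l$.

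I would then define $\Hv:=\bigoplus_{\l\in J_v}H^1(S,\cO_S(L^{(\l)}))$ and $\Hh$ the sum over the remaining nonzero summands. The intermediate $d^v$-cover has monodromy trivial on every horizontal and slanted meridian (because $d^v\mid\mcdhv_h,\mcdhv_s$), hence is a genuine \emph{vertical} cover; Proposition~\ref{prop:vertical-covers-case} identifies its $H^1$ with $H^1(E_{d^v},\cO_{E_{d^v}})$, and a floor-function comparison of the two families of Esnault--Viehweg divisors gives $\Hv=H^1(E_{d^v},\cO_{E_{d^v}})$. Dually the intermediate $d^h$-cover is a \emph{horizontal} cover, so Proposition~\ref{prop:horizontal} decomposes its $H^1$ into a greatest-common-vertical part $H^1(X_{e^h},\cO_{X_{e^h}})$, of degree $e^h\mid d^h$, and a mixed vertical part $\Hm$. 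The latter is supported exactly on $J_v\cap\{\tfrac{d}{d^h}\mid\l\}$ and is therefore already counted inside $\Hv$, while the former matches my $\Hh$. This gives $H^1(\dcover,\cO_{\dcover})=\Hh\oplus\Hv$. Because each $L^{(\l)}$ is a single $\zeta_d^{\l}$-eigenspace and the partition is governed by the single scalar $c_{L^{(\l)}}$, the monodromy eigenspaces and the Hodge decomposition $H^0(\Omega^1)\cong\overline{H^1(\cO)}$ respect the splitting.

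The main obstacle I anticipate is the bookkeeping in the last step: one must verify that the Esnault--Viehweg divisors of the full cover, restricted to the blocks $J_v$ and $\{\tfrac{d}{d^h}\mid\l\}$, are \emph{linearly equivalent} to those produced intrinsically by the $d^v$- and $d^h$-covers. The delicate point is the slanted components, which are unramified in each intermediate cover yet still contribute integral (floor $=$ value) terms to $L^{(\l)}$; these must be absorbed using $\tilde D\sim 0$. One must also confirm that the mixed vertical part $\Hm$ of the horizontal $d^h$-cover is counted once, inside $\Hv$, and never double-counted against $\Hh$. Making this matching precise, rather than the vanishing dichotomy of the second paragraph, is where the real work lies.
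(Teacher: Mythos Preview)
Your proposal is correct and follows essentially the same route as the paper's proof: both run Esnault--Viehweg, use \eqref{eq:int_form_Ll} to place every $\ell_{L^{(\l)}}$ in the closed third quadrant, invoke Theorem~\ref{prop:h1} to kill the open quadrant, characterize the two axes by the divisibility conditions $\tfrac{d}{d^v}\mid\l$ and $\tfrac{d}{d^h}\mid\l$, and then match each block with the Esnault--Viehweg divisors of the corresponding intermediate cover. The paper carries out exactly the ``delicate point'' you flag at the end: it writes down $H_v\sim\tfrac{d}{d^v}H-\tfrac{\mcdhv_h}{d^v}D_h-\tfrac{\mcdhv_s}{d^v}D_s$ (and the analogous $H_h$) and checks literally that $L_v^{(\l_1)}=L^{(\l_1 d/d^v)}$, the slanted floors becoming integers precisely because $d^v\mid\mcdhv_s$; the mixed overlap is then identified with case~\ref{l2} of Proposition~\ref{prop:horizontal}, just as you outline.
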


\begin{proof}
The divisor $L^{(\l)}$ associated with Esnault and Viehweg's theory is
\[
L^{(\l)} = -\l H + \sum_{j \in J} \left\lfloor \frac{\l m_j}{d} \right\rfloor D_j.
\]
Let $B \in \Div(S)$ be any divisor. Since $D \sim dH$, the equality
\[
\l H \cdot B = \frac{\l}{d} \sum_{j \in J} m_j D_j \cdot B
\]
holds.
Taking into account \eqref{eq:int_form_Ll} in Remark~\ref{rem:Dtilde} one has 
\[
c_{L^{(\l)}}=L^{(\l)} \cdot F \leq 0\text{ and }
\varphi_{L^{(\l)}}=L^{(\l)} \cdot C \leq 0.
\]
According to Theorem~\ref{prop:h1},
if they are both negative, then $h^1(S,\cO_S(L^{(\l)}))=0$; the same happens if $\l$
is a multiple of~$d$. The remaining cases for $\l\in\ZZ$ are considered below.

\begin{enumerate}[label=\rm(\alph{enumi}),
itemindent=7mm,
leftmargin=0cm]
\item\label{casoa} $c_{L^{(\l)}}=0$.%

Let us denote by
$J_1 = \{ j \in J \mid c_{D_j}\neq 0 \}$, i.e., $D_j$ is a term of either $D_h$ or $D_s$. 
Then $c_{L^{(\l)}} = 0$ if and only if $d$ divides $\l m_j$, $\forall j \in J_1$, 
see Remark~\ref{rem:Dtilde}. The latter is equivalent to ask 
$
\frac{d}{d^v}$ to divide $\l$.
This way $\l$ can be written as $\l = \l_1 \frac{d}{d^v}$ for $\l_1 = 0,1,\ldots,d^v-1$. 
In fact, these values measure the action of the monodromy of the intermediate $d^v$-cover. 

Consider the cover of $S$ associated with $(d^v,\mcdhv_v D_v,H_v)$ where
\[
H_v \sim \frac{d}{d^v} H - \frac{\mcdhv_h}{d^v} D_h - \frac{\mcdhv_s}{d^v} D_s.
\]
This corresponds to a vertical cover.
Let us denote by $L_v^{(\l_1)}$ the Esnault-Viehweg divisors associated with this vertical cover.
A simple check shows that $L_v^{(\l_1)}=L^{\left(\l_1\frac{d}{d^v}\right)}$. Hence, this invariant part of the cohomology is described by 
a vertical cover of $d^v$
sheets in the sense of~\S\ref{sec:vertical-coverings}, which can be decomposed as 
the one of a cover of $\PP^1$ 
and has an associated characteristic polynomial~$\Delta_1(t)$.

\item\label{casob} $\varphi_{L^{(\l)}}=0$. 

Analogously to the previous case, one can define
$J_2 = \{ j \in J \mid \varphi_{D_j}\neq 0 \}$, i.e., $D_j$ is a term of either $D_v$ or $D_s$. 
Then $\varphi_{L^{(\l)}}=0$ if and only if
$\l$ can be written as $\l = \l_2 \frac{d}{d^h}$ for $\l_2 = 0,1,\ldots,d^h-1$.
These values determine
the action of the monodromy of the intermediate $d^h$-cover. 
Consider now the cover of $S$ associated with $(d^h, \mcdhv_h D_h,H_h)$, where
\[
H_v\sim \frac{d}{d^h} H-\frac{\mcdhv_v}{d^h} D_v-\frac{\mcdhv_s}{d^h} D_s.
\]
This is a horizontal cover. Analogously as in the previous case,
one can easily check that $L_h^{(\l_2)}=L^{\left(\l_2\frac{d}{d^h}\right)}$. Hence, this invariant part of the cohomology is described by
a horizontal cover of $d^h$ sheets in the sense of~\S\ref{sec:horizontal-coverings}, which can be decomposed as a 
direct sum of a horizontal cover (greatest common vertical cover) and a vertical cover, intermediate cover 
of order $\m_0:=\gcd(d,\mcdhv_h,\mcdhv_v,\mcdhv_s)$.

Let us denote by $\Delta_2(t)$ the characteristic polynomial
of the cover of $S$ associated with $(d^h, \mcdhv_h D_h,H_h)$,
by $\Delta_{2,h}(t),\Delta_{2,m}(t)$ the characteristic polynomials
of the horizontal and vertical covers 
of the preceding paragraph, see Corollary~\ref{cor-Alex-horizontal}.
Note that
$\Delta_2(t)=\Delta_{2,m}(t)\Delta_{2,h}(t)$.
\end{enumerate}

As word of caution, note that the previous two cases \ref{casoa} and \ref{casob} are not disjoint. 
That is why we need to consider a third case that accounts for repetitions.

\begin{enumerate}[label=\rm(\alph{enumi}),
itemindent=7mm,
leftmargin=0cm]
\setcounter{enumi}{2}
\item\label{casoc} $\ell_{L^{(\l)}}=(0,0)$, that is, $c_{L^{(\l)}}=\varphi_{L^{(\l)}}=0$.

Combining~\ref{casoa} and \ref{casob}, this case occurs whenever $\l=\l_0 \frac{d}{\m_0}$.
Consider the vertical cover of $S$ associated with $(\m_0,0,H_m)$, where
\begin{equation}
\label{eq:casoc}
D_m=0,\qquad 
H_m\sim \frac{d}{\m_0} H-\frac{1}{\m_0}D.
\end{equation}
Note that $H_m$ is a torsion class.
This case matches~\ref{l2} in Proposition~\ref{prop:horizontal}, which accounts for the vertical part $\Hv$ in the 
decomposition of the horizontal cover associated with part~\ref{casob}. In order to see this, it is enough to check 
that the divisors $L_m^{(\l_0)}:=L^{\left(\l_0\frac{d}{\mu_0}\right)}$ and $L_h^{(\l_0\frac{d^h}{\m_0})}$
are related
as follows 
\[
L_m^{(\l_0)}:=L^{(\l_0\frac{d}{\m_0})}=L^{(\l_0\frac{d^h}{\m_0}\frac{d}{d^h})}=L_h^{(\l_0\frac{d^h}{\m_0})}.
\]
In particular, $\Delta_0(t)=\Delta_{2,m}(t)$.\qedhere
\end{enumerate}
\end{proof}

As a result of the proof one obtains the following.

\begin{cor}
\label{cor-Alex-general}
The characteristic polynomial $\Delta(t)$ of the monodromy of a cover of $S$ associated with $(d,D,H)$
as above factorizes as $\Delta(t)=\frac{\Delta_1(t)\Delta_2(t)}{\Delta_0(t)}=\Delta_{1}(t)\Delta_{2,h}(t)$, where 
where $\Delta_1(t)$ and $\Delta_{2,h}(t)$ are the Alexander polynomials of covers of~$\PP^1$.
\end{cor}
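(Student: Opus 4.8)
The plan is to read off both factorizations directly from the case analysis already carried out in the proof of Theorem~\ref{thm:gencase}, using the fact that the characteristic polynomial of the monodromy on $H^1(S_d,\CC)$ is completely encoded by the eigenspace multiplicities $h^1(S,\cO_S(L^{(\l)}))$ of the invariant decomposition of Theorem~\ref{thm:ev}, together with the Hodge symmetry $H^0(S_d,\Omega^1_{S_d})\cong\overline{H^1(S_d,\cO_{S_d})}$. Under this dictionary a \emph{product} of characteristic polynomials corresponds to a direct sum of eigenspaces and a \emph{quotient} to their formal difference, so it suffices to track which indices $\l$ produce a nonzero contribution and to identify the polynomial attached to each family.

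First I would recall from the proof of Theorem~\ref{thm:gencase} that $h^1(S,\cO_S(L^{(\l)}))$ vanishes unless one of $c_{L^{(\l)}}=L^{(\l)}\cdot F$ or $\varphi_{L^{(\l)}}=L^{(\l)}\cdot C$ is zero. The indices with $c_{L^{(\l)}}=0$ (case~\ref{casoa}) are exactly the multiples of $d/d^v$ and reproduce the vertical $d^v$-cover, whose characteristic polynomial is $\Delta_1(t)$; the indices with $\varphi_{L^{(\l)}}=0$ (case~\ref{casob}) are the multiples of $d/d^h$ and reproduce the horizontal $d^h$-cover, of characteristic polynomial $\Delta_2(t)$. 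These two index families are not disjoint: their overlap is precisely the set treated in case~\ref{casoc}, namely the multiples of $d/\m_0$, which reproduce the vertical $\m_0$-cover of characteristic polynomial $\Delta_0(t)$. Since the overlap is counted once in each of the two families, an inclusion--exclusion of the corresponding eigenspaces yields $\Delta(t)=\frac{\Delta_1(t)\Delta_2(t)}{\Delta_0(t)}$.

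To obtain the second, simplified factorization I would invoke the splitting of the horizontal cover established in Proposition~\ref{prop:horizontal} and Corollary~\ref{cor-Alex-horizontal}, namely $\Delta_2(t)=\Delta_{2,h}(t)\,\Delta_{2,m}(t)$, where $\Delta_{2,h}$ is the characteristic polynomial of the greatest common vertical cover and $\Delta_{2,m}$ that of its vertical summand. The identification $\Delta_0(t)=\Delta_{2,m}(t)$ is exactly the content of the last lines of case~\ref{casoc} (the relation $L_m^{(\l_0)}=L_h^{(\l_0 d^h/\m_0)}$), so dividing gives $\Delta(t)=\Delta_1(t)\,\Delta_{2,h}(t)$. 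Finally, $\Delta_1(t)$ is the characteristic polynomial of the restriction of the vertical cover to $E\cong\PP^1$ (Corollary~\ref{cor:Alexander-vertical}) while $\Delta_{2,h}(t)$ is that of the greatest common vertical cover $X_{e^h}\to\PP^1$ (Definition~\ref{def:gcd-cover}), so both factors are indeed Alexander polynomials of covers of~$\PP^1$.

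I expect the only genuinely delicate point to be the inclusion--exclusion bookkeeping: one must be sure that the overlap of cases~\ref{casoa} and~\ref{casob} is accounted for with the correct multiplicity, and that the matching $\Delta_0=\Delta_{2,m}$ is an equality of polynomials (not merely of degrees). The latter rests on the explicit identification of the Esnault--Viehweg divisors $L_m^{(\l_0)}$ and $L_h^{(\l_0 d^h/\m_0)}$ made in case~\ref{casoc}, which shows that the two families attach the \emph{same} eigenvalue data—and not just the same dimension count—to the overlapping indices.
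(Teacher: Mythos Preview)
Your proposal is correct and follows essentially the same argument as the paper's own proof, which is only two sentences long: it extracts from the proof of Theorem~\ref{thm:gencase} the inclusion--exclusion identity $\Delta(t)=\frac{\Delta_1(t)\Delta_2(t)}{\Delta_0(t)}$, invokes the factorization $\Delta_2=\Delta_{2,h}\Delta_{2,m}$ from Corollary~\ref{cor-Alex-horizontal}, and cancels via the identification $\Delta_0=\Delta_{2,m}$ established in case~\ref{casoc}. Your version is simply a more explicit unpacking of the same steps, including the justification that both surviving factors come from covers of~$\PP^1$.
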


\begin{proof}
As a consequence of the proof of Theorem~\ref{thm:gencase}, the final Alexander polynomial is 
$\frac{\Delta_1(t)\Delta_2(t)}{\Delta_0(t)}=\frac{\Delta_1(t)\Delta_{2,m}(t)\Delta_{2,h}(t)}{\Delta_0(t)}$.
At the end of the proof it is shown that $\Delta_0(t)=\Delta_{2,m}(t)$, which completes the proof.
\end{proof}

\section{Examples and applications}
\label{sec:ex}

\subsection{Reducible normal fake quadrics of type \texorpdfstring{$(d_i,q_i)=(3,1),r=3$}{(di,qi)=(3,1),r=3}}
\mbox{}\label{subsec:r3}

Following Remark-Definition~\ref{rem:def:S} consider $S$ the surface associated with $(d_i,q_i)=(3,1)$, $i=1,2,3$. 
In this case $\alpha:=\frac{1}{3}+\frac{1}{3}+\frac{1}{3}=1$ and $\chi^{\textrm{orb}}=0$ (see~\eqref{eq:beta}).
Note that $\vdeg:=\lcm(d_1,d_2,d_3)=3$ and $\cl(S)\cong\ZZ^2\oplus\left(\ZZ/3\right)^2$ by Proposition~\ref{prop:class}.

\begin{ex}\label{ej1}
This example will highlight the relevance of the choice of the torsion class $H$ in the cohomology of the 
covering of $S$ associated with $(d,D,H)$ as well as the importance of the \emph{greatest common  vertical cover} 
introduced in Definition~\ref{def:gcd-cover}. We present different horizontal coverings associated with 
different torsion divisors whose cohomology invariants are different as well as their greatest common  vertical covers.

Let $\sigma_a:\hat{S}_a\to S$ be the cover associated with $(3,D,H_a)$, where $D:=G$, $H_a:= C-aT$, $a=0,1,2$, 
(recall $T:=E-C$). Note that $\sigma_a$ is a horizontal cover. One has 
\[
L^{(\l)} =-\l H_a+\left\lfloor \frac{\l}{3}\right\rfloor G=-\l H_a=\l aE-\l(a+1)C.
\]
Applying Reduction~\ref{red:horizontal}, it is enough to consider the covering of $S$ associated with $(3,D'_a,\!0)$, where
\[
D'_a=D-3H_a=-3(a+1)C+3aE+G\sim 0.
\]
By Proposition~\ref{prop:purehorizontal} one has
\[
\begin{aligned}
h^1(S_a,\cO_{S_a}(L^{(\l)}))=&
-1+\left\{ -\frac{\l (a+1)}{3} \right\} + \left\{ \frac{\l a}{3} \right\} + \left\{\frac{\l}{3} \right\} \\
=&\left\lfloor -\frac{\l (a+1)-1}{3} \right\rfloor - \left\lfloor -\frac{\l a}{3} \right\rfloor 
=
\begin{cases}
1 & \textrm{ if } \l=2, a=1\\
0 & \textrm{ otherwise.}
\end{cases}
\end{aligned}
\]
As a consequence, 
\[
h^1(\hat S_a,\cO_{\hat S_a})=
\begin{cases}
1 & \textrm{ if } a=1\\ 0 & \textrm{ otherwise.}
\end{cases}
\]
Let us consider the composition $\pi$ of the $(1,2)$ weighted blow-ups of the points 
$A_i\cap C$
and the $(1,1)$ weighted blow-ups of $A_i\cap E$. We will denote by $C_i$ (resp. $E_i$) the 
exceptional component resulting after the blow-up of $A_i\cap C$ (resp. $E_i$).
Then,
\[
\pi^*(D'_a)=G-3(a+1)C+3aE+\sum_{i=1}^3(a E_i-(a+1)C_i).
\]
The restriction of the covering to a generic fiber $F$ ramifies
at the $3$ intersection points with $G$ and thus $\pi^{-1}(F)$ is a curve of genus~$1$.
The restriction of the covering to $A_i$ ramifies at the point $A_i\cap G$ with index~$1$.
The other ramification points are $A_i\cap C_i$ (with index $2-a$) and $A_i\cap E_i$ (with index $a$).

In particular, for $a=0,2$, the cover of $A_i$ is rational while for $a=1$ it is a curve of genus~$1$.
Note that $\t=3$, hence the three restrictions coincide with their \emph{greatest common  vertical covering}, 
and the monodromy of the covering of $S$ coincides with the monodromy of the covering of each~$A_i$.

On the other hand, if one replaces $H_a$ by $H'_{a}:=H_a+A_1-A_2$, then $\t'=1$ and the 
\emph{new greatest common  vertical covering} is the identity, i.e., $H^1(\hat{S}'_a,\cO_{\hat S'_a})=0$, 
where $\hat{S}'_a$ denotes the respective covering. Note that the restrictions of both coverings over $F$ do not change.
\end{ex}

\begin{ex}
This example shows the effect of the \emph{mixed} part, see \eqref{eq:casoc}, in a horizontal covering. 
It also shows an example of a non-connected covering.

Consider $\sigma:\hat{S}\to \Si$,  
where $\Si$ is as at the beginning of \S\ref{subsec:r3},
with $d=6$, $D=3(C+G)$, $H=2 C$. After applying Reduction~\ref{red:horizontal}
one has a horizontal covering of $S$ associated to $(6,D',0)$, where $D'=3(G-3 C)\sim 0$ is a horizontal divisor and $n=3$. 
The induced covering over a generic fiber has three connected components, each one being a genus-$1$-curve obtained 
as a double covering of~$\PP^1$ ramified at four points (the three intersections with $G$ and the intersection with~$C$). 
In order to study the covering over $A_i$ we need to use~$\pi$ as in Example~\ref{ej1}, to obtain
\[
\pi^*(D')=3\left(G-3C -\sum_{i=1}^3E_{P_i}\right),
\]
i.e., the coverings of $A_i$ consist of three copies of $\PP^1$. In fact, $\hat{S}$ has three components
with vanishing $1$-homology.

Replacing the $H$ divisor class by $H=2 E$, the situation over $F$ does not change. Applying Reduction~\ref{red:horizontal} one has
$D'=3(G+C-4 E)$, $n=3$, $d_n=2$, and 
\[
L^{(2)}=G+C-4 E\sim 4(C-E)=-4 T\sim 2 T,\quad L^{(4)}\sim T.
\] 
Hence the \emph{mixed} component of the covering corresponds with a $3$-cover ramified over $D=0$ and $H=-T$, i.e.,
of type $(3,0,2T)\equiv(3,-6T,0)$. 

Since $h^1(S,\cO_S(T))=0$, and $h^1(S,\cO_S(2T))=1$ (see Proposition~\ref{prop:h10<} formula~\eqref{eq:axis1}), 
which correspond to the cohomology of a genus-$1$-curve as $3$-cover of~$\PP^1$. In fact, 
\[
\pi^*(D')=3G+C-4E +\sum_{i=1}^3 E_{P_i}-4\sum_{i=1}^3 E_{Q_i},
\]
and then the covers over $A_i$ are $6$-covers of $\PP^1$ ramified along three points with ramification indices $3,1,2$, 
i.e., a curve of genus~$1$. The final characteristic polynomial is $(t^2-t+1)(t^2+t+1)$.

Finally, replacing the $H$ divisor by $H=2E+A_1-A_2$, one obtains 
$L^{(2)}=G+C-4 E-2 A_1+2 A_2\sim -T-2 A_1+2A_2\sim A_2- A_3$, $L^{(4)}\sim A_3- A_2$.

The mixed component, that is, the invariant part coming from the greatest common  vertical covering has trivial monodromy. 
In this case $\t=3$, and the horizontal part is a $2$-covering of $\PP^1$ ramified in principle at three points with 
ramification indices $(3,1,2)\equiv(1,1,0)$, i.e., the covering is a $\PP^1$ and thus its monodromy is trivial.
\end{ex}

\subsection{A general example of cyclic coverings of reducible normal fake quadrics}\label{subsec:full_ex}
\mbox{}

This will describe a general example of a cyclic covering of a reducible normal fake quadric. 
The constructive proof of Theorem~\ref{thm:gencase} will be applied to this case in order to 
explicitly exhibit the vertical and horizontal $\PP^1$-coverings whose $H^1$-eigenspace 
invariant subspaces reconstruct the first cohomology of the cyclic covering.

Let us consider the reducible normal fake quadric $S$ associated with the numerical data 
$(d_1,d_2,d_3)=(6,9,18)$ and $(q_1,q_2,q_3)=(5,1,1)$ as described in Remark-Definition~\ref{rem:def:S}. 
Note that $\alpha=1$, $\vdeg=18$ and $\chi^{\textrm{orb}}=-\frac{2}{3}$.

We consider the divisors
\[
\begin{aligned}
D:=&90(C+E)+15 G_1+165 G_2+18 (2 A_1+ A_2+ 2 A_3),\\
H:=&E+G_2+ 2 A_1- 3 A_2+ A_3.
\end{aligned}
\]
Since 
\[
D-180 H=90(C-E)+15(G_1-G_2)+18(-18 A_1+31 A_2-8 A_3)\sim(-54+62-8)F=0,
\]
the cyclic covering $\pi_{180}:X\to S$ associated with $(180,D,H)$ is well defined.

Following the proof of Theorem~\ref{thm:gencase} one has to determine the \emph{vertical} component of the covering,
which is given by $(\mu_1,D_v,H_v)$, where
\[
\mu_1=\gcd(180,90,90,15,165)=15,\quad
D_v=18(2 A_1+ A_2+2 A_3),
\]
and
\begin{gather*}
\quad 
H_v=12 H-6(C+E)-G_1-11 G_2\sim 6(E-C)+24 A_1-36 A_2+12 A_3\sim\\
6(5 A_1+ A_2+A_3-F)+24 A_1-36 A_2+12 A_3=54 A_1 -30 A_2+18 A_3-6F\sim
F-3 A_2.
\end{gather*}
Applying Reduction~\ref{red:horizontal}, %
this vertical covering is  associated with $(\mu_1,D'_v,0)$, where
\[
D'_v=18(2 A_1+ A_2 +2 A_3)-15 F+45 A_2=36 A_1+63 A_2+36 A_3 -15 F.
\]
Using Proposition~\ref{prop:vertical-covers-case} we see that the monodromy of the \emph{vertical cover} $\pi_{15}$ coincides with the monodromy of the covering 
restricted to the preimage of $E$, associated with the divisor
\[
E_C=6\langle\gamma_1\rangle+ 7\langle\gamma_2\rangle+2\langle\gamma_3\rangle-
15\langle\infty\rangle;
\]
the characteristic polynomial of the action of the monodromy
on the first cohomology group of the structure sheaf of the cover
is
\[
\prod_{j\in\{2,4,6,7,12,14\}}\left(t-\exp\frac{2\pi j\sqrt{-1}}{15}\right);
\]
in particular, the characteristic polynomial of the covering is the product
of the cyclotomic polynomials for $5$ and~$15$.
Analogously, for the horizontal part, the covering is associated with $(\mu_2,D_h,H_h)$, where
\[
\mu_2=\gcd(180,36,18,36)=18,\quad
D_h=90(C+E)+15 G_1+165 G_2,
\]
and
\[
H_h=10 H-(2 A_1+A_2+2A_3)\sim 10 (E+G_2) +18 A_1-31 A_2+8 A_3\sim 10 (E+G_2)+4(2 A_3-A_2),
\]
which is the expression as in Lemma~\ref{lem:Hhorizontal}, where $T'=4(2A_3-A_2)$ is a torsion divisor of 
order~$9$ whose least common multiple with $T=5A_1+A_2+A_3-F$ is $0$. According to Reduction~\ref{red:horizontal}
note that $d_\t=9$, $\t=2$, and 
\[
n=\gcd(18,90,15,165)=3.
\]
In other words, one is interested in studying the horizontal covering associated with $(18,D'_h,T')$, where
\begin{equation}
\label{eq:Dhex}
D'_h=D_h-18H_h=90 (C-E)+15(G_1-G_2).
\end{equation}
The $H^1$-eigenspaces of this covering, according to Proposition~\ref{prop:horizontal}, can be 
recovered from its \emph{mixed} part and its greatest common vertical covering. The mixed part has already 
been considered in the vertical component, that is, in $(\mu_1,D_v,H_v)$ above. As for the greatest 
common vertical $2$-fold covering of the restrictions to the special fibers~$A_i$. From~\eqref{eq:Dhex}, the double 
covering of $\PP^1$ is ramified in principle at four distinct points with indices 
$\left(\frac{90}{18},-\frac{90}{18},15,-15\right)$ which are congruent with $(1,1,1,1)$ $\mod{2}$. 
Then the characteristic polynomial of this part is $\Delta_{2,h}(t)=t+1$ (the cover is an elliptic curve).

\subsection{Application to the cohomology of surfaces which are quotient of product of curves}
\mbox{}\label{subsec:yomdin}

Let us consider a horizontal covering $\pi:X\to S$ of a reducible normal fake quadric $S$ associated with $(d,D,0)$,
where 
\[
D= \vdeg(c C+e E)+\sum_{j=1}^s g_j G_j,\quad\gcd(\vdeg c,\vdeg e,g_1,\dots,g_s)=1.
\]
The condition on the greatest common divisor implies that the covering $\pi_F:\fd\to F$, restriction of $\pi$ on the 
generic fiber $F$, is connected, where $\fd:=\pi^{-1}(F)$ is a smooth projective curve. Consider the following diagram:
\[
\begin{tikzcd}
Y\ar[r]\ar[d,"\tilde{\pi}"]&X\ar[d,"\pi"]\\
G\times\PP^1\ar[r,"\tau_2"]&S
\end{tikzcd}
\]
The covering $\tilde{\pi}$ is associated to a horizontal divisor of $G\times\PP^1$ where the sequence of ramification 
indices is $c$ (at each point in $\tau_2^{-1}(F\cap C)$), $e$ (at each point in $\tau_2^{-1}(F\cap E)$), and $g_i$ 
(at each point in $\tau_2^{-1}(F\cap G_i$)), $i=1,...,s$. 
Actually $Y=G\times \fd$ and $\tilde{\pi}=1_G\times\pi_F$. As a consequence, $X$ is the quotient of  $G\times \fd$ 
and its cohomology can be computed from the formulas in this work. The action on $G\times \fd$ is not free,
but $X$ admits structures of isotrivial fibrations over orbifolds.

\section{L\^e-Yomdin singularities}\label{sec:yomdin}

A useful application of cyclic covers of reducible normal fake quadrics is given for the semistable reduction 
of (weighted) L{\^e}-Yomdin surface singularities introduced in \cite{ABLM-milnor-number}. 
Let $(V,0):= \{F = 0\}\subset (\CC^3,0)$ be a hypersurface singularity where $F := f_m + f_{m+k} + ...$ is the 
weighted homogeneous decomposition of the analytic germ 
$F\in\CC\{x,y,z\}$ for some weights $\theta=(\theta_x,\theta_y,\theta_z)$. We say $(V,0)$ is a 
\emph{$(\theta, k)$-weighted L\^e-Yomdin singularity} if 
\[
V(\Jac(f_m)) \cap V(f_{m+k}) = \emptyset,
\]
where $V(\Jac(f_m)), V(f_{m+k})\subset\PP^2_\theta$ are algebraic varieties in the weighted projective plane 
$\PP^2_\theta=\Proj \CC[x,y,z]_\theta$, $\CC[x,y,z]_\theta$ is the $\theta$-graded polynomial algebra over $\CC$, 
and $\theta=(\deg x,\deg y,\deg z)=(\theta_x,\theta_y,\theta_z)$. %

The term L\^e-Yomdin singularities comes from the original papers by Yomdin~\cite{Yomdin74} and L\^e~\cite{Le80} on hypersurface singularities 
(see~\cite{ALM06} for more details). 
The Milnor fiber of a $(\theta, k)$-weighted L\^e-Yomdin singularity can be studied by means of Steenbrink's 
spectral sequence~\cite{Steenbrink77} associated with the semistable
reduction \cite{Mumford-topology,Steenbrink77,jorge-Semistable} of a $\mathbb{Q}$-resolution of singularities of 
$(V,0)$, see \cite{jorge-Israel} for the non-weighted case. 

This resolution is obtained with two types of blow-ups. The first one is the $\theta$-weighted blow-up of $(\CC^3,0)$.
This first exceptional divisor and its cyclic cover in the semistable reduction deserve special treatment and they are
completely determined by the projectivized tangent cone $V(f_m)$ as a curve in the resulting exceptional divisor, which 
is the weighted homogeneous plane $\PP^2_\theta$, see \cite[Proposition 5.13]{ACM21}. 

The other blow-ups are those required for the minimal $\mathbb{Q}$-embedded resolution of the pair $(\PP^2_\theta,V(f_m))$;
each weighted blow-up of this embedded resolution yields a weighted blow-up of the surface singularity as explained
in~\cite[\S6.2]{jorge-Semistable}. Each one of these blow-ups contributes with an exceptional divisor which produces 
(as cyclic covers) certain divisors in the semistable reduction. Most of the invariants appearing in the Steenbrink's 
spectral sequence are birational invariants. In particular, instead of studying the cyclic covers at the end of the 
resolution, one can study them at this first stage, where they are cyclic covers of quotients of weighted projective planes 
with a very precise ramification locus.

For the particular case of superisolated singularities, the strategy in~\cite{ea:mams} includes performing an additional
birational transformation such that the final result is a cyclic cover of $\PP^1\times\PP^1$ ramified along some fibers 
and sections. The parallel strategy for weighted L\^e-Yomdin singularities replaces $\PP^1\times\PP^1$ by a reducible 
normal fake quadric, the fibers by regular and exceptional fibers, and the sections by curves $C,E,G$, see~\cite{jorge:Yomdin}.
In the upcoming sections computations will be carried out for the particular case of the second type of weighted blowing-ups, 
namely where the blown-up point~$P$ is smooth in the partial resolution and where the weights $(\p,\q,\r)$ of $\omega$ 
are pairwise coprime.
Following~\cite[\S6.2]{jorge-Semistable}, an embedded $\QQ$-resolution of $V(f_m)$ will be described first.

\subsection{Semistable normalization in dimension 2}
\label{sec:SNdim2}
\mbox{}

In this section, one step of the $\QQ$-resolution of a point $V(f_m)$ will be described, namely, a $(\p,\q)$-blow-up 
of a smooth point $P_0$ in a $2$-dimensional chart where the local equation of the total transform of the singularity is 
\[
x^{k\mq_x} y^{k\mq_y} h(x,y)=0;
\] 
the $(\p,\q)$-multiplicity of $h(x,y)$ is denoted by $\pmult$. The condition on the exponents of 
$x,y$ is given by the smoothness in dimension~$3$. The $\nu$-form of $h(x,y)$ is 
\begin{equation}\label{eq:nu2}
\left(x^{a_x} y^{a_y}\prod_{i=1}^{r}(y^\p-\gamma_i x^\q)^{\xp_i}\right)^s,
\end{equation}
where  $\gamma_1,\dots,\gamma_{r}\in\CC^*$ are pairwise distinct, $\vdeg:=\frac{k}{s}\in\ZZ$, and 
$\gcd(\vdeg,a_x,a_y,e_1,\dots,e_{r})=1$. We denote $\xp:=\sum_{i=1}^{r} \xp_i$ and 
$\nu_0:= \p a_x+\q a_y+ \p \q \xp=\frac{\nu}{s}$. 
Moreover, this is related to the third weight of $\omega$ by $\r=\frac{\nu_0}{\vdeg}=\frac{\nu}{k}\in\ZZ$.

\begin{figure}[ht]
\begin{tikzpicture}
\coordinate (P) at (-2,0);
\coordinate (Q) at (2,0);

\draw ($1.5*(P)-.5*(Q)$) node[left] {$x_1=0$} --  ($1.5*(Q)-.5*(P)$) node[right] {$y_2=0$}  ;
\fill (P) circle [radius=.1cm];
\fill (Q) circle [radius=.1cm];
\draw (-1.25,1) to[out=-60,in=90] (-1,0) node[below] {$s \xp_1$}  to[out=90,in=-120] (-.75,1);
\draw[xshift=2cm] (-1.25,1) to[out=-60,in=90] (-1,0) node[below] {$s \xp_r$}  to[out=90,in=-120] (-.75,1);
\node at (0,.5) {$\dots$};
\node[below left] at (P) {$\frac{1}{\p}(1,-\q)$};
\node[below right] at (Q) {$\frac{1}{\q}(-\p,1)$};

\draw[rotate around={45:(-2,0)}, xshift=-1cm, ] (-1.25,1) node[above] {$s a_y$} to[out=-60,in=90] (-1,0)   to[out=90,in=-120] (-.75,1);
\draw[rotate around={-45:(2,0)}, xshift=3cm, ] (-1.25,1) node[right] {$s a_x$} to[out=-60,in=90] (-1,0)   to[out=90,in=-120] (-.75,1);

\draw ($(P)-.75*(0,1)$) node[below] {$y_1=0$} --  ($(P)+1*(0,1)$) node[above] {$k\mq_y$};
\draw ($(Q)-.75*(0,1)$) node[below] {$x_2=0$} --  ($(Q)+1*(0,1)$) node[above] {$k\mq_x$};
\node at ($(P)+1*(-4,1.5)$) {$[(x_1,y_1)]\mapsto(x_1^\p,x_1^\q y)$};
\node at ($(Q)+1*(2.5,1.5)$) {$[(x_2,y_2)]\mapsto(x_2 y_2^\p,y_2^\q)$};
\end{tikzpicture}
\caption{Weighted $(\p,\q)$ blow-up at $P_0$.}
\end{figure}
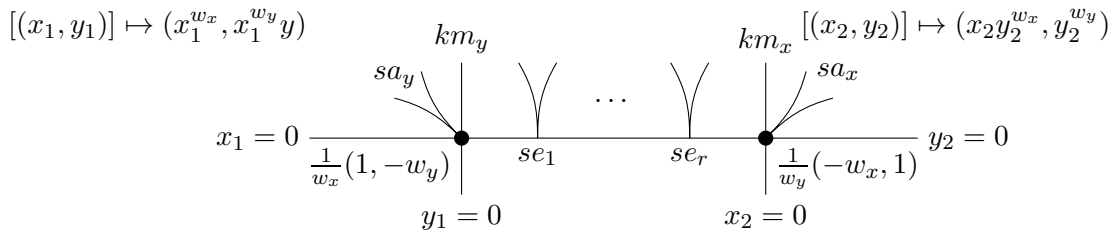

The multiplicity of the exceptional divisor is
\[
k(\p\mq_x+\q\mq_y)+\nu=k\overbrace{(\p\mq_x+\q\mq_y+\r)}^{\newmult}.
\]
Let $u,v \in \ZZ$ such that $u \p +v \q=1$. 
Let us consider the weighted blow-up of type $(N\p-v,1)$ at the quotient singularity of order $\p$, where $N\gg0$. 
The multiplicity of this auxiliar exceptional divisor ($\bmod{\,k\newmult}$) is 
\begin{gather}\nonumber
\frac{(N\p-v)k\newmult+k\mq_y+s a_y}{\p}\equiv\frac{k\mq_y+s a_y-kv(\p\mq_x+\q\mq_y+\r)}{\p}\equiv\\
\label{eq:minfty}
k\underbrace{(u\mq_y-v\mq_x)}_{-\mdet}+s\frac{a_y-v(\p a_x+\q a_y+ \p \q \xp)}{\p}\equiv
s(\underbrace{-\vdeg\mdet+u a_y-v a_x-v\q\xp}_{\xp_\infty}).
\end{gather}
We proceed in a similar way with a weighted blow-up of type $(1,N\q-u)$ at the quotient singularity of order $\q$. 
The multiplicity of this auxiliar exceptional divisor ($\bmod{\,k\newmult}$) is 
\begin{gather}\label{eq:m0}
s(\underbrace{\vdeg\mdet+v a_x-u a_y-u\p\xp}_{\xp_0}).
\end{gather}
Hence the $\gcd$ of the multiplicity at each new divisor and its neighboring divisors 
(which can be replaced by the multiplicities computed above) is
\begin{equation}\label{eq:mcd}
s\gcd(\vdeg\newmult,\xp_0,\xp_\infty,\xp_1,\dots,\xp_r)=s\underbrace{\gcd(\newmult,\xp_0,\xp_\infty,\xp_1,\dots,\xp_r)}_{\delta_\omega}.
\end{equation}
Note that $\gcd(\mcd,\vdeg)=1$ and there exist $\bzt_1,\bzt_2\in\ZZ$ such that $\bzt_1\mcd+\bzt_2\vdeg=1$.

Let us summarize the notation and some formulas introduced in this section.

\begin{enumerate}[label=($\mathcal{C}$\arabic{enumi})]
\item\label{c1} $\kappa=\frac{k}{s}$ with the condition $\gcd(\vdeg,a_x,a_y,\xp_1,\dots,\xp_r)=1$.
\item $\xp=\xp_1+\dots+\xp_r$.
\item $\nu_0=\p a_x+\q a_y+ \p \q \xp=\frac{\nu}{s}$.
\item\label{c4} $\r=\frac{\nu_0}{\vdeg}=\frac{\nu}{k}$.
\item\label{c5} $\newmult=\p\mq_x+\q\mq_y+\r$.
\item\label{c6} $u \p +v \q=1$.
\item\label{c7} $\xp_0\ =v a_x-u a_y-u\p\xp+\vdeg c$.
\item\label{c8} $\xp_\infty=u a_y-v a_x-v\q\xp-\vdeg c$.
\item\label{c9} $\delta_\omega=\gcd(\newmult,\xp_0,\xp_\infty,\xp_1,\dots,\xp_r)$.
\item\label{c10} $\bzt_1\mcd+\bzt_2\vdeg=1$.
\end{enumerate}

\subsection{Semistable normalization in dimension 3}
\label{sec:SNdim3}
\mbox{}

In this section we use the blow-up performed in~\S\ref{sec:SNdim2} at the embedded surface singularity in dimension 3.
In particular, consider a chart where $P$ is the origin and the local equation of the total transform of the 
surface singularity~$V$ is of the form:
\[
(x^{\mq_x} y^{\mq_y})^{m+k} z^m (z^k-h(x,y))=0;
\]
The $\nu$-form of $z^k-h(x,y)$ is
\[
\prod_{j=1}^{s}
\left(z^\vdeg -\zeta_s^j x^{a_x} y^{a_y}\prod_{i=1}^{r}(y^\p-\gamma_i x^\q)^{\xp_i}\right)=
z^k -x^{s a_x} y^{s a_y}\prod_{i=1}^{r}(y^\p-\gamma_i x^\q)^{s \xp_i}.
\]
The exceptional component of the blowing-up is isomorphic to $\PP^2_\omega$. 
The multiplicity of this divisor equals
\begin{equation}
\label{eq:degd}
d:=(\p \mq_x+\q \mq_y+\r) \overbrace{(\mq+ k)}^{\mk}=\newmult\mk,
\end{equation}
where $\newmult$ has been introduced in \ref{c5}.
Since the total transform $V'$ of $V$ is linearly equivalent to the zero divisor,
this is also the case for its intersection  with $E\cong\PP^2_\omega$.
This intersection can be decomposed into two terms:
the divisor obtained by the intersection of $V'$ with $E$ and with the components of $V'$ 
which are different from $E$. The latter can be described as
\begin{equation*}
D:=\mk(\mq_x X+\mq_y Y)+ \mq Z+\sum_{j=1}^s G_j,
\end{equation*}
where
\begin{equation}
\label{eq:eqsLY1}
X:x=0,\quad Y: y=0,\quad Z:z=0, \quad G_j:z^\vdeg -\zeta_j x^{a_x} y^{a_y}\prod_{i=1}^{r}(y^\p-\gamma_i x^\q)^{\xp_i}=0,
\end{equation}
and the divisors $G_j$ are irreducible. 
The former, that is, $E^2$ is linearly equivalent to $H$ where $H$ is any divisor in $\PP^2_\omega$ 
of degree~$1$, e.g. $H\sim u X + v Y$ as $\deg_\omega H=1$, see~\ref{c6}. Hence the intersection is $D-dH\sim 0$, the support of 
a meromorphic function. From the point of view of a cyclic cover of degree~$D$ the geometric ramification occurs 
$\bmod{d}$ and this is why we have provided the decomposition $D-dH$.
From these data one can choose a particular presentation of $\cl(\PP^2_\omega)$ as
\begin{equation}
\label{eq:clP2w}
\langle
X,Y,Z,F_1,\dots,F_{r},F\mid
\r X\!\sim\!\p Z, \r Y\!\sim\!\q Z, \q X\!\sim\!\p Y\!\sim\!F\!\sim\!F_i, 1\leq i\leq r
\rangle
\end{equation}
where $G_1\sim\dots\sim G_s\sim \vdeg  Z$, $F_i$ is the \emph{line}
joining $[0:0:1]_\omega$ and $[1:\gamma_i^{\frac{1}{\p}}:0]_\omega$,
and $F$ is a generic \emph{line} through~$[0:0:1]_\omega$.

Hence, it makes sense to consider the cyclic covering ramified along $(d,D,H)$. Equivalently, using
Reduction~\ref{red:horizontal}, one can consider the covering associated with $(d,D',0)$ where
\[
D'=\mk(\mq_x-u\newmult) X+ \mk(\mq_y-v\newmult) Y + \mq Z+\sum_{j=1}^s G_j.
\]
In order to express this cover as that of a $\QQ$-normal crossing divisor, one needs to perform some 
birational transformations. This is where the reducible normal fake quadrics come into play.
Let us start with the $(\p,\q)$-blow-up of~$[0:0:1]_\omega$. The new surface will
be denoted by $\Sigma$ and the exceptional component of this blow-up by~$E$. 
For simplicity, we keep the notation for the strict transforms in $\Sigma$. 
Taking into account that the total transform of $X$ (resp. $Y$) in $\Sigma$ is $X+\frac{\p}{\r} E$
(resp. $Y+\frac{\q}{\r} E$),
it is not hard to check that 
\[
\cl(\Sigma)=\langle
X,Y,Z,F_1,\dots,F_{r},F,E\mid
\r(u X\!+\!v Y)\!=\! Z-E, 
\q X=\p Y\!=\!F_i\!=\!F, 1\leq i\leq r
\rangle.
\]
The multiplicity of $E$ as a component of the total transform of $D'$ is
\[
\mk\frac{(\mq_x-u\newmult) \p + (\mq_y-v\newmult) \q}{\r}=-\mk,
\]
see \ref{c4}
and \ref{c5} in page~\pageref{c4}. 
By the previous calculations, note that the new ramification divisor $D_\Sigma$ can be described as
\[
D_\Sigma=\mk(\mq_x-u\newmult) X+ \mk(\mq_y-v\newmult) Y-\mk E + m Z+\sum_{j=1}^s G_j.
\]
Next, one can perform generalized Nagata transformations at the fibers over the following points of 
$E\equiv \PP^1$: $0\equiv[0:1],\infty\equiv[1:0],\gamma_i\equiv[\gamma_i:1]$, $i=1,\dots,r$. The goal of
these transformations is to separate the divisors $G_j$ from $Z$ keeping them away from $E$, which can be done
using the following weights:
\begin{equation}
\label{eq:indicesE}
\frac{1}{\gcd(\vdeg ,a_x)}\left(\vdeg ,a_x\right) \textrm{at } 0,
\frac{1}{\gcd(\vdeg ,a_y)}\left(\vdeg ,a_y\right) \textrm{at } \infty,\
\frac{1}{\gcd(\vdeg ,\xp_i)}\left(\vdeg ,\xp_i\right) \textrm{at } \gamma_i.
\end{equation}
Moreover we obtain a reducible normal fake quadric $S$ as in section~\ref{sec:alternative} as one can
check by computing the self-intersection of the strict transform of $Z$ in~$S$:
\begin{gather}\label{eq:self}
(Z\cdot Z)_S=(Z\cdot Z)_\Sigma-\sum_{i=1}^r\frac{\xp_i}{\vdeg}-\frac{a_x}{\vdeg\q}-\frac{a_y}{\vdeg\p}=
\frac{\r}{\p\q}-\frac{a_x\p+a_y\q+\p\q\xp}{\vdeg\p\q}=0.
\end{gather}
To obtain a full description of $S$, let us describe its cyclic quotient singular points.
If the new exceptional divisors are denoted by $A_0,A_\infty,A_1,\dots,A_{r}$, then there are two singular points 
on each $A_i$, $i\in I=\{0,1,...,r,\infty\}$ whose combinatorial data $(d_i,q_i)$, as described in section~\ref{subsec:S},
can be calculated using~\cite[Thm.~4.3]{AMO-Intersection} in terms of the invariants introduced in \eqref{eq:nu2}, \eqref{eq:minfty}, and \eqref{eq:m0} as,
see also \ref{c1}, \ref{c7} and \ref{c8}:
\begin{equation}
\label{eq:di1}
d_i=
\frac{\vdeg }{\gcd(\vdeg ,\xp_i)},\qquad q_i=\frac{\xp_i}{\gcd(\vdeg ,\xp_i)},\ i\in I.
\end{equation}
Note that
\begin{equation}
\label{eq:lcmdi}
\lcm_{i\in I}d_i=\frac{\vdeg }{\gcd(\vdeg ,\xp_i,i\in I)}=\vdeg.
\end{equation}
The first equality in~\eqref{eq:lcmdi} is a consequence of~\eqref{eq:di1} and \eqref{eq:gcd-lcm}
from the appendix. 
For the second equality one needs to show 
\begin{equation}
\label{eq:kei}
\gcd(\vdeg ,\xp_i,i\in I)=1.
\end{equation}
Since $\ZZ=\ZZ\langle\vdeg,a_x,a_y,\xp_1,\dots,\xp_r\rangle$, 
it is enough to show that $a_x,a_y\in \ZZ\langle\vdeg,\xp_i,i\in I\rangle$, see~\ref{c1}.
This is a consequence of 
\[
\begin{pmatrix}
v&-u\\
\p&\q
\end{pmatrix}
\begin{pmatrix}
a_x\\
a_y
\end{pmatrix}
\equiv\!
\begin{pmatrix}
0\\
0
\end{pmatrix}\!
\bmod \ZZ\langle\vdeg,\xp_i,i\in I\rangle
\text{ and }
\left|
\begin{matrix}
v&-u\\
\p&\q
\end{matrix}
\right|
=1
.
\]
As a word of caution, note that we are not imposing that the values $q_i$ be in $\{1,\dots,d_i-1\}$
(see also Remark~\ref{rem:dif_dq}), only $\gcd(d_i,q_i)=1$. There could be other choices of $q_i$ 
corresponding with different generalized Nagata transformations, their remainder classes $\bmod{\ d_i}$ 
however remain unchanged. 
According to our construction and~\eqref{eq:self}, one can check that $\alpha=\sum_{i\in I}\frac{q_i}{d_i}=0$.

Finally, let us compute the total transform $D_S$ of $D_\Sigma$ in $S$. 
First, the multiplicity of $A_i$, $i=1,\dots,{r}$ as a component of $D_S$ is given by:
\begin{equation}
\label{eq:coeffsAi1}
m \frac{\xp_i}{\gcd(\vdeg ,\xp_i)}+s\frac{\xp_i \vdeg }{\gcd(\vdeg ,\xp_i)}=\mk q_i.
\end{equation}
On the other hand, the multiplicity of $A_0$ as a component of $D_S$ is:
\begin{gather}
\nonumber
\mk\frac{a_x+\vdeg (\mq_x -u\newmult)}{\q{\gcd(\vdeg ,\xp_0)}}=
\mk\frac{a_x+\vdeg \mq_x(1 - u \p)-u\vdeg(\mq_y\q+\r)}{\q{\gcd(\vdeg ,\xp_0)}}=\\
\mk\frac{a_x+\vdeg \mq_x v\q-u\vdeg \mq_y\q-u(\p a_x+\q a_y +\p\q\xp)}{\q{\gcd(\vdeg ,\xp_0)}}=
\mk\frac{a_x v+\vdeg\mdet-u(a_y +\p\xp)}{\gcd(\vdeg ,\xp_0)}=\mk q_0.%
\label{eq:coeffsA01}
\end{gather}
Analogously, the multiplicity of $A_\infty$ is $\mk%
q_\infty%
$.
Then, the ramification divisor $D_S$ is
\begin{equation*}
D_S=\mk\sum_{i\in I} q_i A_i+ m Z-\mk E+\sum_{j=1}^s G_j.
\end{equation*}
Finally note that
\begin{equation}\label{eq:clase_S}
\cl(S)=
\langle
A_0,A_\infty,A_1,\dots,A_{r},F,Z,E \left| \right. d_i A_i\sim F, i\in I, E-Z\sim\sum_{i\in I} q_i A_i
\rangle.
\end{equation}
The last step is to apply the contents of section~\ref{sec:general-coverings} to the covering
of $S$ associated with $(d,D_S,0)$. Following the constructive proof of Theorem~\ref{thm:gencase} and its 
notation, the subset of indices~$J_1$ corresponds with the irreducible divisors $Z,E,G_1,\dots,G_s$ and 
then $\mu_1=1$ and also $\mu_0=1$. Then, there is no contribution from the vertical part~\ref{casoa} and
from the mixed part~\ref{casoc}. Let us check the horizontal part~\ref{casob}. 
The irreducible components of $D_S$ involved in $J_2$ are $A_i$, $i\in I$, and hence, using \eqref{eq:gcd-gcd},
one obtains
\begin{gather*}
\gcd(\mk\newmult, \mk q_i, i\in I)\!=
\mk\gcd\left(\newmult, \frac{\xp_i}{\gcd(\vdeg ,\xp_i)}, i\in I\right)=
\mk\gcd\left(\newmult, \frac{\gcd(\xp_i, i\in I)}{\gcd(\vdeg ,\xp_i, i\in I)}\right)=
\mk\mcd.
\end{gather*}
The last equality is a consequence of~\eqref{eq:m0} and~\eqref{eq:kei}, see also \ref{c9}. Recall that $d=\mk\newmult$ by~\eqref{eq:degd}
and $\mcd | \newmult$ by~\eqref{eq:mcd}. Thus the monodromy of the original $d$-cover coincides with the 
monodromy of a horizontal $(\mk\mcd)$-cover associated with $(\mk\mcd,D_h,H_h)$, where, see~\ref{c10},
\begin{gather*}
D_h=m Z-\mk E+\sum_{j=1}^s G_j,\quad
H_h=-\frac{1}{\mcd} \sum_{i\in I} q_i A_i\sim -\bzt_1\sum_{i\in I} q_i A_i\sim \bzt_1(Z-E)
\end{gather*}
(one can easily check that the relation $D_h-\mk\mcd H_h\sim 0$ holds).

Finally, after applying the process described in section~\ref{sec:horizontal-coverings}, 
the horizontal cover satisfies $n=1$ and~$\t=\mk\mcd$. 

As a consequence of Theorem~\ref{thm:gencase} and the discussion above, one has the following.

\begin{prop}\label{prop:yomdin}
Under the conditions and notation described above, the invariant subspaces of the monodromy action on the 
semistable reduction $S$ of~\eqref{eq:eqsLY1} coincide with the \emph{greatest common $(\mk\mcd)$-covering} of 
the restrictions to the special fibers $A_i$, $i\in I$.
\end{prop}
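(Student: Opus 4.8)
The plan is to feed the triple $(d,D_S,0)$, with $d=\newmult\mk$ and
\[
D_S=\mk\sum_{i\in I} q_i A_i+ m Z-\mk E+\sum_{j=1}^s G_j,
\]
directly into Theorem~\ref{thm:gencase} and to read off its eigenspace decomposition from the numerical invariants already assembled above. Here $Z$ plays the role of the section $C$ of Lemma~\ref{lemma:S}, $E$ is the opposite section, the curves $G_j\sim\vdeg Z$ are horizontal, and the $A_i$, $i\in I$, are the special fibers. Thus the decomposition~\eqref{eq:div_dec} of $D_S$ has horizontal part $mZ-\mk E+\sum_{j=1}^s G_j$, vertical part $\mk\sum_{i\in I}q_iA_i$, and empty slanted part, and it only remains to track which of the summands $\Hh,\Hv$ (and the internal mixed summand of Proposition~\ref{prop:horizontal}) survive.

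The first step is to kill the vertical and mixed contributions. Since each $G_j$ appears in the horizontal part with coefficient~$1$, the primitive horizontal multiplicity $\mcdhv_h$ equals~$1$, whence $\mu_1=d^v=\gcd(d,\mcdhv_h)=1$ and $\mu_0=\gcd(d,\mcdhv_h,\mcdhv_v)=1$. By case~\ref{casoa} of Theorem~\ref{thm:gencase} the intermediate $d^v$-cover is the identity, so $\Hv=H^1(\PP^1,\cO_{\PP^1})=0$; by case~\ref{casoc}, equivalently $n=1$, the mixed factor $\Delta_{2,m}=\Delta_0$ of the horizontal decomposition is trivial as well. Hence $H^1(S_d,\cO_{S_d})$ reduces to the purely horizontal piece $\Hh$ coming from case~\ref{casob}.

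The second step is to compute the degree of that horizontal cover. The only vertical components of $D_S$ are the $A_i$, carrying coefficients $\mk q_i$, so, recalling $d=\newmult\mk$ from~\eqref{eq:degd},
\[
d^h=\gcd\!\left(d,\ \mk q_i:i\in I\right)=\mk\,\gcd\!\left(\newmult,\ q_i:i\in I\right).
\]
Substituting $q_i=\xp_i/\gcd(\vdeg,\xp_i)$ and applying the gcd-of-gcd identity~\eqref{eq:gcd-gcd} of the appendix together with the coprimality~\eqref{eq:kei}, $\gcd(\vdeg,\xp_i:i\in I)=1$, collapses the inner gcd to $\gcd(\newmult,\xp_i:i\in I)=\mcd$; by~\eqref{eq:mcd} this is exactly $\mcd$, so $d^h=\mk\mcd$. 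With $\mcdhv_h=1$ forcing the internal mixed summand to vanish ($n=1$), Proposition~\ref{prop:horizontal} identifies the horizontal $d^h$-cover with its greatest common vertical cover, of degree $\t=\mk\mcd=e^h$; by Definition~\ref{def:gcd-cover} this cover is assembled from the restrictions $\pi_{A_i,e_i}:\ai{e_i}\to A_i$, i.e.~it is the greatest common $(\mk\mcd)$-covering of the special fibers. Combining the two steps yields $H^1(S_d,\cO_{S_d})=\Hh=H^1(X_{e^h},\cO_{X_{e^h}})$ and the claimed identification of invariant subspaces.

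The genuinely nontrivial point, and the step I expect to be the main obstacle, is the arithmetic reduction $\gcd(\newmult,q_i:i\in I)=\mcd$. It relies on the whole chain of multiplicity computations of \S\ref{sec:SNdim2}, and in particular on~\eqref{eq:kei}, which in turn follows from $\gcd(\vdeg,a_x,a_y,\xp_1,\dots,\xp_r)=1$ via the unimodular relation $v\q+u\p=1$ applied to $(a_x,a_y)$. Once this identity and the vanishings $\mu_1=\mu_0=1$ are in hand, the remainder is the routine matching of the general decomposition of Theorem~\ref{thm:gencase} to the concrete divisor $D_S$ together with an appeal to the already-established vanishing results.
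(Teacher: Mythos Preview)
Your approach is exactly the paper's: feed $(d,D_S,0)$ into Theorem~\ref{thm:gencase}, kill the vertical and mixed parts via $\mu_1=\mu_0=1$ (because the $G_j$ carry coefficient~$1$), and reduce to the horizontal $d^h$-cover with $d^h=\mk\mcd$ through the gcd identity~\eqref{eq:gcd-gcd} and~\eqref{eq:kei}. The structure and the key arithmetic step are correct.

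There is one point you slide over. You assert that the greatest common vertical cover of the horizontal $d^h$-cover has degree $\t=\mk\mcd$, i.e.\ $e^h=d^h$. In the language of Proposition~\ref{prop:horizontal} this is the statement $d_\t=1$, equivalently that the torsion class $T'$ attached to $H_h$ vanishes. This is \emph{not} automatic from $n=1$ (which only kills the internal mixed summand $\Hv$); one still has to check that $H_h$ lands in $\cl_H(S)$. The paper does this in one line: from case~\ref{casob} of Theorem~\ref{thm:gencase} with $H=0$ and $D_s=0$ one gets
\[
H_h\sim -\frac{1}{\mcd}\sum_{i\in I}q_iA_i\sim -\bzt_1\sum_{i\in I}q_iA_i\sim\bzt_1(Z-E),
\]
using $\mcd\mid q_i$ (which follows from $\gcd(\mcd,\vdeg)=1$), the relation $\sum_{i\in I}q_iA_i\sim E-Z$ in~\eqref{eq:clase_S}, and~\ref{c10}. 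Once $H_h\in\langle Z,E\rangle$, Lemma~\ref{lem:Hhorizontal} gives $T'\sim 0$, hence $\t=d^h=\mk\mcd$. You should insert this computation; without it your claim $\t=\mk\mcd=e^h$ is an assertion rather than a conclusion.
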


Let us reprove the above result without the use of the torsion arguments. Let us consider the restriction 
$\pi_F:F_{\mk\mcd}\to F$ of the $(\mk\mcd)$-subcover to a general fiber~$F$. The ramification happens 
at $Z\cap F$ (with multiplicity~$m$), $E\cap F$ (with multiplicity~$-\mk$), and $G_j\cap F$ ($\kappa$ points with multiplicity~$1$). The following congruences $\bmod{(\mk\mcd)}$ hold:
\begin{equation}\label{mhat}
m\equiv m-\mk\mcd\bzt_1\equiv m- \mk(1-\bzt_2\vdeg)\equiv\vdeg\overbrace{(\bzt_2\mk-s)}^{\hat{m}}.
\end{equation}
Then, there is a pull-back diagram
\begin{equation}\label{pull-back-F-2}
\begin{tikzcd}
\fd\ar[d,"\pi_F"]\ar[r]&X\ar[d,"\pi_X"]\\
F\ar[r]&\PP^1\\[-25pt]
z\ar[r,mapsto]&z^\vdeg
\end{tikzcd}
\end{equation}
where the covering $\pi_X$ has ramification indices $\hat{m}$ (at~$0$), $1$ (at $s$ points in~$\CC^*$), 
and $-(s+\hat{m})$ (at~$\infty$).

A similar diagram exists if we replace $F$ by $A_i$ and $\vdeg$ by $\frac{\vdeg}{d_i}$, for $i\in I$.
In order to see this, one needs to know the multiplicity of the ramification divisor at $Z\cap A_i$, 
which is obtained by performing a $(q_i',1)$-blowing-up as in the proof of Proposition~\ref{prop:purehorizontal}, 
i.e $q_i'q_i=h_i d_i-1$. The computed multiplicity is 
\[
\frac{\mk q_i q_i'+m}{d_i}=\mk h_i-s\frac{\vdeg}{d-i}.
\]
Also, one need to check the following congruence $\bmod{\mk\mcd}$
\[
\frac{\vdeg}{d_i}{(\bzt_2\mk-s)}\equiv\mk h_i-s\frac{\vdeg}{d_i}\Longleftrightarrow
\frac{\vdeg}{d_i}\bzt_2\mk\equiv\mk h_i,
\]
which is equivalent to $\frac{\vdeg}{d_i}\bzt_2\equiv h_i\bmod{\mcd}$. 
Since $\gcd(d_i,q_i)=1$ and $\mcd$ is a divisor of~$q_i$ the above congruence is equivalent to 
$1-\bzt_1\mcd=\vdeg\bzt_2\equiv d_i h_i\equiv 1+q_i q_i'\bmod{\mcd}$ which is obviously true. 
Then the following diagram also holds:
\begin{equation}\label{pull-back-Ai-2}
\begin{tikzcd}
\ai{\mk\mcd}\ar[d,"\pi_{A_i}"]\ar[r]&X\ar[d,"\pi_X"]\\
A_i\ar[r]&\PP^1\\[-25pt]
z\ar[r,mapsto]&z^{\frac{\vdeg}{d_i}}.
\end{tikzcd}
\end{equation}
This provides an alternative proof of the result since it means that $\mk\mcd$ is the maximal degree where the 
diagrams~\eqref{pull-back-F-2} hold. This describes the curve cover that contains all the necessary information.

\begin{cor}
The characteristic polynomial of the monodromy action on the 
semistable reduction $S$ of~\eqref{eq:eqsLY1} is
\[
\frac{(t-1)^{2-s}(t^{\mk\mcd}-1)^s}{(t^{\mk}-1)(t^{\gcd(m,s\mcd)}-1)}
\]
\end{cor}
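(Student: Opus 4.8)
The plan is to reduce the entire computation to the single curve cover $\pi_X:X\to\PP^1$ isolated in the preceding discussion and then feed its ramification data into the general formula~\eqref{eq:acampo}. By Proposition~\ref{prop:yomdin} together with the pull-back diagram~\eqref{pull-back-Ai-2}, the monodromy on $H^1$ of the semistable reduction is entirely carried by the greatest common $(\mk\mcd)$-cover $\pi_X$; indeed, in this situation $\mu_1=\mu_0=1$, so by Corollary~\ref{cor-Alex-general} the vertical and mixed factors are trivial and the full characteristic polynomial $\Delta(t)$ coincides with that of $\pi_X$. Recall that $\pi_X$ is a $\mk\mcd$-cyclic cover of $\PP^1$ branched at $s+2$ points, with local multiplicities $\hat{m}$ at $0$, a single $1$ at each of $s$ points of $\CC^*$, and $-(s+\hat{m})$ at $\infty$ (note $\hat{m}+s\cdot 1-(s+\hat{m})=0$, consistent with $\deg H=0$).

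First I would apply~\eqref{eq:acampo} to these $s+2$ ramification points. Since one of the multiplicities is $1$, the invariant $n=\gcd(\mk\mcd,\hat{m},s+\hat{m},1,\dots,1)$ equals $1$, so the numerator contributes $(t-1)^2(t^{\mk\mcd}-1)^{s}$ (the exponent being $(s+2)-2=s$). In the denominator the $s$ unit multiplicities each yield a factor $t-1$, which cancels $(t-1)^{-s}$ and leaves
\[
\Delta(t)=\frac{(t-1)^{2-s}(t^{\mk\mcd}-1)^{s}}{\bigl(t^{\gcd(\mk\mcd,\hat{m})}-1\bigr)\bigl(t^{\gcd(\mk\mcd,\,s+\hat{m})}-1\bigr)}.
\]
It then remains only to match the two nontrivial exponents with $\mk$ and $\gcd(m,s\mcd)$.

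For the first exponent, from~\eqref{mhat} one has $s+\hat{m}=\bzt_2\mk$, while~\ref{c10} gives $\bzt_1\mcd+\bzt_2\vdeg=1$ and hence $\gcd(\bzt_2,\mcd)=1$; therefore $\gcd(\mk\mcd,s+\hat{m})=\mk\gcd(\mcd,\bzt_2)=\mk$. The delicate point, and the main obstacle, is the second exponent, for which the key algebraic identity I would establish is
\[
\hat{m}=\bzt_2\,m-\bzt_1\,s\mcd ,
\]
obtained by substituting $\mk=m+\vdeg s$ (since $\vdeg=k/s$ by~\ref{c1}) into $\hat{m}=\bzt_2\mk-s$ and then replacing $\bzt_2\vdeg-1=-\bzt_1\mcd$ from~\ref{c10}. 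Setting $h:=\gcd(m,s\mcd)$, this identity gives $h\mid\hat{m}$, while $\mk\mcd=m\mcd+\vdeg s\mcd$ gives $h\mid\mk\mcd$, so $h\mid\gcd(\mk\mcd,\hat{m})$. Conversely, writing $g:=\gcd(\mk\mcd,\hat{m})$, the congruence $m\equiv\vdeg\hat{m}\pmod{\mk\mcd}$ of~\eqref{mhat} yields $g\mid m$, and the identity $s\mcd=\bzt_2\mk\mcd-\hat{m}\mcd$ yields $g\mid s\mcd$, so $g\mid h$; hence $\gcd(\mk\mcd,\hat{m})=\gcd(m,s\mcd)$. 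The reason this step is the crux is that the naive estimate only shows $g\mid m$ and would spuriously leave a factor of $\vdeg$ inside the gcd; it is precisely the repackaging of the Bézout relation~\ref{c10} as $\hat{m}=\bzt_2 m-\bzt_1 s\mcd$ that makes the reverse divisibility transparent. Substituting both simplified exponents into the displayed expression produces the claimed characteristic polynomial.
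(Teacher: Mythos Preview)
Your proof is correct and follows essentially the same route as the paper: reduce to the $(\mk\mcd)$-cover $\pi_X$ of $\PP^1$ via Proposition~\ref{prop:yomdin}, apply the $\PP^1$-cover formula, and identify the two nontrivial gcd exponents. The only cosmetic difference is that the paper packages your divisibility argument for $\gcd(\mk\mcd,\hat m)=\gcd(m,s\mcd)$ as the single observation that $\begin{pmatrix}\hat m\\ \mk\mcd\end{pmatrix}=\begin{pmatrix}\bzt_2&-\bzt_1\\ \mcd&\vdeg\end{pmatrix}\begin{pmatrix}m\\ s\mcd\end{pmatrix}$ with a unimodular matrix, which yields the equality of gcds in one stroke.
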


\begin{proof}
From Proposition~\ref{prop:yomdin} we know that it coincides with the characteristic polynomial of 
$\pi_X$ from \eqref{pull-back-F-2}. We use a zeta-function argument. The covering admits 
a stratification with a dense strata of Euler characteristic $-s$ where each point has $\mk\mcd$
preimages (the unramified part). There are $s$ points with only one preimage. For the other $2$
remaining points the number of preimages are $\gcd(\mk\mcd,\hat{m})$ and 
$\gcd(\mk\mcd,-\hat{m}-s)=\gcd(\mk\mcd,\bzt_2\mk)=\mk$. Since 
$\hat{m}=\bzt_2\mk-s=m\bzt_2-\bzt_1\mcd s$ we have
\[
\begin{pmatrix}
\hat{m}\\
\mk\mcd
\end{pmatrix}=
\begin{pmatrix}
\bzt_2&-\bzt_1\\
\mcd&\vdeg
\end{pmatrix}
\begin{pmatrix}
m\\
s\mcd
\end{pmatrix},
\]
and as the square matrix is unimodular then 
$\gcd(\mk\mcd,\hat{m})=\gcd(s\mcd,m)$. Hence
\[
\frac{(t-1)^2}{\Delta_1(t)}
=
\frac{\Delta_2(t)\Delta_0(t)}{\Delta_1(t)}
=\zeta(t)=
(t-1)^s(t^\mk-1)(t^{\gcd(s\mcd,m)}-1) (t^{\mk\mcd}-1)^{-s}.
\qedhere
\]
\end{proof}

\appendix

\section{Basic arithmetic}

Some basic well-known properties of the floor and ceiling functions, the greatest common divisor, 
and the least common multiple are collected here for completeness. The purpose is to recall some of the 
most frequently used properties in this paper so the reader can be referred to them, simplifying the 
overall exposition.

\subsection{Floor and ceiling} \mbox{}

Given a real number $a\in\RR$, let us denote by $\lfloor a \rfloor$ its integral part (or floor)
and by $\{ a \}$ its decimal part so that one can write
\[
a = \lfloor a \rfloor + \{ a \},
\]
where $\lfloor a \rfloor$ is the unique integer satisfying $a-1<\lfloor a \rfloor \leq a$
and $0 \leq \{ a \} < 1$. The ceiling (or roundup) is denoted by $\lceil a \rceil$,
it satisfies $\lceil a \rceil = - \lfloor -a \rfloor$ and it is the unique integer
such that $a \leq \lceil a \rceil < a+1$.

Two more useful properties will be used throughout this paper. Let $a \in \RR$ and $m, n \in \ZZ$,
then
\begin{equation}
\label{eq:floorfloor}
\left\lfloor \frac{\lfloor a \rfloor}{n} \right\rfloor
= \left\lfloor \, \frac{a}{n} \, \right\rfloor
\quad \text{and} \quad
\left\lceil \frac{m+1}{n} \right\rceil
= \left\lfloor \frac{m}{n} \right\rfloor + 1,
\end{equation}
and, consequently, 
\begin{equation}
\label{eq:floor}
\left\lfloor \frac{-1-m}{n} \right\rfloor
= -1 - \left\lfloor \frac{m}{n} \right\rfloor.
\end{equation}

\subsection{Greatest common divisor and least common multiple} \mbox{}

Let $m_1,\ldots,m_r,k \in \ZZ$, then
\begin{equation}\label{eq:gcd-gcd}
\gcd\left( \frac{m_1}{\gcd(k,m_1)},\ldots,\frac{m_r}{\gcd(k,m_r)} \right)
= \frac{\gcd(m_1,\ldots,m_r)}{\gcd(k,m_1,\ldots,m_r)}.
\end{equation}
If, in addition, $m \in \ZZ$ is a multiple of all the $m_i$'s, then
\begin{equation}\label{eq:gcd-lcm}
\lcm\left( \frac{m}{m_1},\ldots,\frac{m}{m_r} \right) = \frac{m}{\gcd(m_1,\ldots,m_r)}.
\end{equation}
Let $d_1,\dots,d_r\in\ZZ\setminus\{0\}$ such that there exist $q_1,\dots,q_r\in\ZZ\setminus\{0\}$, 
$\gcd(q_i,d_i)=1$, for which
\[
\sum_{i=1}^r\frac{q_i}{d_i}\in\ZZ;
\]
then
\begin{equation}\label{eq:gcd-lcm2}
\lcm( d_1, \ldots, d_r ) = \frac{\gcd \left( \left\{ \frac{d_1\cdot\ldots\cdot d_r}{d_i} \mid i = 1,\ldots,r \right\} \right)}
{\gcd \left( \left\{ \frac{d_1\cdot\ldots\cdot d_r}{d_i d_j} \mid i,j = 1,\ldots,r, \ i \neq j \right\} \right)}.
\end{equation}
The two first results are classical; let us proof this last one (the technique can be easily adapted by the reader for the other proofs).
The hypothesis implies that for any $i$, $d_i$ divides the $\lcm$ of all the $d_j$'s, but $d_i$.

Fix a prime number $p$ and consider the valuation $\nu_p$ associated to this prime. Let $n_i:=\nu_p(d_i)$. For the sake of simplicity, let us order the numbers $d_1,\dots,d_r$ such that $n_1\leq\dots\leq n_r$. The condition that $d_r$ divides 
$\lcm(d_1,\dots,d_{r-1})$ implies that $n_r\leq\max(n_1,\dots,n_{r-1})=n_{r-1}\leq n_r$, i.e., $n_r=n_{r-1}$.
The $p$-valuation of the left-hand side of \eqref{eq:gcd-lcm2} equals $\max(n_1,\dots,n_{r})=n_r$. 
The $p$-valuation of the numerator of the right-hand side equals
\[
\min_{1\leq i\leq r}\left((n_1+\dots+n_r)-n_i\right)=
(n_1+\dots+n_r)-\max_{1\leq i\leq r}\left(n_i\right)=n_1+\dots+n_{r-1}.
\]
The $p$-valuation of the denominator of the right-hand side equals
\[
\min_{1\leq i<j\leq r}\left((n_1+\dots+n_r)-n_i-n_j\right)=
(n_1+\dots+n_r)-\max_{1\leq i<j\leq r}\left(n_i+n_j\right)=n_1+\dots+n_{r-2}
\]
and we obtain that both sides have the same $p$-valuation. Since it is valid for all primes,
the equality follows.

\section{Coverings of the projective line}

The following proof has been inspired by the calculations in
\cite[p.~547]{Steenbrink77}.

\begin{lemma}\label{covering_proj_line}
Let $F: Y \to X$ be a cyclic branched covering of $e$ sheets between two orientable compact surfaces.
Denote by $\Delta$ the characteristic polynomial of the monodromy of $F$
acting on the cohomology groups. Assume $Y$ has $r$ connected components. Then,
$$
\Delta_{H^1(Y)}(t) = \frac{(t^r-1)^2 \cdot (t^e-1)^{-\chi(\check{X})}}{\prod_{Q \in \mathcal{R}} \left( t^{\m(Q)}-1 \right)},
$$
where $\mathcal{R}$ is the ramification set of $F$, $\check{X}:= X \setminus \mathcal{R}$, and $\m(Q)$ denotes
the number of preimages of $Q$.
\end{lemma}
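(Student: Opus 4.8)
The plan is to compute $\Delta_{H^1(Y)}(t)$ by an A'Campo-type zeta-function argument, carried out in the representation ring of $G:=\ZZ/e$, which acts on $Y$ with generator $\sigma$ and quotient $X=Y/G$. First I would set $P_i(t):=\det(\mathrm{Id}-t\,\sigma^*\mid H^i(Y;\CC))$ and introduce the additive-to-multiplicative transform $\Phi\colon R(G)\to\CC(t)^\times$, $\Phi([W])=\det(\mathrm{Id}-t\,\sigma\mid W)$, which turns direct sums into products. The key device is the equivariant compactly supported Euler characteristic $\chi^G_c(V)=\sum_i(-1)^i[H^i_c(V;\CC)]\in R(G)$, which is additive along $G$-invariant locally closed decompositions; applied to the compact surface $Y$ it gives $\Phi(\chi^G_c(Y))=P_0(t)P_1(t)^{-1}P_2(t)$, so that $P_1(t)=P_0(t)P_2(t)\,/\,\Phi(\chi^G_c(Y))$. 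Throughout I would use that, since $X$ is connected, $G$ acts transitively on the $r$ components of $Y$ and on each fibre $F^{-1}(Q)$.

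Next I would evaluate the three ingredients. The actions of $\sigma$ on $H^0(Y;\CC)\cong\CC^r$ and (since $\sigma$ is orientation preserving) on $H^2(Y;\CC)\cong\CC^r$ both realise the transitive $\ZZ/e$-action on the $r$-element set $\pi_0(Y)$, i.e. an $r$-cycle, whence $P_0(t)=P_2(t)=1-t^r$ and $P_0(t)P_2(t)=(1-t^r)^2$. For the stratification I decompose $X=\check X\sqcup\mathcal R$ and pull back: over each $Q\in\mathcal R$ the fibre is a single $G$-orbit of $\m(Q)$ points, giving a permutation module with $\Phi=1-t^{\m(Q)}$; over $\check X$ the covering $\check Y:=F^{-1}(\check X)\to\check X$ is unramified, so $G$ acts \emph{freely}, the fibre is the regular representation $\CC[G]$, and multiplicativity of $\chi^G_c$ in this orbit-bundle yields $\chi^G_c(\check Y)=\chi(\check X)\,[\CC[G]]$. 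Since $\Phi([\CC[G]])=\prod_{j=0}^{e-1}(1-\zeta_e^j t)=1-t^e$, additivity of $\chi^G_c$ produces
\[
\Phi(\chi^G_c(Y))=(1-t^e)^{\chi(\check X)}\prod_{Q\in\mathcal R}\bigl(1-t^{\m(Q)}\bigr).
\]

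Combining the two computations gives
\[
P_1(t)=\frac{(1-t^r)^2}{(1-t^e)^{\chi(\check X)}\prod_{Q\in\mathcal R}(1-t^{\m(Q)})},
\]
and rewriting each factor $1-t^a=-(t^a-1)$, then checking that the accumulated signs are absorbed by the monic normalisation of the characteristic polynomial, recovers the stated formula. The routine parts are the two direct eigenvalue computations on $H^0$ and $H^2$. The main obstacle — and the point deserving the most care — is the free-action contribution: establishing the multiplicativity of the equivariant compactly supported Euler characteristic over the orbit-bundle $\check Y\to\check X$, so that the fibrewise class $[\CC[G]]$ may be separated from the base factor $\chi(\check X)$, and verifying $\chi_c=\chi$ on $\check X$. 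A secondary bookkeeping issue is to pin down the sign/normalisation convention so that the $1-t^a$ factors match the $t^a-1$ factors appearing in the statement.
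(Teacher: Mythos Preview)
Your proposal is correct and follows essentially the same A'Campo zeta-function strategy as the paper, though the packaging differs slightly. The paper works more concretely: it first states the zeta identity $\Delta_{H^0(\check{Y})}(t)\,\Delta_{H^1(\check{Y})}(t)^{-1}=(t^e-1)^{\chi(\check{X})}$ for the unramified cover $\check{Y}\to\check{X}$ (exactly your multiplicativity step for the free part), and then passes from $H^1(\check{Y})$ to $H^1(Y)$ via the Mayer--Vietoris exact sequence $0\to H^1(Y)\to H^1(\check{Y})\to H^0(Y\setminus\check{Y})\to H^2(Y)\to 0$, taking alternating products of characteristic polynomials. You instead encode everything at once through additivity of the equivariant compactly supported Euler characteristic along the stratification $Y=\check{Y}\sqcup F^{-1}(\mathcal{R})$, which is the same exactness fact one level more abstractly. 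Both routes identify the $H^0$ and $H^2$ contributions as $(t^r-1)$ and the ramification contribution as $\prod_Q(t^{\mu(Q)}-1)$; the paper's version avoids the representation-ring formalism and the $\chi_c=\chi$ check, while yours avoids writing down the exact sequence explicitly.
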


\begin{proof}
Let us write $\check{Y}:= Y \setminus F^{-1}(\mathcal{R})$. Since $F_|: \check{Y} \to \check{X}$ is a topological covering
of $e$ sheets, one gets that $\Delta_{H^0(\check{Y})}(t) \cdot \Delta^{-1}_{H^1(\check{Y})}(t) = (t^e-1)^{\chi(\check{X})}$.
One the other hand, by virtue of Mayer-Vietoris, there is an exact sequence
$$
0 \longrightarrow H^1(Y) \longrightarrow H^1(\check{Y}) \longrightarrow
H^0(Y \setminus \check{Y}) \longrightarrow H^2(Y) \longrightarrow 0.
$$
Therefore
$$
\Delta_{H^1(Y)}(t) = \frac{\Delta_{H^0(\check{Y})}(t) \cdot (t^e-1)^{-\chi(\check{X})} \cdot \Delta_{H^2(Y)}(t)}
{\Delta_{H^0(Y \setminus \check{Y})}(t)}.
$$

The monodromy on $H^0(\check{Y}) \cong H^2(Y)$ is the rotation of the corresponding $r$ irreducible components.
Hence its characteristic polynomial is $t^r-1$. Finally every point $Q \in \mathcal{R}$ gives the factor
$t^{\m(Q)}-1$ in $\Delta_{H^0(Y \setminus \check{Y})}(t)$.
\end{proof}

\bibliographystyle{amsplain}
\providecommand\noopsort[1]{}
\providecommand{\bysame}{\leavevmode\hbox to3em{\hrulefill}\thinspace}
\providecommand{\MR}{\relax\ifhmode\unskip\space\fi MR }
\providecommand{\MRhref}[2]{%
  \href{http://www.ams.org/mathscinet-getitem?mr=#1}{#2}
}
\providecommand{\href}[2]{#2}

\end{document}